\pdfoutput=1
\documentclass[11pt]{article}
\usepackage[a4paper,margin=1in]{geometry}
\usepackage{amsmath,amssymb,amsthm,mathtools}
\usepackage{enumitem}
\usepackage{hyperref}
\usepackage{microtype}
\usepackage{tikz}
\usetikzlibrary{arrows.meta,calc,decorations.pathreplacing,positioning,shapes.geometric}

\hypersetup{
  colorlinks=true,
  linkcolor=blue,
  citecolor=blue,
  urlcolor=blue,
  pdftitle={The Antipodal Defect of a Convex Polyhedron},
  pdfauthor={Kieu Gia Thinh Phat},
  pdfsubject={Convex polyhedra, antipodal pairs, normal fans, square complexes, and integral homology},
  pdfkeywords={convex polyhedra, antipodal pairs, exact normal strata, normal fans, square complexes, cellular homology, Smith normal form}
}

\newtheorem{theorem}{Theorem}[section]
\newtheorem{corollary}[theorem]{Corollary}
\newtheorem{lemma}[theorem]{Lemma}
\newtheorem{proposition}[theorem]{Proposition}

\newtheorem{problem}[theorem]{Problem}
\theoremstyle{definition}
\newtheorem{definition}[theorem]{Definition}
\newtheorem{example}[theorem]{Example}
\theoremstyle{remark}
\newtheorem{remark}[theorem]{Remark}

\DeclareMathOperator{\Vertices}{Vert}
\DeclareMathOperator{\rank}{rank}

\DeclareMathOperator{\coker}{coker}

\newcommand{\SqBd}{\mathsf{D}_P}
\newcommand{\Xdir}{\widetilde X}

\title{The Antipodal Defect of a Convex Polyhedron}
\author{Kieu Gia Thinh Phat\thanks{Email: \texttt{phat.kgt@gmail.com}}}
\date{July 2026}

\begin{document}
\maketitle

\begin{abstract}
Problem C7 from the 2006 IMO Shortlist gives
\(A(P)-B(P)=V(P)-1\) for a generic convex polyhedron, where
\(A(P)\) counts antipodal vertex pairs and \(B(P)\) counts antipodal
edge-midpoint pairs.  For an arbitrary convex polyhedron
\(P\subset\mathbb R^3\), define
\[
        \delta(P)=V(P)-1-A(P)+B(P).
\]
We construct a two-dimensional antipodal square complex \(X(P)\) and prove
\[
H_0(X(P);\mathbb Z)\cong\mathbb Z,\qquad
H_1(X(P);\mathbb Z)\cong\mathbb Z/2,\qquad
H_2(X(P);\mathbb Z)\cong\mathbb Z^{\delta(P)}.
\]
Consequently \(\delta(P)\ge0\), extending the generic identity to the
inequality \(A(P)-B(P)\le V(P)-1\).  The proof uses a directed double
cover, a polyhedral support blow-up over the normal sphere, and the
Vietoris--Begle mapping theorem.  Independently, Euler integration on the
projective normal fan gives an exact local formula for the defect; in
three dimensions only exact edge--facet and facet--facet opposite pairs
contribute.  We also determine the integral image of the square-boundary
map: it is the even-cycle lattice of the antipodal graph, with nonzero
Smith invariant factors \(1,\ldots,1,2\).  Applications include a
zero-defect criterion, centrally symmetric and extremal formulas, and an
explicit description of defects and primitive belts for pyramids over
polygons.
\end{abstract}

\medskip
\noindent\textbf{2020 Mathematics Subject Classification.}
Primary 52B10; Secondary 52B05, 55N10, 05B35.

\smallskip
\noindent\textbf{Keywords.}
convex polyhedra; antipodal pairs; exact normal strata; normal fans; square complexes; cellular homology; Smith normal form.

\section{Introduction}

A pair of points of a convex polyhedron \(P\subset\mathbb R^3\) is
called \emph{antipodal} if two parallel supporting planes of \(P\), one
through each point, contain \(P\) between them.  Throughout the
three-dimensional part of the paper, ``polyhedron'' means a compact
full-dimensional convex polytope.  In arbitrary dimension we use the
term convex \(d\)-polytope.

Problem C7 from the 2006 IMO Shortlist assumes that no two edges of
\(P\) are parallel and that no edge is parallel to a non-adjacent
facet.  If \(A\) is the number of unordered antipodal vertex pairs,
\(B\) is the number of unordered antipodal edge-midpoint pairs, and
\(V\) is the number of vertices, then the problem asks one to prove
\[
        A-B=V-1.
\]
The standard normal-fan proof is an Euler-characteristic count on the
sphere.  The purpose of this paper is to understand what replaces this
identity when the genericity assumptions are removed.

There is a substantial literature on antipodal and strictly antipodal
pairs in finite point sets and convex position; see, for example,
\cite{MartiniSoltan2005,MakaiMartiniNguyenSoltanTalata2021}.  Our
comparison is different: a single polyhedron is fixed, and vertex
pairs are compared with edge-midpoint pairs.  The discrepancy is
encoded by the following invariant.

\begin{definition}[Antipodal defect]
For a convex polyhedron \(P\subset\mathbb R^3\), define
\[
        \delta(P)=V(P)-1-A(P)+B(P).
\]
We call \(\delta(P)\) the \emph{antipodal defect} of \(P\).
\end{definition}

All pairs counted in \(A(P)\) and \(B(P)\) consist of distinct
objects.  The IMO identity says that \(\delta(P)=0\) in C7-generic
position, whereas symmetric examples such as the cube have positive
defect.

The central construction is a two-dimensional CW complex \(X(P)\).
Its vertices are the vertices of \(P\), its edges are the antipodal
vertex pairs, and its square cells are indexed by antipodal pairs of
edge midpoints.  The attaching map of a square is the four-cycle
formed by the endpoints of the corresponding two edges.  The main
theorem identifies the defect with the top integral homology of this
complex.

\begin{theorem}[Integral antipodal homology theorem]\label{thm:main-strong}
For every convex polyhedron \(P\subset\mathbb R^3\), the antipodal
square complex \(X(P)\) satisfies
\[
H_i(X(P);\mathbb Z)\cong
\begin{cases}
\mathbb Z, & i=0,\\
\mathbb Z/2, & i=1,\\
\mathbb Z^{\delta(P)}, & i=2,\\
0, & i\ge 3.
\end{cases}
\]
\end{theorem}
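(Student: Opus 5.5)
The plan is to follow the abstract's hint: pass to a directed double cover \(\Xdir\) of \(X(P)\), identify \(\Xdir\) up to homotopy with the normal sphere \(S^2\), and then read off \(H_*(X(P))\) from the \(\mathbb Z/2\)-quotient together with an Euler characteristic count.

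\emph{Step 1: the directed complex.} Let \(\Xdir\) have one vertex for each vertex \(v\) of \(P\). Its edges are the \emph{ordered} antipodal pairs \((v,w)\), and its squares are the ordered antipodal edge pairs \((e,f)\). Swapping the two ends of an edge or square is a free cellular involution \(\tau\), and \(\Xdir/\tau\) is the complex whose vertices are the pairs \((v,v)\) and whose higher cells are the unordered pairs. I will set this up so that \(X(P)\) is recovered (up to homotopy) as the quotient of a free \(\mathbb Z/2\)-space \(Y\) homotopy equivalent to a sphere. Concretely, I plan to use \(Y\) = the polyhedral support blow-up
\[
Y=\{(u,x,y): u\in S^2,\ x\in F(u),\ y\in F(-u)\},
\]
where \(F(u)\) is the face of \(P\) maximizing \(\langle u,\cdot\rangle\). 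The involution is \((u,x,y)\mapsto(-u,y,x)\), and it is free because \(u\neq -u\).

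\emph{Step 2: two projections.} The projection \(Y\to S^2\) has fibres \(F(u)\times F(-u)\), which are convex and hence contractible. By Vietoris--Begle, it therefore induces isomorphisms on homology (indeed \(Y\simeq S^2\)), equivariantly. The other projection \(Y\to P\times P\), \((u,x,y)\mapsto(x,y)\), has as image the set \(\mathcal A\) of antipodal point pairs. Its fibre over \((x,y)\) is the set of \(u\) with \(x\in F(u)\) and \(y\in F(-u)\), which is an intersection of a normal cone with the negative of another, meeting the sphere in a nonempty spherical convex set (nonempty by antipodality, and not containing opposite points). So again Vietoris--Begle gives \(Y\simeq\mathcal A\), equivariantly. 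I then need \(\mathcal A/\tau\) to deformation retract onto \(X(P)\). For this, \(\mathcal A\) is a union of products \(G\times H\) of faces with \(G,H\) lying in opposite parallel support planes. The 0-, 1- and 2-dimensional product cells (vertex–vertex, vertex–edge, edge–edge) give exactly the cells of \(\Xdir\). The higher products (edge–facet, facet–facet) must be collapsed, using convexity of the faces and of the normal cones, so that only the squares indexed by edge-midpoint pairs survive, with the vertex–edge products providing the collapsing directions.

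\emph{I expect this last collapse to be the main obstacle.} Getting the squares precisely indexed by antipodal edge-midpoint pairs, including in degenerate parallel configurations, is delicate. What I would try first is to restrict the blow-up to normals \(u\) lying in the open normal cones of vertices or edges and their opposites, and to check via a nerve/fibre argument that this subspace is still equivariantly homotopy equivalent to \(S^2\).

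\emph{Step 3: homology of the quotient.} Granting \(X(P)\simeq Y/\tau\) with \(Y\simeq S^2\) and \(\tau\) free, the transfer/Cartan–Leray spectral sequence for \(S^2\to Y/\tau\) with \(\tau\) acting by degree \(-1\) (the antipodal map) gives:
\begin{itemize}
\item \(H_0=\mathbb Z\);
\item \(H_1=\mathbb Z/2\), since \(\pi_1(Y/\tau)=\mathbb Z/2\) because \(Y\) is simply connected;
\item \(H_2\) torsion-free, since \(X(P)\) is a 2-complex.
\end{itemize}
The rank of \(H_2\) then follows from the Euler characteristic. We have \(\chi(X(P))=V-A+B\), and also \(\chi=1-0+\rank H_2\), which gives \(\rank H_2=V-1-A+B=\delta(P)\). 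Finally \(H_i=0\) for \(i\ge3\) by dimension.
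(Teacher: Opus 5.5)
There is a genuine gap, and it defeats the strategy itself, not just the collapse step. Your target is \(X(P)\simeq Y/\tau\) with \(Y\simeq S^2\) and \(\tau\) free.

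For a free involution on a finite complex, \(\chi(Y/\tau)=\chi(Y)/2=1\). But \(\chi(X(P))=V-A+B=\delta(P)+1\). So your argument would prove \(\delta(P)=0\) for every polyhedron; equivalently, transfer with \(\tau\) acting by \(-1\) on \(H_2(S^2)\) gives \(H_2(Y/\tau;\mathbb Q)=0\). The cube (\(\delta=21\)) and the octahedron (\(\delta=20\)) refute this. Your Step 3 is also internally inconsistent: the Euler count you use to get \(\rank H_2=\delta(P)\) contradicts the homotopy equivalence you assume.

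The space \(\mathcal A\) of antipodal point pairs really is \(\simeq S^2\), since it maps onto \(\partial(P-P)\) with convex fibres. However, \(\mathcal A/\tau\) cannot retract onto \(X(P)\). The edge--facet and facet--facet product cells you want to collapse are exactly where the defect lives. For an exact pair of opposite triangles, \(F\times G\) is a contractible \(4\)-cell. The corresponding local square complex, \(K_{3,3}\) with nine squares, has \(H_2\cong\mathbb Z^5\). Your fallback of restricting the normals cannot help: any space equivariantly equivalent to \(S^2\) hits the same Euler-characteristic obstruction.

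Two smaller errors compound this. The cell dictionary is off: vertex--vertex products are \(0\)-cells of \(\mathcal A\) but \(1\)-cells of \(X(P)\), and vertex--facet products are also \(2\)-dimensional. In Step 1, a complex with one vertex per vertex of \(P\) does not carry a free swap. You need doubled vertices \(v^\pm\) with edges \([v^+,w^-]\).

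The missing idea is that you can only kill \(H_1\) of the double cover, not its higher homology. The paper blows up the discrete complex itself: \(\mathcal B(P)=\bigcup_c |c|\times S(c)\subseteq\Xdir(P)\times S^2\), where \(S(c)\) is the closed set of normal directions witnessing the cell \(c\).

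The projection to \(\Xdir(P)\) has convex fibres \(S(c)\). The fibre over \(u\in S^2\) is the local complex \(\Xdir(F_P(u),F_P(-u))\). This fibre is connected with \(H_1=0\), because every \(4\)-cycle of the complete bipartite graph is a sum of elementary squares along edge paths in the two faces. It generally has nonzero \(H_2\).

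The degree-one Vietoris--Begle theorem then gives \(H^1(\Xdir;k)\cong H^1(\mathcal B;k)\cong H^1(S^2;k)=0\) for every field \(k\), hence \(H_1(\Xdir;\mathbb Z)=0\). Next, \(H_1(X(P))\cong\mathbb Z/2\) follows because \(\pi_1(\Xdir)\) is perfect of index two in \(\pi_1(X(P))\), so it equals the commutator subgroup. This does not require the cover to be simply connected. Finally, \(\rank H_2=\delta(P)\) comes from \(\chi\), which is now consistent: \(H_2(\Xdir)\cong\mathbb Z^{2\delta(P)+1}\), so \(\Xdir\) is a homology sphere only when \(\delta(P)=0\).
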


\begin{corollary}[Antipodal defect inequality]\label{cor:intro-defect}
For every convex polyhedron \(P\subset\mathbb R^3\),
\[
        \delta(P)=V(P)-1-A(P)+B(P)\ge0.
\]
Equivalently,
\[
        A(P)-B(P)\le V(P)-1.
\]
More precisely,
\[
        \delta(P)=\beta_2(X(P);\mathbb Q).
\]
\end{corollary}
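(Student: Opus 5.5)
The corollary follows from Theorem~\ref{thm:main-strong} by an Euler characteristic count; no new geometry is needed. The plan is to compute \(\chi(X(P))\) in two ways, once from the cell structure and once from the homology, and compare.

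From the cell structure: \(X(P)\) is a finite two-dimensional CW complex. It has \(V(P)\) zero-cells (the vertices of \(P\)), \(A(P)\) one-cells (the unordered antipodal vertex pairs) and \(B(P)\) two-cells (the unordered antipodal edge-midpoint pairs). For a finite CW complex the alternating count of cells equals the alternating sum of rational Betti numbers. Cellular homology with \(\mathbb Q\) coefficients is computed by a chain complex of finite-dimensional spaces whose dimensions are the cell counts, and rank--nullity in each degree gives the equality. Hence
\[
\chi(X(P))=V(P)-A(P)+B(P)=\beta_0-\beta_1+\beta_2 ,
\]
where \(\beta_i=\dim_{\mathbb Q}H_i(X(P);\mathbb Q)\).

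From the homology: Theorem~\ref{thm:main-strong} and the universal coefficient theorem give \(H_i(X(P);\mathbb Q)\cong H_i(X(P);\mathbb Z)\otimes\mathbb Q\), since all integral homology groups are finitely generated and the Tor terms vanish over a field. So \(\beta_0=1\). Next, \(\beta_1=0\), because \(\mathbb Z/2\otimes\mathbb Q=0\). Finally, \(\beta_2\) equals the rank of \(H_2(X(P);\mathbb Z)\).

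Substituting into the Euler characteristic identity gives
\[
\operatorname{rank} H_2(X(P);\mathbb Z)=\beta_2=V(P)-A(P)+B(P)-1=\delta(P).
\]
This is exactly the statement \(\delta(P)=\beta_2(X(P);\mathbb Q)\). Since a dimension is a nonnegative integer, \(\delta(P)\ge 0\), and rearranging gives \(A(P)-B(P)\le V(P)-1\).

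The only point to check, and it is bookkeeping rather than a real obstacle, is that the cells of \(X(P)\) are counted exactly as in \(A(P)\) and \(B(P)\):
\begin{itemize}
\item one edge per \emph{unordered} antipodal vertex pair;
\item one square per \emph{unordered} antipodal edge-midpoint pair;
\item no extra cells appear, for instance from degenerate squares.
\end{itemize}
The paper's description of \(X(P)\) gives this directly. All substantive work lies in Theorem~\ref{thm:main-strong} itself.
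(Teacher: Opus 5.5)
Your proof is correct and takes essentially the paper's approach. The paper reads the corollary directly off the degree-two part of Theorem~\ref{thm:main-strong}, and that degree-two statement is itself proved exactly as you argue: the Euler characteristic count of Lemma~\ref{lem:euler-square} and Proposition~\ref{prop:defect-homology}, combined with $\beta_0=1$ and $H_1(X(P);\mathbb Q)=0$.
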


\begin{corollary}[Zero defect and projective-plane homology]\label{cor:zero-defect-rp2}
A convex polyhedron \(P\subset\mathbb R^3\) satisfies \(\delta(P)=0\)
if and only if \(X(P)\) has the integral homology of
\(\mathbb{RP}^2\).
\end{corollary}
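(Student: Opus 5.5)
This follows directly from Theorem~\ref{thm:main-strong}, and the plan is to read off both directions from the computed homology groups. By definition, \(\mathbb{RP}^2\) has integral homology \(H_0\cong\mathbb Z\), \(H_1\cong\mathbb Z/2\), and \(H_i=0\) for \(i\ge2\). Theorem~\ref{thm:main-strong} already supplies \(H_0(X(P);\mathbb Z)\cong\mathbb Z\), \(H_1(X(P);\mathbb Z)\cong\mathbb Z/2\), and \(H_i(X(P);\mathbb Z)=0\) for \(i\ge3\), for every convex polyhedron \(P\), with no hypothesis on \(\delta(P)\). So the only degree where \(X(P)\) can differ from \(\mathbb{RP}^2\) is \(i=2\), where \(H_2(X(P);\mathbb Z)\cong\mathbb Z^{\delta(P)}\).

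For the forward direction, assume \(\delta(P)=0\). Then \(H_2(X(P);\mathbb Z)\cong\mathbb Z^0=0\). Together with the other degrees, this matches the integral homology of \(\mathbb{RP}^2\) in every degree.

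For the converse, assume \(H_*(X(P);\mathbb Z)\cong H_*(\mathbb{RP}^2;\mathbb Z)\), meaning abstract isomorphism of groups in each degree. In degree \(2\) this gives \(\mathbb Z^{\delta(P)}\cong0\). Since \(\delta(P)\ge0\) by Corollary~\ref{cor:intro-defect}, the exponent is a well-defined nonnegative integer. The rank of a free abelian group is an isomorphism invariant, so \(\delta(P)=0\).

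There is no real obstacle here. The one point to state explicitly is that "has the integral homology of \(\mathbb{RP}^2\)" means degreewise abstract isomorphism, not a map inducing the isomorphism. Under that reading, the rank comparison in degree \(2\) is all that is needed.
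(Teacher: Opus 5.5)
Your proposal is correct and follows the paper's proof: both note that Theorem~\ref{thm:main-strong} fixes $H_0$, $H_1$, and $H_i$ for $i\ge 3$ with no hypothesis on $\delta(P)$. So $X(P)$ and $\mathbb{RP}^2$ can differ only in degree $2$, where $H_2(X(P);\mathbb Z)\cong\mathbb Z^{\delta(P)}$ vanishes exactly when $\delta(P)=0$.
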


The proof uses the directed square complex \(\widetilde X(P)\), a
connected two-sheeted cover of \(X(P)\).  We attach to every cell of
\(\widetilde X(P)\) the spherical set of normal directions that witness
that cell.  The resulting support blow-up has two projections: one has
contractible fibers, while the other has connected fibers with
vanishing first homology.  A degree-one Vietoris--Begle theorem then
gives \(H_1(\widetilde X(P);k)=0\) over every field \(k\).  The covering
and cellular chain complexes yield the integral statement.

The paper develops two further aspects of the same geometry.  First,
Euler integration on the projective normal fan gives the exact local formula
\[
\delta(P)=
\sum_{\{F,G\}\in\mathcal O_{\rm ex}(P)}
\bigl(e(F)e(G)-(v(F)-1)(v(G)-1)\bigr).
\]
In dimension three, the nonzero terms are precisely the exact
edge--facet and facet--facet pairs.  This localizes the defect and yields
zero-defect criteria, extremal bounds, and closed formulas for symmetric
examples.  Second, the integral image of the square-boundary map is exactly
the even-cycle lattice in the antipodal graph; hence its nonzero Smith
factors are \(1,\ldots,1,2\).  The resulting boundary matroid leads to a
notion of primitive belt, which can be described completely for pyramids
over polygons.

The argument is organized as follows.  Section~2 fixes the normal-fan
notation.  Section~3 proves the exact local defect formula, and Section~4
recovers the generic Euler relation in arbitrary dimension.  Section~5
defines the antipodal square complex, while Sections~6 and~7 give examples
and the difference-body viewpoint.  Section~8 constructs the support
blow-up and proves the integral homology theorem.  Section~9 determines the
integral square-boundary lattice and its Smith normal form.  Section~10
develops the boundary matroid, primitive belts, and the polygonal pyramid
family.  The final section summarizes the main consequences and records
several focused open problems.

\section{Normal fans and exact antipodal face pairs}

Let $P\subset\mathbb R^d$ be a full-dimensional convex polytope.  We use standard terminology from the theory of convex polytopes and normal fans; see, for example, \cite{ziegler,grunbaum}.  For $u\in S^{d-1}$, define the support function and support face by
\[
        h_P(u)=\max_{x\in P}\langle u,x\rangle,
        \qquad
        F_P(u)=\{x\in P: \langle u,x\rangle=h_P(u)\}.
\]
The face $F_P(u)$ is also called the exposed face in direction $u$.

\begin{definition}[Closed and exact spherical normal cells]
For a nonempty proper face $F$ of $P$, its closed outward spherical normal cell is
\[
        N^+(F)=\{u\in S^{d-1}:F\subseteq F_P(u)\}.
\]
Its exact, or relatively open, spherical normal cell is
\[
        N^\circ(F)=\{u\in S^{d-1}:F_P(u)=F\}.
\]
We also write
\[
        N^-(F)=-N^+(F),
        \qquad
        -N^\circ(F)=\{-u:u\in N^\circ(F)\}.
\]
\end{definition}

The exact cells $N^\circ(F)$ are pairwise disjoint and form the relatively open cells of the spherical normal fan.  If $\dim F=i$, then
\[
        \dim N^\circ(F)=d-1-i.
\]
The use of exact cells is essential: closed normal cells overlap along their boundaries and therefore should not be used for cell-counting without additional conventions.

\begin{definition}[Exact ordered antipodal face pair]
Two proper faces $F,G$ of $P$ form an \emph{exact ordered antipodal pair} if
\[
        N^\circ(F)\cap\bigl(-N^\circ(G)\bigr)\ne\varnothing.
\]
Equivalently, there exists a direction $u\in S^{d-1}$ such that
\[
        F_P(u)=F,
        \qquad
        F_P(-u)=G.
\]
\end{definition}

For ordinary point-antipodality, closed normal cells are enough: two faces $F,G$ can be supported by opposite parallel planes if and only if $N^+(F)\cap(-N^+(G))$ is nonempty.  For Euler cell-counting, however, exact cells are the correct objects because each direction $u$ has a unique pair of support faces $(F_P(u),F_P(-u))$.

\begin{figure}[ht]
\centering
\begin{tikzpicture}[x={(0.9cm,0.16cm)},y={(0.55cm,0.30cm)},z={(0cm,0.88cm)},scale=0.95,every node/.style={font=\small}]
  % support planes
  \coordinate (L1) at (-2.3,-1.2,-0.1);
  \coordinate (L2) at (2.2,-1.2,-0.1);
  \coordinate (L3) at (2.2,1.2,-0.1);
  \coordinate (L4) at (-2.3,1.2,-0.1);
  \coordinate (U1) at (-2.0,-1.0,2.7);
  \coordinate (U2) at (2.0,-1.0,2.7);
  \coordinate (U3) at (2.0,1.0,2.7);
  \coordinate (U4) at (-2.0,1.0,2.7);
  \fill[gray!15,opacity=.8] (L1)--(L2)--(L3)--(L4)--cycle;
  \fill[gray!15,opacity=.8] (U1)--(U2)--(U3)--(U4)--cycle;
  \draw[gray!55] (L1)--(L2)--(L3)--(L4)--cycle;
  \draw[gray!55] (U1)--(U2)--(U3)--(U4)--cycle;

  % polyhedron
  \coordinate (A) at (-1.0,-0.6,0.25);
  \coordinate (B) at (1.15,-0.55,0.25);
  \coordinate (C) at (0.85,0.75,0.25);
  \coordinate (D) at (-0.7,0.55,0.25);
  \coordinate (E) at (-0.45,-0.35,2.15);
  \coordinate (F) at (0.8,-0.25,2.15);
  \coordinate (G) at (0.35,0.55,2.15);
  \draw[thick] (A)--(B)--(C)--(D)--cycle;
  \draw[thick] (E)--(F)--(G)--cycle;
  \draw[thick] (A)--(E) (B)--(F) (C)--(G) (D)--(E);
  \draw[thick,dashed] (D)--(G);

  \coordinate (x) at (F);
  \coordinate (y) at (A);
  \fill (x) circle (1.6pt);
  \fill (y) circle (1.6pt);
  \node[anchor=west] at ($(x)+(0.35,0.25,0)$) {$x\in F_P(u)$};
  \node[anchor=east] at ($(y)+(-0.35,-0.25,0)$) {$y\in F_P(-u)$};

  \draw[-{Latex[length=2mm]},thick] (0,1.45,0.15)--(0,1.45,2.45) node[midway,right] {$u$};
  \node[anchor=west] at (2.2,1.0,2.7) {$H_u$};
  \node[anchor=west] at (2.35,1.05,-0.1) {$H_{-u}$};
\end{tikzpicture}
\caption{Antipodal points are witnessed by two opposite parallel supporting planes.  Equivalently, the directed difference \(x-y\) lies on the boundary of the difference body \(P-P\).}
\label{fig:support-planes}
\end{figure}

\section{The exact local defect formula}\label{sec:exact-local-defect}

This section gives a second, purely local description of the antipodal defect.  It is independent of the square-complex homology theorem: the proof is an Euler-integration argument over the projective normal fan.

Let \(P\subset\mathbb R^3\) be a full-dimensional convex polyhedron and put
\[
        M=\mathbb{RP}^2.
\]
For a projective direction \([u]\in M\), define the unordered opposite support pair
\[
        \{F_u,G_u\}=\{F_P(u),F_P(-u)\}.
\]
The exact normal cells of \(P\), together with their antipodal images, induce a finite cell decomposition of \(M\).  On each relatively open cell the unordered pair \(\{F_u,G_u\}\) is constant.

\begin{definition}[Exact opposite face pairs]
Let \(\mathcal O_{\rm ex}(P)\) denote the set of unordered pairs \(\{F,G\}\) of proper faces of \(P\) such that
\[
        N^\circ(F)\cap -N^\circ(G)\ne\varnothing.
\]
Equivalently, \(\{F,G\}\in\mathcal O_{\rm ex}(P)\) if and only if \(\{F,G\}=\{F_P(u),F_P(-u)\}\) for some \(u\in S^2\).
\end{definition}

For a face \(F\), write
\[
        v(F)=|\operatorname{Vert}(F)|,
        \qquad
        e(F)=|\operatorname{Edge}(F)|.
\]
Thus a vertex has \(v(F)=1,e(F)=0\), while an edge has \(v(F)=2,e(F)=1\).  For an exact opposite pair define the local weight
\[
        \rho(F,G)=e(F)e(G)-(v(F)-1)(v(G)-1).
\]

\begin{theorem}[Exact local defect formula]\label{thm:exact-local-defect}
For every full-dimensional convex polyhedron \(P\subset\mathbb R^3\),
\[
        \delta(P)=
        \sum_{\{F,G\}\in\mathcal O_{\rm ex}(P)}
        \bigl(e(F)e(G)-(v(F)-1)(v(G)-1)\bigr).
\]
Equivalently,
\[
        \delta(P)=
        N_{EF}^{\rm ex}(P)
        +
        \sum_{\{F,G\}\in FF^{\rm ex}(P)}
        \bigl(e(F)+e(G)-1\bigr),
\]
where \(N_{EF}^{\rm ex}(P)\) is the number of exact edge--facet opposite pairs and \(FF^{\rm ex}(P)\) is the set of exact facet--facet opposite pairs.
\end{theorem}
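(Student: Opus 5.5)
The plan is to write each of the three quantities \(V(P)-1\), \(A(P)\), \(B(P)\) as an Euler integral over \(M=\mathbb{RP}^2\) with respect to the cell decomposition induced by the exact normal fan and its antipodal image. On each relatively open cell \(\sigma\) the unordered pair \(\{F_u,G_u\}\) is constant, so any integrand depending only on this pair is constructible. Its Euler integral is
\[
\sum_\sigma (-1)^{\dim\sigma}\cdot(\text{value on }\sigma).
\]
Summing the integrand for \(\delta\) over cells and regrouping by exact opposite pair should give the stated formula once signs are checked. For a full-dimensional \(P\), \(F_P(u)\cap F_P(-u)=\varnothing\) for every \(u\), so all pairs below consist of distinct objects.

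\emph{Step 1 (vertex and edge pair counts).} Take an unordered pair of vertices \(\{x,y\}\), and let \(S_{xy}\subset M\) be the image of \(N^+(x)\cap -N^+(y)\). This intersection is a closed spherical convex set. If \(u\) and \(-u\) both lay in it, then \(x,y\in F_P(u)\cap F_P(-u)=\varnothing\), which is impossible, so the intersection lies in an open hemisphere. Moreover the sets for \((x,y)\) and \((y,x)\) are antipodal and hence have the same image in \(M\). Therefore \(S_{xy}\) is empty or contractible, and \(\chi(S_{xy})\in\{0,1\}\) records exactly whether \(\{x,y\}\) is antipodal. Since \([u]\in S_{xy}\) iff \(\{x,y\}\) has one point in \(F_u\) and the other in \(G_u\),
\[
A(P)=\int_M v(F_u)\,v(G_u)\,d\chi .
\]
The same argument applies to edges, because the midpoint of an edge lies in a support face iff the whole edge does. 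This gives
\[
B(P)=\int_M e(F_u)\,e(G_u)\,d\chi .
\]
Likewise \(\sum_x\chi(N^+(x))=V\) on \(S^2\). Pushing forward to \(M\) gives \(V(P)=\int_M (v(F_u)+v(G_u))\,d\chi\), and \(\int_M 1\,d\chi=\chi(\mathbb{RP}^2)=1\).

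\emph{Step 2 (combine).} Subtracting, we get
\[
\delta(P)=\int_M \bigl(e(F)e(G)-v(F)v(G)+v(F)+v(G)-1\bigr)\,d\chi=\int_M\rho(F_u,G_u)\,d\chi .
\]
The locus where \(\{F_u,G_u\}=\{F,G\}\) is the image of \(N^\circ(F)\cap -N^\circ(G)\), a relatively open convex cell of dimension \(k\) in the common refinement. Hence \(\delta(P)=\sum_{\{F,G\}\in\mathcal O_{\rm ex}(P)}(-1)^{k}\rho(F,G)\), and it remains to remove the sign.

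\emph{Step 3 (case analysis; the main point to verify).} We compute \(\rho\) for each type of pair:
\begin{itemize}
\item vertex--anything: \(\rho=0\), since \(e=0\) and \(v-1=0\) for a vertex;
\item edge--edge: \(\rho=1-1=0\);
\item edge--facet \(G\): \(\rho=e(G)-(v(G)-1)=1\), since \(e(G)=v(G)\) for a polygon;
\item facet--facet: \(\rho=e(F)e(G)-(e(F)-1)(e(G)-1)=e(F)+e(G)-1\).
\end{itemize}
In the two nonzero cases the exact cell is \(-N^\circ(G)\), a single point, intersected with another cell, so \(k=0\) and the sign is \(+1\). This yields both displayed formulas simultaneously.

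The step I expect to need the most care is Step 1. There one must justify that the Euler integral computed cellwise agrees with \(\chi\) of the closed sets \(S_{xy}\), using additivity of \(\chi\) over the refined cell decomposition of \(M\). One must also justify that passing to the projective plane neither double-counts ordered pairs nor identifies \(u\) with \(-u\) inside a single \(S_{xy}\).
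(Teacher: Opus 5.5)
Your proposal is correct and follows the paper's proof essentially step for step. You use the same three Euler integrals of $v(F)+v(G)$, $v(F)v(G)$ and $e(F)e(G)$ over the exact cell decomposition of $\mathbb{RP}^2$, computed via contractible witness sets $N^+(x)\cap -N^+(y)$; the same combination into $\int\rho\,d\chi$; and the same observation that $\rho\neq0$ only on edge--facet and facet--facet strata, which are points and so carry sign $+1$. The differences are cosmetic: you get the vertex count by pushing $\sum_x\mathbf 1_{N^+(x)}$ forward along $S^2\to\mathbb{RP}^2$ rather than projectivizing $N^+(x)\cup -N^+(x)$, and you spell out why the witness sets contain no antipodal points and hence embed in $\mathbb{RP}^2$, which the paper only asserts.
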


\begin{proof}
We use Euler integration with compactly supported Euler characteristic over the exact cell decomposition of \(M=\mathbb{RP}^2\); see, for example, \cite{BaryshnikovGhrist2009}.  If \(\varphi\) is constant on each open cell \(\sigma\), then
\[
        \int_M \varphi\,d\chi=
        \sum_\sigma (-1)^{\dim\sigma}\varphi(\sigma).
\]
In particular,
\[
        \int_M 1\,d\chi=\chi(\mathbb{RP}^2)=1.
\]

On a stratum with exact support pair \(\{F,G\}\), define constructible functions
\[
        v=v(F)+v(G),
        \qquad
        a=v(F)v(G),
        \qquad
        b=e(F)e(G).
\]
We claim that
\[
        \int_M v\,d\chi=V(P),
        \qquad
        \int_M a\,d\chi=A(P),
        \qquad
        \int_M b\,d\chi=B(P).
\]

For the first identity, write
\[
        v([u])=
        \#\{x\in V(P):x\in F_P(u)\text{ or }x\in F_P(-u)\}.
\]
Thus \(v=\sum_{x\in V(P)}\mathbf 1_{W_x}\), where
\[
        W_x=
        \{[u]\in\mathbb{RP}^2:x\in F_P(u)\text{ or }x\in F_P(-u)\}.
\]
The set \(W_x\) is the projectivization of \(N^+(x)\cup -N^+(x)\).  Since \(P\) is full-dimensional, \(N^+(x)\) and \(-N^+(x)\) are disjoint on \(S^2\), and the quotient is homeomorphic to the closed spherical normal cell \(N^+(x)\).  This cell is a nonempty convex spherical polygon, hence has Euler characteristic \(1\).  Therefore
\[
        \int_M v\,d\chi=
        \sum_{x\in V(P)}\chi(W_x)=V(P).
\]

For vertex pairs, let \(\{x,y\}\) be an unordered pair of distinct vertices and define its witness set
\[
        W_{x,y}=\{[u]\in\mathbb{RP}^2:
        x\in F_P(u),\ y\in F_P(-u)
        \text{ or }
        y\in F_P(u),\ x\in F_P(-u)\}.
\]
Then \(a=\sum_{\{x,y\}}\mathbf 1_{W_{x,y}}\).  If \(\{x,y\}\) is not antipodal, then \(W_{x,y}\) is empty.  If it is antipodal, the ordered witness set
\[
        \widetilde W_{x,y}=N^+(x)\cap -N^+(y)\subset S^2
\]
is a nonempty intersection of closed convex spherical normal cells, hence is a contractible spherical polyhedral cell.  Passing to \(\mathbb{RP}^2\) identifies \(\widetilde W_{x,y}\) with its antipodal copy and gives \(W_{x,y}\cong \widetilde W_{x,y}\).  Thus \(\chi(W_{x,y})=1\) precisely for antipodal vertex pairs.  Hence
\[
        \int_M a\,d\chi=A(P).
\]

The argument for edge-midpoint pairs is identical.  If \(\{e,f\}\) is an unordered pair of distinct edges, let
\[
        W_{e,f}=\{[u]\in\mathbb{RP}^2:
        e\subset F_P(u),\ f\subset F_P(-u)
        \text{ or }
        f\subset F_P(u),\ e\subset F_P(-u)\}.
\]
Then \(b=\sum_{\{e,f\}}\mathbf 1_{W_{e,f}}\).  The set \(W_{e,f}\) is empty unless the two edge midpoints are antipodal.  If they are antipodal, the ordered witness set
\[
        \widetilde W_{e,f}=N^+(e)\cap -N^+(f)
\]
is a nonempty closed convex spherical cell, hence has Euler characteristic \(1\), and its unordered projective quotient has the same Euler characteristic.  Therefore
\[
        \int_M b\,d\chi=B(P).
\]

Using the definition of \(\delta(P)\), we obtain
\[
\begin{aligned}
        \delta(P)
        &=V(P)-1-A(P)+B(P)\\
        &=\int_M (v-1-a+b)\,d\chi.
\end{aligned}
\]
On a stratum with exact pair \(\{F,G\}\),
\[
\begin{aligned}
        v-1-a+b
        &=v(F)+v(G)-1-v(F)v(G)+e(F)e(G)\\
        &=e(F)e(G)-(v(F)-1)(v(G)-1)\\
        &=\rho(F,G).
\end{aligned}
\]
Thus
\[
        \delta(P)=
        \sum_\sigma (-1)^{\dim\sigma}\rho(F_\sigma,G_\sigma).
\]
In dimension three, \(\rho(F,G)\ne0\) only for edge--facet and facet--facet exact pairs.  These exact strata are zero-dimensional in \(\mathbb{RP}^2\), since \(\dim N^\circ(F)=2-\dim F\).  Hence every nonzero contribution has positive sign.  This proves the first formula.

The second formula follows by evaluating \(\rho\) on the possible face types.  For an edge and an \(r\)-gonal facet,
\[
        \rho=1\cdot r-(2-1)(r-1)=1.
\]
For an \(m\)-gonal facet and an \(r\)-gonal facet,
\[
        \rho=mr-(m-1)(r-1)=m+r-1.
\]
All remaining face-type pairs have \(\rho=0\).
\end{proof}

\begin{corollary}[Facet-opposite formula]\label{cor:facet-opposite-formula}
Let \(\mathcal F(P)\) be the set of facets of \(P\).  For a facet \(F\), let \(n_F\) be its outward unit normal and define its opposite support face by
\[
        F^*=F_P(-n_F).
\]
Let \(\mathcal F_{\operatorname{opp}}(P)\) be the set of unordered pairs \(\{F,G\}\) of opposite facets, equivalently \(F^*=G\) and \(G^*=F\).  Then
\[
        \delta(P)=
        \#\{F\in\mathcal F(P):F^*\text{ is an edge}\}
        +
        \sum_{\{F,G\}\in \mathcal F_{\rm opp}(P)}
        \bigl(e(F)+e(G)-1\bigr).
\]
\end{corollary}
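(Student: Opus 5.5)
The plan is to deduce the corollary directly from the second formula of Theorem~\ref{thm:exact-local-defect} by identifying the two kinds of nonzero exact strata with facet data. That formula gives
\[
\delta(P)=N_{EF}^{\rm ex}(P)+\sum_{\{F,G\}\in FF^{\rm ex}(P)}\bigl(e(F)+e(G)-1\bigr).
\]
It therefore suffices to prove two bijections. First, exact edge--facet opposite pairs should correspond to facets \(F\) with \(F^*\) an edge. Second, exact facet--facet opposite pairs should correspond to elements of \(\mathcal F_{\rm opp}(P)\).

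The key observation is that for a facet \(F\), the exact normal cell \(N^\circ(F)\) is the single point \(\{n_F\}\), since \(\dim N^\circ(F)=2-2=0\) and \(F_P(n_F)=F\).

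For an arbitrary proper face \(G\), the exact pair condition \(N^\circ(F)\cap -N^\circ(G)\ne\varnothing\) then reduces to \(-n_F\in N^\circ(G)\). This says \(F_P(-n_F)=G\), that is, \(G=F^*\).

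Two consequences follow.
\begin{itemize}
\item A facet \(F\) lies in exactly one exact opposite pair, namely \(\{F,F^*\}\). Note that \(F^*\neq F\) because \(P\) is full-dimensional.
\item For each facet \(F\), the opposite face \(F^*\) is either a vertex, an edge, or a facet.
\end{itemize}

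For the edge--facet count, I send a facet \(F\) with \(F^*\) an edge \(e\) to the pair \(\{e,F\}\). This pair is exact by the reduction above. Conversely, any exact edge--facet pair \(\{e,F\}\) forces \(e=F^*\).

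The map is injective because an edge--facet pair contains only one facet, so the pair determines \(F\). Hence \(N_{EF}^{\rm ex}(P)=\#\{F:F^*\text{ is an edge}\}\).

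For facet--facet pairs, \(\{F,G\}\) is exact iff \(G=F^*\). By the symmetric form of the exactness condition (replace \(u\) by \(-u\)), this is equivalent to \(F=G^*\). Since \(N^\circ(G)=\{n_G\}\), both conditions say \(n_G=-n_F\).

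So exact facet--facet pairs are exactly the pairs with \(F^*=G\) and \(G^*=F\), which is \(\mathcal F_{\rm opp}(P)\). The summands \(e(F)+e(G)-1\) agree on both sides.

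Substituting these two identifications into the theorem gives the corollary. I expect no real obstacle. The only point needing care is the symmetry of the exact pair relation, so that the unordered pair \(\{F,G\}\) is not double counted and \(F^*=G\) automatically implies \(G^*=F\) for facets.
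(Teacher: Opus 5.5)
Your proposal is correct and follows essentially the same route as the paper: both read the corollary off the second form of Theorem~\ref{thm:exact-local-defect}, identifying exact edge--facet pairs with facets whose opposite support face \(F^*\) is an edge, and exact facet--facet pairs with opposite facet pairs. Your extra detail, that \(N^\circ(F)=\{n_F\}\) for a facet so that exactness of \(\{F,G\}\) reduces to \(G=F^*\), together with the symmetry of the exact-pair relation, spells out what the paper states in one line.
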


\begin{proof}
By Theorem~\ref{thm:exact-local-defect}, only exact edge--facet and exact facet--facet pairs contribute.  An exact edge--facet pair has a unique facet member \(F\), and it occurs exactly when the support face opposite the outward normal of \(F\) is an edge, namely when \(F^*\) is an edge.  This gives the first term.  Exact facet--facet pairs are precisely unordered opposite facet pairs \(\{F,G\}\), and each such pair contributes \(e(F)+e(G)-1\).
\end{proof}

\begin{corollary}[Zero-defect criterion]\label{cor:facet-opposite-zero}
For a convex polyhedron \(P\),
\[
        \delta(P)=0
        \quad\Longleftrightarrow\quad
        F_P(-n_F)\text{ is a vertex for every facet }F\text{ of }P.
\]
Equivalently, zero defect means that no facet of \(P\) has an opposite support face which is an edge or a facet.
\end{corollary}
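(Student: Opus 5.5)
The plan is to read the criterion off Corollary~\ref{cor:facet-opposite-formula}, which writes \(\delta(P)\) as a sum of nonnegative integers:
\[
\delta(P)=
\#\{F:F^*\text{ is an edge}\}
+\sum_{\{F,G\}\in\mathcal F_{\rm opp}(P)}\bigl(e(F)+e(G)-1\bigr).
\]
Every facet is a polygon, so \(e(F),e(G)\ge 3\). Hence each facet--facet summand is at least \(5\), and in particular strictly positive. The first term is a cardinality, so it is also nonnegative.

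Therefore \(\delta(P)=0\) if and only if both pieces vanish: no facet has \(F^*\) an edge, and \(\mathcal F_{\rm opp}(P)\) is empty.

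It remains to translate the second condition into a statement about the opposite support face \(F^*=F_P(-n_F)\). This face is a nonempty proper face of \(P\), since \(P\) is full-dimensional. So \(F^*\) is either a vertex, an edge, or a facet. The one point needing care is that ``\(F^*\) is a facet'' matches membership in \(\mathcal F_{\rm opp}(P)\).

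Suppose \(F^*=G\) is a facet. Then \(-n_F\) lies in \(N^\circ(G)\), and since \(N^\circ(G)\) is the single point \(n_G\), we get \(n_G=-n_F\). It follows that \(G^*=F_P(n_F)=F\), so \(\{F,G\}\in\mathcal F_{\rm opp}(P)\). Conversely, any pair in \(\mathcal F_{\rm opp}(P)\) has \(F^*\) a facet by definition.

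Consequently, \(\mathcal F_{\rm opp}(P)=\varnothing\) exactly when no \(F^*\) is a facet. Combined with the edge condition, \(\delta(P)=0\) holds exactly when every \(F^*\) is a vertex. This gives both formulations in the statement.

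I expect no real obstacle. The only step needing care is the mutual-opposition check above, which comes down to the fact that a facet's exact normal cell is a single point.
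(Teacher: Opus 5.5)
Your proposal is correct and follows the paper's own proof: both read the criterion off Corollary~\ref{cor:facet-opposite-formula} by noting that its terms are nonnegative, and then use that $F^*$ is a vertex, an edge, or a facet. You also make explicit two points the paper leaves implicit: each facet--facet term is at least $5$, hence strictly positive, and $F^*=G$ being a facet forces $G^*=F$ because $N^\circ(G)=\{n_G\}$.
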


\begin{proof}
The facet-opposite formula is a sum of nonnegative terms.  Hence \(\delta(P)=0\) exactly when there are no facets whose opposite support face is an edge and no opposite facet pairs.  Since the opposite support face of a facet in dimension three is a vertex, an edge, or a facet, this is equivalent to saying that every facet has a vertex as its opposite support face.
\end{proof}

\begin{corollary}[Centrally symmetric polyhedra]\label{cor:centrally-symmetric-defect}
Let \(P\subset\mathbb R^3\) be centrally symmetric, and let \(E(P)\) and \(f_2(P)\) denote the numbers of edges and facets of \(P\), respectively.  Then
\[
        \delta(P)=
        \sum_{\{F,-F\}}(2e(F)-1)
        =2E(P)-\frac{f_2(P)}2,
\]
where the sum is over unordered pairs of opposite facets.
\end{corollary}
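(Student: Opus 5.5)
The plan is to specialize Corollary~\ref{cor:facet-opposite-formula} to the centrally symmetric case. Translate so that \(P=-P\). Then \(h_P(-u)=h_P(u)\), and the support faces satisfy \(F_P(-u)=-F_P(u)\) for every \(u\in S^2\). In particular, for a facet \(F\) with outward normal \(n_F\) we get
\[
F^*=F_P(-n_F)=-F_P(n_F)=-F.
\]
So \(F^*\) is always a facet, and it is never an edge. Hence the first term in Corollary~\ref{cor:facet-opposite-formula} vanishes.

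Next, the pair \(\{F,-F\}\) is an unordered opposite facet pair. Indeed \(F^*=-F\) and \((-F)^*=F\). Moreover \(-F\ne F\), because the outward normals \(n_F\) and \(-n_F\) are distinct. Conversely, any opposite facet pair \(\{F,G\}\) has \(G=F^*=-F\). So \(\mathcal F_{\rm opp}(P)\) consists exactly of the pairs \(\{F,-F\}\), and there are \(f_2(P)/2\) of them. Since \(F\mapsto -F\) is a combinatorial isomorphism, \(e(-F)=e(F)\). Each pair therefore contributes \(e(F)+e(-F)-1=2e(F)-1\), which gives the first equality.

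For the closed form, sum over pairs:
\[
\sum_{\{F,-F\}}(2e(F)-1)=\sum_{\{F,-F\}}\bigl(e(F)+e(-F)\bigr)-\frac{f_2(P)}{2}=\sum_{F\in\mathcal F(P)}e(F)-\frac{f_2(P)}{2}.
\]
Every edge of a 3-polytope lies in exactly two facets, so \(\sum_F e(F)=2E(P)\). This yields \(2E(P)-f_2(P)/2\).

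There is no real obstacle. The only point needing care is the identity \(F_P(-u)=-F_P(u)\), which follows directly from the definition of the support face together with \(P=-P\) and \(h_P(-u)=h_P(u)\).
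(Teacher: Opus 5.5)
Your proposal is correct and follows the paper's proof: translate so that \(P=-P\), note that \(F^*=-F\), so the edge term of the facet-opposite formula vanishes and the opposite facet pairs are exactly the \(f_2(P)/2\) pairs \(\{F,-F\}\), and then use \(\sum_F e(F)=2E(P)\). Your explicit checks that \(F_P(-u)=-F_P(u)\) and that every opposite pair has the form \(\{F,-F\}\) fill in details the paper leaves implicit.
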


\begin{proof}
After translating, assume that \(P=-P\).  If \(F\) is a facet, then the opposite support face \(F^*\) is the facet \(-F\).  Thus the facet-opposite formula gives
\[
        \delta(P)=\sum_{\{F,-F\}}(e(F)+e(-F)-1)
        =\sum_{\{F,-F\}}(2e(F)-1).
\]
Since opposite facets have the same number of edges and every facet belongs to exactly one opposite pair,
\[
        \sum_{\{F,-F\}}2e(F)=\sum_{F\text{ facet}}e(F)=2E(P),
\]
because every edge lies in exactly two facets.  The number of opposite facet pairs is \(f_2(P)/2\).  Therefore \(\delta(P)=2E(P)-f_2(P)/2\).
\end{proof}

\begin{remark}[Relation to the homological theorem]
Theorem~\ref{thm:exact-local-defect} gives a local normal-fan formula for the same integer computed homologically by Theorem~\ref{thm:main-strong}.  The identity
\[
        \delta(P)=\beta_2(X(P);\mathbb Q)
\]
explains why the defect is a Betti number, while the exact local formula explains where the defect is supported in the projective normal fan.
\end{remark}

\subsection{Facet-normal profiles and the spherical normal graph}

The facet-opposite formula can be sharpened into a normal-fan profile.  For a facet $F$ of $P$, let
\[
        F^*=F_P(-n_F)
\]
be the support face opposite to the outward unit normal of $F$.  Define
\[
\mathcal F_i(P)=\{F\text{ facet of }P:\dim F^*=i\},\qquad i=0,1,2.
\]
Thus $\mathcal F_0$ consists of facets opposite to vertices, $\mathcal F_1$ consists of facets opposite to edges, and $\mathcal F_2$ consists of facets opposite to facets.

\begin{theorem}[Facet-normal profile]
For every convex polyhedron $P\subset\mathbb R^3$,
\[
        \delta(P)=\sum_{F\text{ facet}}\omega(F),
\]
where
\[
\omega(F)=
\begin{cases}
0, & F^*\text{ is a vertex},\\[1mm]
1, & F^*\text{ is an edge},\\[1mm]
e(F)-\dfrac12, & F^*\text{ is a facet}.
\end{cases}
\]
Equivalently, if $f_i^*=|\mathcal F_i(P)|$ and $I_2=\sum_{F\in\mathcal F_2(P)}e(F)$, then
\[
        \delta(P)=f_1^*+I_2-\frac{f_2^*}{2}.
\]
\end{theorem}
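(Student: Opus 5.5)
The plan is to derive the profile directly from Corollary~\ref{cor:facet-opposite-formula}. We redistribute each facet--facet contribution equally between its two members and check that every facet receives exactly the weight \(\omega(F)\).

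First, I would use the fact that for every facet \(F\) the opposite support face \(F^*=F_P(-n_F)\) is a proper face of dimension \(0\), \(1\) or \(2\). Hence the classes \(\mathcal F_0(P)\), \(\mathcal F_1(P)\), \(\mathcal F_2(P)\) partition the facet set, and it suffices to show
\[
\sum_{F\in\mathcal F_1(P)}1+\sum_{F\in\mathcal F_2(P)}\Bigl(e(F)-\tfrac12\Bigr)
=\#\{F:F^*\text{ is an edge}\}+\sum_{\{F,G\}\in\mathcal F_{\rm opp}(P)}\bigl(e(F)+e(G)-1\bigr).
\]
The first sums agree by definition of \(\mathcal F_1\). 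Facets in \(\mathcal F_0\) contribute \(0\) on both sides.

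The key step is to show that \(F\mapsto F^*\) restricts to a fixed-point-free involution on \(\mathcal F_2(P)\), whose orbits are exactly the unordered pairs in \(\mathcal F_{\rm opp}(P)\). Suppose \(F^*=G\) is a facet. Then \(G\) is the support face in direction \(-n_F\), and since a facet has a unique outward unit normal, \(n_G=-n_F\). Therefore \(G^*=F_P(-n_G)=F_P(n_F)=F\), and \(G\in\mathcal F_2(P)\). Also \(G\ne F\), because \(n_G\neq n_F\). Conversely, every pair in \(\mathcal F_{\rm opp}(P)\) arises in this way. So \(\mathcal F_2(P)\) is the disjoint union of the pairs \(\{F,G\}\in\mathcal F_{\rm opp}(P)\).

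Regrouping by these pairs gives \(\sum_{F\in\mathcal F_2}(e(F)-\tfrac12)=\sum_{\{F,G\}}(e(F)+e(G)-1)\), which proves the first formula. The equivalent form follows by expanding: the total is \(f_1^*\cdot 1+I_2-\tfrac12 f_2^*\), with \(f_2^*\) even by the involution.

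The only point needing care is the involution step, specifically the uniqueness of the outward normal of a facet, which uses full-dimensionality of \(P\). Everything else is bookkeeping on top of Corollary~\ref{cor:facet-opposite-formula}.
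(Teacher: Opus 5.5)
Your proposal is correct and follows essentially the same route as the paper: both derive the profile from Corollary~\ref{cor:facet-opposite-formula} by splitting each opposite-facet contribution $e(F)+e(G)-1$ as $\bigl(e(F)-\tfrac12\bigr)+\bigl(e(G)-\tfrac12\bigr)$. Your argument that $F^*=G$ forces $n_G=-n_F$, hence $G^*=F$ and $G\neq F$, spells out the involution step that the paper only asserts.
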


\begin{proof}
The facets in $\mathcal F_1(P)$ are exactly those which contribute the edge--facet term in the facet-opposite formula, and each contributes $1$.  The facets in $\mathcal F_2(P)$ occur in unordered opposite pairs $\{F,G\}$, because $F^*=G$ implies $G^*=F$.  The pair $\{F,G\}$ contributes $e(F)+e(G)-1$, which equals
\[
        \left(e(F)-\frac12\right)+\left(e(G)-\frac12\right).
\]
Facets in $\mathcal F_0(P)$ do not contribute.  This proves both forms of the formula.
\end{proof}

\begin{definition}[Facet-paired polyhedron]
A convex polyhedron $P$ is called \emph{facet-paired} if $F^*$ is a facet for every facet $F$ of $P$.
\end{definition}

\begin{corollary}[Extremal bound]
For every convex polyhedron $P\subset\mathbb R^3$,
\[
        0\le \delta(P)\le M(P):=2E(P)-\frac{f_2(P)}2.
\]
Moreover,
\[
        \delta(P)=0
        \quad\Longleftrightarrow\quad
        F^*\text{ is a vertex for every facet }F,
\]
and
\[
        \delta(P)=M(P)
        \quad\Longleftrightarrow\quad
        P\text{ is facet-paired}.
\]
\end{corollary}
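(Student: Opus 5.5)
The plan is to work directly from the facet-normal profile theorem, which writes \(\delta(P)=\sum_{F}\omega(F)\) with nonnegative weights, and to compare each \(\omega(F)\) with the weight \(e(F)-\tfrac12\) it would have in the facet-paired case. First I will identify \(M(P)\) as a sum of such weights:
\[
\sum_{F\text{ facet}}\Bigl(e(F)-\tfrac12\Bigr)=2E(P)-\tfrac{f_2(P)}{2}=M(P),
\]
because every edge lies in exactly two facets. This is the same double count used in Corollary~\ref{cor:centrally-symmetric-defect}. Subtracting the profile formula then gives
\[
M(P)-\delta(P)=\sum_{F}\Bigl(e(F)-\tfrac12-\omega(F)\Bigr).
\]

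Next I will check the sign of each summand \(e(F)-\tfrac12-\omega(F)\). Every facet has \(e(F)\ge 3\).
\begin{itemize}
\item If \(F^*\) is a vertex, the summand is \(e(F)-\tfrac12\ge\tfrac52>0\).
\item If \(F^*\) is an edge, the summand is \(e(F)-\tfrac32\ge\tfrac32>0\).
\item If \(F^*\) is a facet, the summand is \(0\).
\end{itemize}
So every summand is nonnegative, which gives \(\delta(P)\le M(P)\). Moreover \(\delta(P)=M(P)\) holds exactly when every summand vanishes, that is, when every \(F^*\) is a facet. By definition this means \(P\) is facet-paired.

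For the lower bound, \(\omega(F)\ge0\) for every facet, since \(e(F)-\tfrac12>0\). Hence \(\delta(P)\ge0\); this was also already established in the zero-defect criterion. The characterization of \(\delta(P)=0\) is exactly Corollary~\ref{cor:facet-opposite-zero}, and I will cite it. It can also be read off the profile directly: \(\omega(F)=0\) holds iff \(F^*\) is a vertex, because the other two cases give strictly positive weights.

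There is no real obstacle here. The only point needing care is the bookkeeping identity \(\sum_F e(F)=2E(P)\), together with the observation that \(F^*\) is always a vertex, an edge, or a facet, so the three cases of \(\omega\) are exhaustive.
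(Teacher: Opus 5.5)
Your proposal is correct and follows essentially the same route as the paper's proof. You compare each facet weight $\omega(F)$ with $e(F)-\tfrac12$ using $e(F)\ge 3$, sum via the double count $\sum_F e(F)=2E(P)$, and read off the equality case and the zero case from the facet-normal profile. Your explicit positive gaps ($\ge\tfrac52$ for vertex-opposite facets, $\ge\tfrac32$ for edge-opposite facets) just make the paper's one-line equality argument more precise.
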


\begin{proof}
The lower bound is the nonnegativity theorem, or directly the facet-normal profile.  For the upper bound, compare each facet weight with $e(F)-1/2$.  Since $e(F)\ge3$ for every facet,
\[
        \omega(F)\le e(F)-\frac12.
\]
Summing over all facets gives
\[
        \delta(P)\le \sum_{F\text{ facet}}\left(e(F)-\frac12\right)=2E(P)-\frac{f_2(P)}2,
\]
because every edge is contained in two facets.  Equality holds exactly when every facet has the third type of contribution, namely when every facet is opposite to a facet.  The zero criterion is Corollary~\ref{cor:facet-opposite-zero}, equivalently the zero case of the facet-normal profile.
\end{proof}

\begin{corollary}[Facet-paired simple and simplicial cases]
If $P$ is facet-paired, then
\[
        \delta(P)=2E(P)-\frac{f_2(P)}2.
\]
If, in addition, $P$ is simple, then
\[
        \delta(P)=\frac{11V(P)}4-1=\frac{11f_2(P)}2-12.
\]
If, in addition, $P$ is simplicial, then
\[
        \delta(P)=\frac{5f_2(P)}2=5V(P)-10.
\]
\end{corollary}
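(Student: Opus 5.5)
The plan is to read the first identity directly off the preceding extremal bound corollary, and then to specialize using the Euler relation together with the edge--vertex or edge--facet incidence count. No new geometry is needed. Everything reduces to linear algebra among \(V(P)\), \(E(P)\) and \(f_2(P)\).

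First, if \(P\) is facet-paired, then the preceding corollary gives the equality case \(\delta(P)=M(P)=2E(P)-f_2(P)/2\). Equivalently, in the facet-normal profile every facet has weight \(\omega(F)=e(F)-\tfrac12\), and summing over all facets gives \(2E(P)-f_2(P)/2\), since each edge lies in exactly two facets.

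Next, in the simple case each vertex has degree three, so \(2E=3V\). Euler's relation \(V-E+f_2=2\) then gives \(f_2=2+V/2\), equivalently \(V=2f_2-4\). Substituting into \(2E-f_2/2\) yields
\[
3V-1-\tfrac{V}{4}=\tfrac{11V}{4}-1,
\]
and replacing \(V\) by \(2f_2-4\) yields \(\tfrac{11f_2}{2}-12\).

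In the simplicial case every facet is a triangle, so \(2E=3f_2\). Then \(2E-f_2/2=3f_2-f_2/2=\tfrac{5f_2}{2}\). Euler's relation now gives \(V=2+f_2/2\), so \(f_2=2V-4\) and \(\delta(P)=5V-10\).

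There is no real obstacle: the only substantive input is the equality case of the extremal bound, which is already established. The rest is bookkeeping, where the one thing to check is that the two incidence identities are applied in the right direction (vertex degrees for simple, facet sizes for simplicial).
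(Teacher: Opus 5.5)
Your proposal is correct and follows the paper's proof essentially step for step. The first formula is the equality case of the extremal bound. The simple and simplicial formulas then follow from $2E=3V$, respectively $2E=3f_2$, combined with Euler's relation $V-E+f_2=2$.
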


\begin{proof}
The first formula is the equality case of the extremal bound.  If $P$ is simple, then $2E=3V$ and Euler's formula gives $f_2=2+V/2$, hence
\[
        \delta=3V-\frac12\left(2+\frac V2\right)=\frac{11V}{4}-1.
\]
The expression in terms of $f_2$ follows from $V=2f_2-4$.  If $P$ is simplicial, then $2E=3f_2$, so
\[
        \delta=3f_2-\frac{f_2}{2}=\frac{5f_2}{2}.
\]
Since a simplicial three-polyhedron satisfies $f_2=2V-4$, this is $5V-10$.
\end{proof}

Let $\mathcal N(P)$ denote the spherical normal fan of $P$, and let
\[
        G_P=\mathcal N(P)^{(1)}
\]
be its one-skeleton on $S^2$.  A vertex $x$ of $G_P$ is the outward unit normal of a unique facet $F_x$, and
\[
        \deg_{G_P}(x)=e(F_x).
\]
The antipodal point $-x$ lies in exactly one cell of $\mathcal N(P)$.

\begin{theorem}[Spherical normal-graph profile]
For every convex polyhedron $P\subset\mathbb R^3$,
\[
\boxed{
        \delta(P)=
        \#\{x\in V(G_P):-x\in E(G_P)^\circ\}
        +
        \sum_{\{x,-x\}\subset V(G_P)}
        \bigl(\deg x+\deg(-x)-1\bigr).
}
\]
Equivalently,
\[
        \delta(P)=\sum_{x\in V(G_P)}\Omega(x),
\]
where
\[
\Omega(x)=
\begin{cases}
0, & -x\text{ lies in a two-cell of }\mathcal N(P),\\[1mm]
1, & -x\text{ lies in the relative interior of an edge of }G_P,\\[1mm]
\deg(x)-\dfrac12, & -x\in V(G_P).
\end{cases}
\]
\end{theorem}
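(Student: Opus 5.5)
This is a direct translation of the facet-normal profile theorem through the dictionary between faces of \(P\) and cells of the spherical normal fan \(\mathcal N(P)\). Recall that \(\dim N^\circ(F)=2-\dim F\). So a facet \(F\) corresponds to a vertex \(x=n_F\) of \(G_P\), an edge of \(P\) to a relatively open edge of \(G_P\), and a vertex of \(P\) to an open two-cell of \(\mathcal N(P)\). Since the exact cells \(N^\circ(\cdot)\) partition \(S^2\), every point \(-x\) lies in exactly one of them.

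The plan is to take a vertex \(x\in V(G_P)\) with facet \(F_x\), so \(x=n_{F_x}\). The first step is to observe that \(-x\in N^\circ(F_x^*)\), because by definition \(F_x^*=F_P(-x)\) and \(N^\circ(G)=\{u:F_P(u)=G\}\). This gives the correspondences
\[
-x\in\text{open two-cell}\iff F_x^*\text{ is a vertex},\qquad
-x\in E(G_P)^\circ\iff F_x^*\text{ is an edge},\qquad
-x\in V(G_P)\iff F_x^*\text{ is a facet}.
\]
In the last case \(F_x^*\) is the facet with outward normal \(-x\), so \(F_x^*=F_{-x}\). The second step uses the stated fact \(\deg_{G_P}(x)=e(F_x)\): the edges of \(G_P\) at \(x\) are the normal arcs \(N^+(e)\) of the edges \(e\subset F_x\), and each is a great-circle arc joining the normals of the two facets through \(e\). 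Together these give \(\Omega(x)=\omega(F_x)\), and summing over the bijection \(x\mapsto F_x\) between \(V(G_P)\) and facets turns the facet-normal profile into \(\delta(P)=\sum_x\Omega(x)\).

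For the boxed form, group the terms. Vertices with \(\Omega(x)=1\) are counted by \(\#\{x:-x\in E(G_P)^\circ\}\). Vertices with \(-x\in V(G_P)\) come in unordered antipodal pairs \(\{x,-x\}\), because \(-(-x)=x\) and both are vertices. Each such pair contributes
\[
\bigl(\deg x-\tfrac12\bigr)+\bigl(\deg(-x)-\tfrac12\bigr)=\deg x+\deg(-x)-1.
\]
Note that \(x\neq -x\), so these pairs really consist of two distinct vertices. Equivalently, this is the facet-opposite formula of Corollary~\ref{cor:facet-opposite-formula} rewritten in normal-graph language.

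There is no real obstacle. The only point needing care is the first identification \(-x\in N^\circ(F_P(-x))\) together with the disjointness of the exact cells, which makes the three cases exclusive and exhaustive. The degree identity is the standard normal-fan fact quoted just before the theorem.
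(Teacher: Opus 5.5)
Your proposal is correct and follows essentially the same route as the paper. Both arguments identify the cell containing $-x$ as $N^\circ(F_P(-x))$, which gives $\Omega(x)=\omega(F_x)$ via $\deg x=e(F_x)$. Both then sum the facet-normal profile over facets and group the vertex-hit terms into antipodal pairs $\{x,-x\}$.
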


\begin{proof}
The normal vertex $x$ corresponds to a facet $F_x$.  The cell of $\mathcal N(P)$ containing $-x$ is dual to the support face $F_x^*=F_P(-x)$.  Thus $-x$ lies in a two-cell, an open edge, or a vertex of the normal fan according as $F_x^*$ is a vertex, an edge, or a facet.  The one-vertex weight $\Omega(x)$ is therefore exactly the facet-normal weight $\omega(F_x)$.  Summing over all normal vertices gives the second formula, and grouping the vertex-hit terms into antipodal pairs gives the first formula.
\end{proof}

\begin{corollary}[Normal-fan invariance]
The defect $\delta(P)$ is determined by the antipodal spherical normal cell decomposition.  More precisely, it is determined by the embedded normal graph $G_P\subset S^2$ together with the antipodal map and the cell of $\mathcal N(P)$ containing $-x$ for each normal vertex $x$.
\end{corollary}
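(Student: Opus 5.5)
The corollary is a direct consequence of the spherical normal-graph profile, so my plan is to read off which data that formula actually consumes. By that theorem,
\[
\delta(P)=\sum_{x\in V(G_P)}\Omega(x),
\]
where $\Omega(x)$ depends only on two things: the type of the cell of $\mathcal N(P)$ containing $-x$ (a two-cell, an open edge of $G_P$, or a vertex of $G_P$), and, in the vertex case, the degrees $\deg x$ and $\deg(-x)$ in $G_P$. Hence it suffices to check that each of these ingredients is determined by the embedded graph $G_P\subset S^2$, the antipodal map, and the specified cell containing $-x$ for each normal vertex $x$.

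First, the vertex set $V(G_P)$ is part of the embedded graph, and the index set of the sum is exactly this set. Second, the degree $\deg_{G_P}(x)$ is purely graph-theoretic; the identity $\deg_{G_P}(x)=e(F_x)$ recorded before the theorem is the only geometric input, and it is not needed for invariance. Third, the case distinction in $\Omega(x)$ is precisely the prescribed datum of which cell contains $-x$. If one wants the stronger first sentence, that $\delta(P)$ is determined by the antipodal cell decomposition, I would note that this datum is itself recoverable from the decomposition together with the antipodal map. The open cells of $\mathcal N(P)$ partition $S^2$, so applying $-1$ to each vertex locates it in a unique cell, and the dimension of that cell is intrinsic to the decomposition.

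Finally, I would conclude that any two polyhedra whose normal decompositions agree under a homeomorphism of $S^2$ commuting with the antipodal map have equal defect. Such a homeomorphism preserves vertices, degrees, and the dimension of the cell containing each antipode, so every $\Omega(x)$ matches term by term. There is no real obstacle here; the only point needing care is being explicit that the sum is indexed intrinsically by graph vertices, so that no metric information, such as facet areas or edge lengths, enters the formula.
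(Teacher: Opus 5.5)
Your proposal is correct and matches the paper, which states this corollary without separate proof as an immediate reading of the spherical normal-graph profile $\delta(P)=\sum_{x\in V(G_P)}\Omega(x)$. Each term of that sum depends only on the graph degree and on the dimension of the cell of $\mathcal N(P)$ containing $-x$. One small correction: in the vertex case $\Omega(x)=\deg x-\tfrac12$ uses only $\deg x$, and $\deg(-x)$ enters only when you group terms into antipodal pairs.
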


\begin{corollary}[Near-maximum gap]
Let $M(P)=2E(P)-f_2(P)/2$.  If $f_2(P)$ is even and $P$ is not facet-paired, then
\[
        \delta(P)\le M(P)-3.
\]
If $f_2(P)$ is odd, then
\[
        \delta(P)\le \lfloor M(P)\rfloor-1.
\]
\end{corollary}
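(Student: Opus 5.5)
We start from the facet-normal profile \(\delta(P)=\sum_F\omega(F)\) and compare it with \(M(P)=\sum_F\bigl(e(F)-\tfrac12\bigr)\). The gap is
\[
        M(P)-\delta(P)=\sum_{F}\bigl(e(F)-\tfrac12-\omega(F)\bigr),
\]
and the summand \(g(F)=e(F)-\tfrac12-\omega(F)\) is \(e(F)-\tfrac12\ge \tfrac52\) when \(F^*\) is a vertex, \(e(F)-\tfrac32\ge\tfrac32\) when \(F^*\) is an edge, and \(0\) when \(F^*\) is a facet. So every facet in \(\mathcal F_0\cup\mathcal F_1\) contributes at least \(\tfrac32\), and the gap is a sum of such contributions. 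Note also that \(\delta(P)\) is an integer, being a Betti number by Corollary~\ref{cor:intro-defect}, or directly from the exact local formula. Meanwhile \(M(P)=2E(P)-f_2(P)/2\) is an integer when \(f_2\) is even and a half-integer when \(f_2\) is odd.

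In the case \(f_2\) odd, we use that facets in \(\mathcal F_2\) come in opposite pairs, so \(f_2^*\) is even. Hence \(\mathcal F_0\cup\mathcal F_1\) has odd, in particular positive, cardinality, and the gap is at least \(\tfrac32\). Then \(\delta\le M-\tfrac32=\lfloor M\rfloor-1\), since \(\lfloor M\rfloor=M-\tfrac12\).

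In the case \(f_2\) even and \(P\) not facet-paired, the gap \(M-\delta\) is a positive integer. The set \(S=\mathcal F_0\cup\mathcal F_1\) is nonempty and has even cardinality, because \(f_2-f_2^*\) is even. So \(|S|\ge2\) and the gap is at least \(2\cdot\tfrac32=3\), which gives \(\delta\le M-3\).

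The only point needing care is the parity observation \(F^*=G\Rightarrow G^*=F\) for facets. This is already used in the facet-normal profile proof: if \(F_P(-n_F)=G\) is a facet then \(n_G=-n_F\). Hence \(|\mathcal F_2|\) is even. No other obstacle is expected; the rest is bookkeeping with the lower bounds \(e(F)\ge3\).
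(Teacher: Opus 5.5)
Your proof is correct and takes essentially the same route as the paper. The paper writes the doubled gap \(2(M(P)-\delta(P))\) as a sum of odd positive integers over \(\mathcal F_0(P)\cup\mathcal F_1(P)\), each at least \(3\), which is your bound \(g(F)\ge\tfrac32\), and it uses the same parity count of facets outside \(\mathcal F_2(P)\). The only differences are cosmetic: you work with half-integers instead of doubling, and integrality of \(\delta(P)\) holds directly from its definition \(V-1-A+B\), so no appeal to Betti numbers is needed.
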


\begin{proof}
Write the co-deficiency as
\[
        2(M(P)-\delta(P))
        =
        \sum_{F\in\mathcal F_0(P)}(2e(F)-1)
        +
        \sum_{F\in\mathcal F_1(P)}(2e(F)-3).
\]
Each summand on the right is an odd positive integer, and a facet in $\mathcal F_1(P)$ contributes at least $3$, while a facet in $\mathcal F_0(P)$ contributes at least $5$.
If $f_2(P)$ is even and $P$ is not facet-paired, then the number of facets outside $\mathcal F_2(P)$ is a positive even integer, because facets in $\mathcal F_2(P)$ occur in pairs.  Hence the co-deficiency is at least $6$, which gives $\delta(P)\le M(P)-3$.
If $f_2(P)$ is odd, at least one facet lies outside $\mathcal F_2(P)$, so $M(P)-\delta(P)\ge3/2$.  Since $M(P)$ is then a half-integer and $\delta(P)$ is an integer, this is equivalent to $\delta(P)\le\lfloor M(P)\rfloor-1$.
\end{proof}

\begin{corollary}[Small-defect structure]
Let $m(P)$ be the number of unordered opposite facet pairs of $P$, and let $f_1^*=|\mathcal F_1(P)|$.  Then
\[
        \delta(P)\ge f_1^*+5m(P).
\]
In particular, if $0<\delta(P)<5$, then $m(P)=0$ and $\delta(P)=f_1^*$.  If $\delta(P)=5$, then either $m(P)=0$ and $f_1^*=5$, or $m(P)=1$, the unique opposite facet pair consists of two triangular facets, and $f_1^*=0$.
\end{corollary}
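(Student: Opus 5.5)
We start from the facet-opposite formula (Corollary~\ref{cor:facet-opposite-formula}), rewritten as in the facet-normal profile:
\[
\delta(P)=f_1^*+\sum_{\{F,G\}\in\mathcal F_{\rm opp}(P)}\bigl(e(F)+e(G)-1\bigr).
\]
The number of opposite pairs is \(|\mathcal F_{\rm opp}(P)|=m(P)\). Each facet has at least three edges, so \(e(F)\ge 3\) and \(e(G)\ge 3\). Hence every pair contributes
\[
e(F)+e(G)-1\ge 5,
\]
with equality exactly when both \(F\) and \(G\) are triangles. Summing over the pairs gives \(\delta(P)\ge f_1^*+5m(P)\).

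For the consequences, suppose first that \(0<\delta(P)<5\). Then \(5m(P)\le\delta(P)<5\), which forces \(m(P)=0\). The facet-pair sum is then empty, so \(\delta(P)=f_1^*\).

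Now suppose \(\delta(P)=5\). Then \(5m(P)\le 5\), so \(m(P)\in\{0,1\}\).
\begin{itemize}
\item If \(m(P)=0\), then \(\delta(P)=f_1^*\), so \(f_1^*=5\).
\item If \(m(P)=1\), the inequality reads \(5\ge f_1^*+5\), so \(f_1^*=0\). Then \(\delta(P)\) equals the single pair contribution \(e(F)+e(G)-1\), and this must be \(5\). By the equality analysis above, both \(F\) and \(G\) are triangles.
\end{itemize}

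There is no real obstacle. The only point to check is that the decomposition used is the exact one from Corollary~\ref{cor:facet-opposite-formula}, and that the first term counts facets \(F\) with \(F^*\) an edge, which is \(f_1^*\) by definition. This holds because every exact edge--facet pair has a unique facet member.
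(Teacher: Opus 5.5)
Your proof is correct and follows the same route as the paper. Both start from the facet-opposite formula and use $e(F),e(G)\ge 3$ to show that each opposite facet pair contributes $e(F)+e(G)-1\ge 5$, with equality exactly for two triangles; both then read off the cases $0<\delta(P)<5$ and $\delta(P)=5$ by comparing with this threshold, which you spell out where the paper calls it immediate.
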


\begin{proof}
Every facet--facet opposite pair $\{F,G\}$ contributes $e(F)+e(G)-1\ge3+3-1=5$, and every edge-opposite facet contributes $1$.  This proves the inequality.  The two stated consequences follow immediately by comparing with the threshold $5$.
\end{proof}

\subsection{Local square-belt modules}

The local weight in Theorem~\ref{thm:exact-local-defect} is also the rank of a natural local square-belt module.

Let \(\{F,G\}\in\mathcal O_{\rm ex}(P)\).  Let \(K_{V(F),V(G)}\) be the complete bipartite graph with parts \(V(F)\) and \(V(G)\), and set
\[
        C_2(F,G)=
        \left\langle S_{e,f}:e\in E(F),\ f\in E(G)\right\rangle_{\mathbb Z}.
\]
Here \(S_{e,f}\) denotes the square cell associated with the edge pair \((e,f)\).  Define
\[
        K(F,G)=
        \ker\left(\partial:C_2(F,G)\to C_1(K_{V(F),V(G)};\mathbb Z)\right).
\]

\begin{proposition}[Local square-belt rank]\label{prop:local-square-belt-rank}
For every exact opposite face pair \(\{F,G\}\), the group \(K(F,G)\) is free abelian of rank
\[
        \rank K(F,G)=
        e(F)e(G)-(v(F)-1)(v(G)-1).
\]
\end{proposition}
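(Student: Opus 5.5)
Since \(K(F,G)\) is a subgroup of the free abelian group \(C_2(F,G)\cong\mathbb Z^{e(F)e(G)}\), it is automatically free. The task therefore reduces to computing its rank, which I would do by rank--nullity:
\[
\rank K(F,G)=e(F)e(G)-\rank\operatorname{im}\partial .
\]
It then suffices to show that the image of \(\partial\) in \(C_1(K_{V(F),V(G)};\mathbb Z)\) has rank \((v(F)-1)(v(G)-1)\).

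The key observation is that the square boundary is a product (tensor) construction. For faces of the relevant types the vertices and edges of \(F\) form a graph \(\Gamma_F\): a single vertex, a single edge, or an \(m\)-cycle (the boundary of a polygonal facet). The same holds for \(\Gamma_G\). The square \(S_{e,f}\) is the product cell \(e\times f\), and its boundary is the four-cycle on the endpoints. I would identify \(C_1(K_{V(F),V(G)})\) with \(C_0(\Gamma_F)\otimes C_0(\Gamma_G)\), sending the bipartite edge \(\{x,y\}\) to \(x\otimes y\). Under this identification, and with suitable orientations, the square boundary becomes
\[
\partial(e\otimes f)=\partial e\otimes\partial f .
\]
This is because the four-cycle \(x_0y_0,\,y_0x_1,\,x_1y_1,\,y_1x_0\) with alternating signs equals \((x_1-x_0)\otimes(y_1-y_0)\) up to sign.

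The image of \(\partial\) is then \(\operatorname{im}(\partial_F)\otimes\operatorname{im}(\partial_G)\). Over \(\mathbb Q\), the rank of a tensor product of images is the product of the ranks. Since each \(\Gamma\) is connected, \(\rank\operatorname{im}\partial_\Gamma=v-1\), so the image has rank \((v(F)-1)(v(G)-1)\). Substituting into the rank--nullity formula gives the claim.

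I expect the main obstacle to be checking the sign conventions for the attaching map of \(S_{e,f}\), so that the identification with the tensor product really holds. This matters especially for degenerate cases: when \(F\) or \(G\) is a vertex, \(C_2=0\) and \(v-1=0\), which is consistent. A secondary point is that \(F\) and \(G\) might share vertices, so that \(K_{V(F),V(G)}\) is not literally bipartite. For an exact opposite pair this cannot happen, since \(F\subseteq F_P(u)\) and \(G\subseteq F_P(-u)\) are disjoint for a full-dimensional \(P\); I would note this explicitly. Finally, because the image is only used rationally, integrality issues do not arise.
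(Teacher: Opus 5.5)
Your proposal is correct and is essentially the paper's proof: the paper also identifies bipartite $1$-chains with integer arrays on $V(F)\times V(G)$, writes each square boundary as $(\mathbf e_{v_1}-\mathbf e_{v_0})\otimes(\mathbf e_{w_1}-\mathbf e_{w_0})$, gets image rank $(v(F)-1)(v(G)-1)$ from connectivity of the two one-skeleta, and concludes by rank--nullity and freeness of subgroups of free abelian groups. Your explicit sign check and your remark that $F\cap G=\varnothing$ fill in details the paper leaves implicit; disjointness makes the graph genuinely bipartite and makes the $e(F)e(G)$ squares distinct cells.
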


\begin{proof}
For \(e=[v_0,v_1]\subset F\) and \(f=[w_0,w_1]\subset G\), the boundary of \(S_{e,f}\) is the elementary four-cycle
\[
        (v_0,w_0)-(v_1,w_0)+(v_1,w_1)-(v_0,w_1)
\]
inside \(K_{V(F),V(G)}\).  Identifying the cycle space of this complete bipartite graph with the group of integer matrices on \(V(F)\times V(G)\) with row sums and column sums zero, this boundary is
\[
        (\mathbf e_{v_1}-\mathbf e_{v_0})\otimes
        (\mathbf e_{w_1}-\mathbf e_{w_0}).
\]
The incidence subgroup generated by the differences \(\mathbf e_{v_1}-\mathbf e_{v_0}\) for edges of the connected graph \(F^{(1)}\) has rank \(v(F)-1\).  Similarly the incidence subgroup generated by the edges of \(G^{(1)}\) has rank \(v(G)-1\).  Therefore the image of the local boundary map has rank
\[
        (v(F)-1)(v(G)-1).
\]
The domain has rank \(e(F)e(G)\), so the kernel has the stated rank.  Since it is a subgroup of a free abelian group, it is free.
\end{proof}

Combining Theorem~\ref{thm:exact-local-defect} and Proposition~\ref{prop:local-square-belt-rank}, one may write
\[
        \delta(P)=
        \sum_{\{F,G\}\in\mathcal O_{\rm ex}(P)}
        \rank K(F,G).
\]
This rank identity should not be confused with a canonical decomposition of \(H_2(X(P);\mathbb Z)\) as a direct sum of the local modules \(K(F,G)\).  Such decompositions require additional choices or filtrations; the prism family gives one controlled instance of this phenomenon.

\section{A generic Euler relation in all dimensions}

Let $\mathcal N(P)$ be the spherical normal fan of $P$, and let $-\mathcal N(P)$ be its antipodal reflection.  Their common refinement
\[
        \mathcal C(P)=\mathcal N(P)\wedge(-\mathcal N(P))
\]
is a cell decomposition of $S^{d-1}$.  Each relatively open cell of $\mathcal C(P)$ has a unique exact label $(F,G)$, where
\[
        F=F_P(u),
        \qquad
        G=F_P(-u)
\]
for $u$ in that cell.

\begin{figure}[ht]
\centering
\begin{tikzpicture}[scale=0.88,every node/.style={font=\small}]
  % left fan
  \draw[thick] (0,0) circle (1.18);
  \foreach \a in {0,45,90,135} {
    \draw[gray!50] (0,0)--({1.18*cos(\a)},{1.18*sin(\a)});
    \draw[gray!50] (0,0)--({-1.18*cos(\a)},{-1.18*sin(\a)});
  }
  \node at (0,-1.52) {$\mathcal N(P)$};

  % right overlay
  \begin{scope}[shift={(4.7,0)}]
    \draw[thick] (0,0) circle (1.18);
    \draw[gray!50] (0,-1.18)--(0,1.18);
    \draw[gray!50] (-1.18,0)--(1.18,0);
    \draw[gray!50] (-0.84,-0.84)--(0.84,0.84);
    \draw[gray!50] (-0.84,0.84)--(0.84,-0.84);
    \draw[gray!65] (-1.14,0.18).. controls (-0.42,0.54) and (0.42,0.54)..(1.14,0.18);
    \draw[gray!65,dashed] (-1.14,-0.18).. controls (-0.42,-0.54) and (0.42,-0.54)..(1.14,-0.18);
    \fill (0,0.18) circle (1.3pt);
    \node[anchor=west] at (1.28,0.42) {edge--edge crossing};
    \node at (0,-1.52) {$\mathcal N(P)\wedge(-\mathcal N(P))$};
  \end{scope}

  % title above, no arrow to avoid collisions
  \node[align=center] at (2.35,1.78) {overlay of\\normal fans};
\end{tikzpicture}
\caption{The generic Euler count comes from the common refinement of the spherical normal fan and its antipodal reflection. In dimension \(3\), antipodal edge-midpoint pairs appear as crossings in this overlay.}
\label{fig:normal-fan-overlay}
\end{figure}

\begin{definition}[Transverse antipodal position]
We say that $P$ is in transverse antipodal position if, for every exact ordered antipodal pair $(F,G)$, the intersection
\[
        N^\circ(F)\cap\bigl(-N^\circ(G)\bigr)
\]
is a relatively open cell of dimension
\[
        d-1-\dim F-\dim G.
\]
In particular, exact intersections occur only when $\dim F+\dim G\le d-1$.
\end{definition}

\begin{theorem}[Generic antipodal Euler relation]\label{thm:generic-d}
Let $P\subset\mathbb R^d$ be a convex $d$-polytope in transverse antipodal position.  Then
\[
\boxed{
\sum_{(F,G)\ \mathrm{exact}}
(-1)^{d-1-\dim F-\dim G}
=
1+(-1)^{d-1},
}
\]
where the sum is over exact ordered antipodal pairs of proper faces of $P$.
\end{theorem}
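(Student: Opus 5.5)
The identity will come from computing the Euler characteristic of $S^{d-1}$ with the cell decomposition $\mathcal C(P)=\mathcal N(P)\wedge(-\mathcal N(P))$. Each relatively open cell of $\mathcal C(P)$ carries a unique exact label $(F,G)=(F_P(u),F_P(-u))$, constant on the cell. Conversely, by definition the cells with label $(F,G)$ are the pieces of $N^\circ(F)\cap(-N^\circ(G))$. This set is nonempty exactly when $(F,G)$ is an exact ordered antipodal pair. Hence the open cells of $\mathcal C(P)$ are partitioned according to exact ordered antipodal pairs.

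Next I use the transversality hypothesis. It says that for each exact ordered pair, $N^\circ(F)\cap(-N^\circ(G))$ is a single relatively open cell of dimension $d-1-\dim F-\dim G$. This intersection is convex in the spherical sense, being the intersection of two relatively open spherical polyhedral cones with the sphere. It is also a relative interior of a cell of the refinement. So it contributes exactly one cell to $\mathcal C(P)$, with sign $(-1)^{d-1-\dim F-\dim G}$ in the alternating count. One point needs checking. A nonempty intersection of two relatively open convex cones is again relatively open and convex. Since $N^\circ(F)$ and $-N^\circ(G)$ both lie in an open hemisphere's worth of directions, or at least contain no antipodal pair within the intersection ($P$ is full-dimensional), the intersection is a genuine open spherical cell and not a sphere. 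I also need that $\mathcal C(P)$ really is a regular CW decomposition of $S^{d-1}$. That holds because the common refinement of two complete polyhedral fans is a complete polyhedral fan, whose cones are exactly the nonempty intersections of a cone of one fan with a cone of the other.

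Finally I sum. Since $\chi(S^{d-1})$ equals the alternating count of the open cells of any finite regular cell decomposition, we get
\[
\sum_{(F,G)\ \mathrm{exact}}(-1)^{d-1-\dim F-\dim G}=\chi(S^{d-1})=1+(-1)^{d-1}.
\]
This is the boxed formula. Equivalently, one can phrase it as Euler integration of the constant function $1$ over $S^{d-1}$, as in the proof of Theorem~\ref{thm:exact-local-defect}.

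The main obstacle is justifying that each label class is exactly one open cell of the stated dimension, so that it contributes exactly one signed term. I would isolate that the pointed-cone condition makes each intersection contractible and relatively open. Transversality then fixes the dimension. Without transversality the dimension could be larger, and the formula would need the actual dimensions; this is precisely the degeneration measured by $\delta(P)$ in dimension three. As a sanity check, for $d=3$ the generic cells are vertex--vertex (dimension 2), vertex--edge and edge--vertex (dimension 1), and edge--edge and vertex--facet (dimension 0). Combining the resulting count with the vertex-cell count $\sum_x\chi(N^+(x))$ should recover the IMO relation $A-B=V-1$.
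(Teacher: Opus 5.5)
Your proposal is correct and follows the paper's own proof. Both arguments identify exact ordered antipodal pairs with the relatively open cells of $\mathcal N(P)\wedge(-\mathcal N(P))$, use transversality to fix each cell's dimension as $d-1-\dim F-\dim G$, and read the alternating sum as $\chi(S^{d-1})$. Your extra checks fill in details the paper leaves implicit: that a nonempty intersection of relatively open cones is a single relatively open convex cell, that the cells are pointed, and that the common refinement is a complete fan.
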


\begin{proof}
Every relatively open cell of $\mathcal C(P)$ has a unique label $(F,G)$ satisfying
\[
        F=F_P(u),\qquad G=F_P(-u)
\]
for every $u$ in that cell.  Conversely, an exact ordered antipodal pair is precisely a nonempty relatively open cell of this common refinement.  By the transversality assumption, the dimension of that cell is
\[
        d-1-\dim F-\dim G.
\]
Thus the left-hand side is exactly the alternating sum of the cells of $\mathcal C(P)$.  This is the Euler characteristic of $S^{d-1}$:
\[
        \chi(S^{d-1})=1+(-1)^{d-1}.
\]
\end{proof}

\subsection{Specialization to the IMO C7 identity}

Let $P\subset\mathbb R^3$ be a C7-generic convex polyhedron. Here C7-generic means that no two edges of $P$ are parallel and no edge of $P$ is parallel to a non-adjacent face, as in the original shortlist problem. Write $V,E,F$ for the numbers of vertices, edges, and faces of $P$.  Let $A$ be the number of unordered antipodal pairs of vertices and $B$ the number of unordered antipodal pairs of edge midpoints.

The common refinement of the outward and inward normal fans is a cell decomposition of $S^2$.  In the C7-generic situation its cells can be counted as follows.

\begin{itemize}[leftmargin=2em]
\item Its two-dimensional cells correspond to ordered antipodal vertex pairs.  Hence there are $2A$ regions.
\item Its zero-dimensional cells consist of the two normal points associated with each face, together with the two oriented crossings associated with each antipodal edge-midpoint pair.  Hence there are $2F+2B$ vertices.
\item Its one-dimensional cells are obtained by subdividing the original $2E$ normal arcs.  Each antipodal edge-midpoint pair gives two crossings, and each crossing splits two arcs; hence there are $2E+4B$ one-cells.
\end{itemize}

Euler's formula on $S^2$ gives
\[
        (2F+2B)-(2E+4B)+2A=2.
\]
Therefore
\[
        A-B=E-F+1.
\]
Using Euler's formula for $P$,
\[
        V-E+F=2,
\]
we obtain
\[
\boxed{A-B=V-1.}
\]
This is precisely the identity in IMO 2006 Shortlist C7~\cite{IMO2006SL}.

\section{The antipodal square complex in dimension three}\label{sec:square-complex}

The generic identity suggests that $V-1-A+B$ measures the failure of the C7 equality in degenerate configurations.  We now encode this quantity topologically.

\begin{definition}[Antipodal graph and square complex]
Let $P\subset\mathbb R^3$ be a convex polyhedron.

The \emph{antipodal graph} $G_A(P)$ has vertex set $\Vertices(P)$ and an edge $vw$ whenever $v,w$ are distinct antipodal vertices.

The \emph{antipodal square complex} $X(P)$ is the two-dimensional finite CW complex obtained from $G_A(P)$ by attaching one square for every antipodal pair of edge midpoints.  If $e=vv'$ and $f=ww'$ are such a pair of edges, we attach a square along the cycle
\[
        v-w-v'-w'-v.
\]

\begin{figure}[ht]
\centering
\begin{tikzpicture}[scale=1.0, every node/.style={font=\small}]
  \coordinate (v) at (0,0);
  \coordinate (vp) at (1.45,0);
  \coordinate (w) at (0.35,1.75);
  \coordinate (wp) at (1.8,1.75);

  \draw[thick] (v)--(vp);
  \draw[thick] (w)--(wp);
  \fill (v) circle (1.5pt);
  \fill (vp) circle (1.5pt);
  \fill (w) circle (1.5pt);
  \fill (wp) circle (1.5pt);

  \node[below] at (v) {$v$};
  \node[below] at (vp) {$v'$};
  \node[above] at (w) {$w$};
  \node[above] at (wp) {$w'$};
  \node[below] at ($0.5*(v)+0.5*(vp)$) {$e$};
  \node[above] at ($0.5*(w)+0.5*(wp)$) {$f$};

  \draw[dashed] (-0.4,-0.60)--(1.9,-0.60);
  \draw[dashed] (-0.05,2.35)--(2.20,2.35);
  \node at (3.7,0.9) {$\Longrightarrow$};

  \coordinate (A) at (5.5,0);
  \coordinate (B) at (7.4,0);
  \coordinate (C) at (7.4,1.9);
  \coordinate (D) at (5.5,1.9);
  \draw[thick] (A)--(B)--(C)--(D)--cycle;
  \fill (A) circle (1.2pt); \fill (B) circle (1.2pt); \fill (C) circle (1.2pt); \fill (D) circle (1.2pt);
  \node[below left] at (A) {$(v,w)$};
  \node[below right] at (B) {$(v',w)$};
  \node[above right] at (C) {$(v',w')$};
  \node[above left] at (D) {$(v,w')$};
  \node at (6.45,-0.65) {attached square};
\end{tikzpicture}
\caption{An antipodal pair of edge midpoints produces one square cell in $X(P)$.}
\label{fig:square-cell}
\end{figure}

If different edge pairs yield the same boundary cycle, they are still regarded as distinct two-cells. If the midpoints of two edges are antipodal, then the two edges are disjoint: a supporting plane through the midpoint of an edge contains the whole edge, and the two opposite supporting planes of a full-dimensional polyhedron are distinct parallel planes. Hence the attaching cycle has four distinct vertices.
\end{definition}

\begin{lemma}[Euler characteristic]\label{lem:euler-square}
For every convex polyhedron $P$,
\[
        \chi(X(P))=V(P)-A(P)+B(P).
\]
\end{lemma}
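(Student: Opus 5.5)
The plan is to read the Euler characteristic directly off the cell structure of \(X(P)\): for a finite CW complex, \(\chi\) is the alternating count of its cells, \(\chi=c_0-c_1+c_2\). The identity then reduces to showing that the three cell counts are exactly \(V(P)\), \(A(P)\) and \(B(P)\).

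\emph{Zero-cells.} By construction these are the vertices of \(P\), so \(c_0=V(P)\).

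\emph{One-cells.} These are the edges of the antipodal graph \(G_A(P)\). There is one for each unordered pair \(\{v,w\}\) of distinct antipodal vertices, so \(c_1=A(P)\). Loops do not arise, because the definition only uses distinct vertices. Multiple edges do not arise either, because antipodality is a property of the unordered pair.

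\emph{Two-cells.} There is one square for each unordered antipodal pair of edge midpoints, and squares with coinciding boundary cycles are explicitly kept as distinct cells. Hence \(c_2=B(P)\), counted exactly as in the definition of \(B(P)\).

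The only step that needs care is checking that \(X(P)\) really is a CW complex, i.e.\ that each square's attaching path \(v-w-v'-w'-v\) runs along one-cells of \(G_A(P)\). So I must show that \(vw\), \(wv'\), \(v'w'\) and \(w'v\) are all antipodal pairs of distinct vertices.

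Suppose the midpoints of \(e=vv'\) and \(f=ww'\) are witnessed by opposite supporting planes \(H_u\) and \(H_{-u}\). A supporting plane that contains an interior point of a segment of \(P\) contains the whole segment. So \(e\subset H_u\) and \(f\subset H_{-u}\), and the same pair of planes witnesses every vertex pair \((x,y)\) with \(x\in\{v,v'\}\) and \(y\in\{w,w'\}\).

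The four vertices are distinct, since the two edges are disjoint, as noted after the definition. Hence all four sides of the attaching cycle are edges of \(G_A(P)\), and the attaching map is a genuine cellular loop in the one-skeleton.

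With this verified, \(\chi(X(P))=V(P)-A(P)+B(P)\) follows immediately from the cell count.
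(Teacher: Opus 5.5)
Your proposal is correct and matches the paper's proof, which simply counts the $V(P)$ zero-cells, $A(P)$ one-cells and $B(P)$ two-cells of $X(P)$. Your extra check that each attaching cycle runs along edges of $G_A(P)$ is sound: a supporting plane through an edge midpoint contains the whole edge, so all four sides are antipodal vertex pairs. The paper treats this as part of the construction.
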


\begin{proof}
The complex has $V(P)$ zero-cells, $A(P)$ one-cells, and $B(P)$ two-cells by construction.
\end{proof}

\begin{lemma}[Connectedness of the antipodal graph]\label{lem:connected}
For every convex polyhedron $P$, the antipodal graph $G_A(P)$ is connected.
\end{lemma}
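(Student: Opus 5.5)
We must prove that the antipodal graph $G_A(P)$ is connected. The plan is to move a direction continuously over the sphere and track the pair of support faces it selects. Whenever the direction crosses from one cell to an adjacent one, we show the two vertex pairs involved can be joined by antipodal edges in the graph.

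For every generic direction $u\in S^2$, meaning $u$ lies in an open two-cell of the common refinement $\mathcal C(P)=\mathcal N(P)\wedge(-\mathcal N(P))$, both $F_P(u)$ and $F_P(-u)$ are vertices. Write $x(u)=F_P(u)$ and $y(u)=F_P(-u)$. These are distinct vertices, since the two opposite supporting planes of a full-dimensional polyhedron are distinct parallel planes. So $x(u)y(u)$ is an edge of $G_A(P)$.

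\textbf{Step 1: every vertex lies on an edge of $G_A(P)$.} Given a vertex $v$, choose $u$ in the interior of $N^+(v)$, generic for $\mathcal C(P)$. This is possible because the interior of $N^+(v)$ is a nonempty open set and the non-generic directions form a finite union of arcs. Then $v=x(u)$, so $v$ is an endpoint of the edge $x(u)y(u)$.

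\textbf{Step 2: edges over adjacent regions are connected.} Take two generic directions $u$ and $u'$. Since $S^2$ is connected and $\mathcal C(P)$ is a finite cell decomposition, we can join $u$ to $u'$ by a path that avoids all zero-cells and meets the one-cells transversally at finitely many points. It then suffices to treat a single crossing: a generic point $u_0$ on an open one-cell separating the two adjacent two-cells. At $u_0$ the support pair is $(F_P(u_0),F_P(-u_0))$. The cell is one-dimensional, so one of these faces is an edge and the other is a vertex.
\begin{itemize}
\item Say $F_P(u_0)=[a,b]$ and $F_P(-u_0)=w$. The two sides of the crossing carry the antipodal pairs $(a,w)$ and $(b,w)$. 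These share the vertex $w$, so the two edges $aw$ and $bw$ of $G_A(P)$ are adjacent.
\item The case where $F_P(-u_0)$ is the edge is symmetric.
\end{itemize}
In either case, the labels on the two sides are both of the form $(F_P(u),F_P(-u))$ with $F_P(u)\subseteq F_P(u_0)$ and $F_P(-u)\subseteq F_P(-u_0)$, by upper semicontinuity of support faces. So one vertex of the pair stays fixed across the crossing.

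\textbf{Step 3: conclude.} Chaining the crossings along the path gives a walk in $G_A(P)$ from the edge $x(u)y(u)$ to the edge $x(u')y(u')$. Combined with Step 1, every vertex of $P$ lies in a single component, so $G_A(P)$ is connected.

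The main obstacle is the justification in Step 2 that the path can be chosen to cross only open one-cells. This follows because the zero-cells of $\mathcal C(P)$ form a finite set. Removing finitely many points from $S^2$ leaves it connected, and a generic piecewise-geodesic path meets the one-skeleton transversally at finitely many points.
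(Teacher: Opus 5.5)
Your Step 2 fails for non-generic polyhedra. It rests on the claim that every open one-cell of $\mathcal C(P)$ is labelled by an edge and a vertex, so that one vertex of the pair stays fixed across each crossing. That holds only in transverse position. A one-cell is any one-dimensional nonempty set $N^\circ(F)\cap -N^\circ(G)$. Besides the (edge, vertex) case, this also happens when $F,G$ are two edges whose open normal arcs overlap along a common great circle. Parallel antipodal edges do this, which is exactly the degenerate situation the lemma must cover.

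In the cube $[-1,1]^3$, take $F=[(-1,1,1),(1,1,1)]$ and $G=[(-1,-1,-1),(1,-1,-1)]$. Then $N^\circ(F)=-N^\circ(G)=\{(0,\cos\theta,\sin\theta):0<\theta<\pi/2\}$, a one-cell labelled $(F,G)$. Cross it at $u_0=(0,1,1)/\sqrt2$ in the $x$-direction. The generic pair changes from $\{(1,1,1),(-1,-1,-1)\}$ to $\{(-1,1,1),(1,-1,-1)\}$. These two edges of $G_A(P)$ are vertex-disjoint, so your chaining does not produce a walk as written. The obstacle you flagged, avoiding zero-cells, is not where the difficulty lies.

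The repair uses the upper semicontinuity you already state. The direction $u_0$ itself witnesses that every vertex of $F_P(u_0)$ is antipodal to every vertex of $F_P(-u_0)$. So $G_A(P)$ contains the complete bipartite graph on $\Vertices(F_P(u_0))$ and $\Vertices(F_P(-u_0))$. This graph is connected because both parts are nonempty, and they are disjoint since they lie on distinct parallel planes. By semicontinuity, the generic pairs on both sides of the crossing are edges of this bipartite graph, so they lie in one component. With this observation the case analysis disappears, and you no longer need to avoid zero-cells. It is the same local fact the paper later uses in the directed vanishing theorem: the fibers of $q$ are connected over the connected sphere.

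For comparison, the paper's proof of this lemma does not use the sphere at all. For each edge $uv$ of the ordinary graph of $P$, it takes a vertex $w$ on the support plane opposite one containing $uv$, which gives a path $u-w-v$ in $G_A(P)$. Connectivity of the edge graph of $P$ then finishes the argument.
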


\begin{proof}
Let $uv$ be an edge of the ordinary one-skeleton of $P$.  Choose a supporting plane whose intersection with $P$ contains $uv$.  A parallel supporting plane on the opposite side of $P$ contains at least one vertex $w$.  Then both pairs $u,w$ and $v,w$ are antipodal.  Hence the edge $uv$ is replaced in $G_A(P)$ by a path
\[
        u-w-v.
\]
Since the one-skeleton of a convex polyhedron is connected, so is $G_A(P)$.
\end{proof}

\begin{proposition}[Homological form of the defect]\label{prop:defect-homology}
For every convex polyhedron $P\subset\mathbb R^3$,
\[
\boxed{
        \delta(P)=\beta_2(X(P);\mathbb Q)-\beta_1(X(P);\mathbb Q).
}
\]
\end{proposition}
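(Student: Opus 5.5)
The identity follows from combining the Euler characteristic computation of Lemma~\ref{lem:euler-square} with the connectedness statement of Lemma~\ref{lem:connected}. Since \(X(P)\) is a finite two-dimensional CW complex, its Euler characteristic equals the alternating sum of its rational Betti numbers:
\[
        \chi(X(P))=\beta_0(X(P);\mathbb Q)-\beta_1(X(P);\mathbb Q)+\beta_2(X(P);\mathbb Q).
\]
Higher rational homology vanishes because there are no cells of dimension three or more.

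The plan is as follows. First, invoke Lemma~\ref{lem:euler-square} to write \(\chi(X(P))=V(P)-A(P)+B(P)\). Second, compute \(\beta_0\). The \(0\)- and \(1\)-skeleton of \(X(P)\) is exactly the antipodal graph \(G_A(P)\), and attaching \(2\)-cells does not change the set of path components. Lemma~\ref{lem:connected} therefore shows that \(X(P)\) is connected, so \(\beta_0(X(P);\mathbb Q)=1\). Third, substitute into the Euler relation to obtain
\[
        V(P)-A(P)+B(P)=1-\beta_1+\beta_2 .
\]
Subtracting \(1\) from both sides gives \(\delta(P)=V(P)-1-A(P)+B(P)=\beta_2-\beta_1\), which is the boxed formula.

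The only point needing care is the use of the Euler--Poincar\'e formula for a CW complex whose attaching maps may be degenerate. In particular, distinct squares may share the same boundary cycle. The formula is nevertheless valid, because it depends only on the cell counts and the cellular chain complex over \(\mathbb Q\): rank--nullity gives \(\sum(-1)^i\dim C_i=\sum(-1)^i\dim H_i\). The definition of \(X(P)\) already records that each attaching cycle has four distinct vertices, so every square is a genuine cell. I do not expect any real obstacle here. The substantial content, namely that \(\beta_1=0\) over \(\mathbb Q\), is deferred to the main homology theorem; this proposition only isolates the Euler-characteristic bookkeeping.
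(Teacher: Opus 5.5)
Your proof is correct and follows essentially the same route as the paper: connectedness of $G_A(P)$ (hence of $X(P)$) gives $\beta_0=1$, and comparing the Euler--Poincar\'e formula with the cell count $\chi(X(P))=V(P)-A(P)+B(P)$ yields $\delta(P)=\beta_2-\beta_1$. Your extra remark that rank--nullity on the cellular chain complex makes the argument insensitive to repeated boundary cycles is a harmless clarification that the paper leaves implicit.
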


\begin{proof}
By Lemma~\ref{lem:connected}, $X(P)$ is connected, so $\beta_0(X(P);\mathbb Q)=1$.  Hence
\[
        \chi(X(P))
        =1-\beta_1(X(P);\mathbb Q)+\beta_2(X(P);\mathbb Q).
\]
On the other hand, by Lemma~\ref{lem:euler-square},
\[
        \chi(X(P))=V(P)-A(P)+B(P).
\]
Subtracting $1$ gives
\[
        V(P)-1-A(P)+B(P)
        =\beta_2(X(P);\mathbb Q)-\beta_1(X(P);\mathbb Q).
\]
\end{proof}

\begin{remark}[From the weak target to the strong theorem]
Proposition~\ref{prop:defect-homology} shows that the nonnegativity of the defect is equivalent to the inequality
\[
        \beta_2(X(P);\mathbb Q)\ge \beta_1(X(P);\mathbb Q).
\]
A priori this is weaker than asking for
\[
        H_1(X(P);\mathbb Q)=0.
\]
The support blow-up argument in Section~\ref{sec:support-blowup} proves a stronger field-valued directed vanishing theorem, and then the integral homology of $X(P)$ is computed from the resulting two-sheeted cover.  Consequently, for every convex polyhedron,
\[
        \delta(P)=\beta_2(X(P);\mathbb Q).
\]
\end{remark}

\section{Examples}

This section records only hand-verifiable examples.  The examples are not used in the proof of the main theorem; they are included to show how the invariant behaves in basic cases.

\begin{example}[Tetrahedron]\label{ex:tetra}
Let $P$ be a tetrahedron.  Every pair of distinct vertices is antipodal: given two vertices $v,w$, one may choose a linear functional which is maximized at $v$ and minimized at $w$.  Hence
\[
        A=\binom{4}{2}=6.
\]
The antipodal pairs of edge midpoints are exactly the three pairs of opposite edges.  Thus
\[
        B=3.
\]
Since $V=4$,
\[
        \delta(P)=V-1-A+B=4-1-6+3=0.
\]
Thus a tetrahedron has zero antipodal defect.
\end{example}

\begin{example}[The cube]\label{ex:cube}
Let $P=[-1,1]^3$.  We claim that
\[
        V=8,
        \qquad
        A=28,
        \qquad
        B=42,
        \qquad
        \delta(P)=21.
\]
First, every pair of distinct vertices of the cube is antipodal.  Indeed, if $p,q\in\{\pm1\}^3$ are distinct, choose a vector $u$ whose $i$-th coordinate is positive when $p_i=1,q_i=-1$, negative when $p_i=-1,q_i=1$, and zero when $p_i=q_i$.  Then $p\in F_P(u)$ and $q\in F_P(-u)$.  Hence
\[
        A=\binom{8}{2}=28.
\]

It remains to count antipodal pairs of edge midpoints.  The cube has four edges in each of the three coordinate directions.  Consider first two edges in the same direction, say the $x$-direction.  Such an edge is determined by fixed signs $(y,z)=(a,b)$ with $a,b\in\{\pm1\}$.  Its normal cone on the sphere is generated by the two coordinate normals $a e_y$ and $b e_z$.  Two distinct $x$-parallel edges have sign pairs $(a,b)$ and $(c,d)$; since the sign pairs are distinct, either $a=-c$ or $b=-d$, and the two edge normal cones have opposite directions in common.  Thus all six pairs among the four $x$-edges are antipodal.  The same holds in the $y$- and $z$-directions, contributing
\[
        3\binom{4}{2}=18
\]
pairs.

Now consider edges in different directions, say an $x$-edge and a $y$-edge.  An $x$-edge has fixed signs $(y,z)=(a,b)$, while a $y$-edge has fixed signs $(x,z)=(c,d)$.  Their normal cones can meet oppositely only in the $z$-direction, and this happens exactly when $b=-d$.  Hence exactly half of the $4\cdot4=16$ such pairs are antipodal, giving $8$ pairs for the direction pair $(x,y)$.  There are three unordered pairs of coordinate directions, contributing
\[
        3\cdot 8=24
\]
additional pairs.  Therefore
\[
        B=18+24=42.
\]
Consequently
\[
        \delta(P)=8-1-28+42=21.
\]
The cube therefore shows that the C7 equality need not persist outside generic position.
\end{example}

\begin{example}[The regular octahedron]\label{ex:octahedron}
Let \(P\) be the regular octahedron.  Its four pairs of opposite triangular facets are exact facet--facet pairs.  Each pair contributes
\[
        3+3-1=5
\]
by Theorem~\ref{thm:exact-local-defect}.  Thus
\[
        \delta(P)=4\cdot5=20.
\]
Equivalently, the octahedron has \(V=6\), every pair of distinct vertices is antipodal, so \(A=\binom62=15\), and hence \(B=30\).
\end{example}

\begin{remark}[Why examples are kept modest]
Further examples can be investigated by normal-cone feasibility and cellular homology.  Since the present note is written as a proof rather than a computational report, we include only examples whose counts are transparent by hand.
\end{remark}

\section{Difference bodies and the representation viewpoint}\label{sec:difference-body}

The second official solution of C7 uses the difference body
\[
        D=P-P=\{x-y:x,y\in P\}.
\]
This viewpoint is not merely another proof of the generic case.  It gives a useful way to locate degeneracy: outside generic position, a boundary point of $D$ may have a positive-dimensional family of representations as a difference of two antipodal points of $P$.  Families of fibers of linear projections of polytopes are studied systematically in the theory of fiber polytopes \cite{BilleraSturmfels1992}.  Here we use only the geometry of the individual fibers of the difference map, not the fiber-polytope construction itself.

We keep the notation
\[
        h_P(u)=\max_{x\in P}\langle u,x\rangle,
        \qquad
        F_P(u)=\{x\in P:\langle u,x\rangle=h_P(u)\}.
\]

\begin{proposition}[Antipodality and the boundary of the difference body]\label{prop:boundary-difference}
Let $P\subset\mathbb R^3$ be a convex polyhedron and let $x,y\in P$.  Then $x$ and $y$ are antipodal if and only if
\[
        x-y\in \partial(P-P).
\]
\end{proposition}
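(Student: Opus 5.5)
The plan is to use the support-function description of the difference body.  First I record the identity
\[
        h_{P-P}(u)=h_P(u)+h_P(-u)\qquad(u\in S^2),
\]
which follows directly from the definition of the Minkowski sum: \(\max_{x,y\in P}\langle u,x-y\rangle=\max_{x}\langle u,x\rangle+\max_y\langle -u,y\rangle\).  Since \(P\) is full-dimensional, \(D=P-P\) is a full-dimensional compact convex body containing the origin in its interior.  For such a body, a point \(z\in D\) lies in \(\partial D\) exactly when some unit vector \(u\) satisfies \(\langle u,z\rangle=h_D(u)\).  This is the supporting hyperplane theorem in one direction, and in the other direction the trivial observation that a point on a supporting plane cannot be interior.

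For the forward implication, suppose \(x,y\) are antipodal.  Then there is a direction \(u\) with \(x\in F_P(u)\) and \(y\in F_P(-u)\), because the two parallel supporting planes have a common normal \(u\) pointing from the plane through \(y\) toward the plane through \(x\).  Hence
\[
        \langle u,x-y\rangle=h_P(u)+h_P(-u)=h_D(u),
\]
so \(x-y\) lies on a supporting plane of \(D\) and therefore in \(\partial D\).

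For the converse, suppose \(x-y\in\partial D\).  I choose a supporting direction \(u\) with \(\langle u,x-y\rangle=h_D(u)\).  Since \(\langle u,x\rangle\le h_P(u)\) and \(-\langle u,y\rangle\le h_P(-u)\), while their sum equals \(h_P(u)+h_P(-u)\), both inequalities must be equalities.  Thus \(x\in F_P(u)\) and \(y\in F_P(-u)\), and the planes \(\langle u,\cdot\rangle=h_P(u)\) and \(\langle u,\cdot\rangle=-h_P(-u)\) are parallel supporting planes containing \(P\) between them.  They are distinct because \(h_P(u)+h_P(-u)>0\) by full-dimensionality.

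The only step needing care is the characterization of \(\partial D\) via supporting planes, which is standard convexity.  No step is a real obstacle.  One point to note is the degenerate case \(x=y\): then \(x-y=0\) is interior to \(D\), so \(x\) and \(y\) are not antipodal, consistent with the convention that antipodal pairs consist of distinct points.
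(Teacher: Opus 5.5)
Your proposal is correct and matches the paper's proof: it uses the support-function identity $h_{P-P}(u)=h_P(u)+h_P(-u)$, evaluates it on $x-y$ for the forward direction, and for the converse applies the same equality-forcing argument to the inequalities $\langle u,x\rangle\le h_P(u)$ and $\langle -u,y\rangle\le h_P(-u)$. Your added remarks are sound and simply make explicit what the paper leaves implicit: that boundary points of $P-P$ are exactly the points on supporting planes, that the two planes are distinct, and that the case $x=y$ is excluded.
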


\begin{proof}
Let $D=P-P$.  Suppose first that $x$ and $y$ are antipodal.  Then there exists a nonzero vector $u$ such that
\[
        x\in F_P(u),\qquad y\in F_P(-u).
\]
Hence
\[
        \langle u,x-y\rangle
        =h_P(u)+h_P(-u)
        =h_D(u),
\]
so $x-y\in F_D(u)\subset\partial D$.

Conversely, suppose that $x-y\in\partial D$.  Then there exists $u\ne0$ such that
\[
        \langle u,x-y\rangle=h_D(u).
\]
Since $D=P+(-P)$, its support function satisfies
\[
        h_D(u)=h_P(u)+h_P(-u).
\]
On the other hand,
\[
        \langle u,x\rangle\le h_P(u),
        \qquad
        \langle -u,y\rangle\le h_P(-u).
\]
The equality for the sum can hold only if both inequalities are equalities.  Thus
\[
        x\in F_P(u),\qquad y\in F_P(-u),
\]
so $x$ and $y$ lie on two opposite supporting planes of $P$.
\end{proof}

\begin{proposition}[Support faces of the difference body]\label{prop:face-difference-body}
For every nonzero vector $u$,
\[
        F_{P-P}(u)=F_P(u)-F_P(-u).
\]
\end{proposition}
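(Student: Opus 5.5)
We prove the two inclusions \(F_{P-P}(u)\subseteq F_P(u)-F_P(-u)\) and \(F_P(u)-F_P(-u)\subseteq F_{P-P}(u)\) directly from support functions, reusing the computation in the proof of Proposition~\ref{prop:boundary-difference}. Put \(D=P-P\). The first step records the additivity of support functions under Minkowski sums:
\[
        h_D(u)=\max_{x,y\in P}\bigl(\langle u,x\rangle+\langle -u,y\rangle\bigr)=h_P(u)+h_P(-u).
\]
This holds because the maximum over the product \(P\times P\) splits into independent maxima over \(x\) and over \(y\).

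For the inclusion \(F_P(u)-F_P(-u)\subseteq F_D(u)\), take \(x\in F_P(u)\) and \(y\in F_P(-u)\). Then
\[
        \langle u,x-y\rangle=h_P(u)+h_P(-u)=h_D(u).
\]
Since \(x-y\in D\), this gives \(x-y\in F_D(u)\).

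For the reverse inclusion, let \(z\in F_D(u)\). By definition of \(D\) we can write \(z=x-y\) with \(x,y\in P\); this representation need not be unique, but any choice works. We have the two bounds \(\langle u,x\rangle\le h_P(u)\) and \(\langle -u,y\rangle\le h_P(-u)\). Their sum is \(\langle u,z\rangle=h_D(u)=h_P(u)+h_P(-u)\), so both inequalities must be equalities. Hence \(x\in F_P(u)\) and \(y\in F_P(-u)\), and therefore \(z\in F_P(u)-F_P(-u)\).

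There is no substantive obstacle here. The only point needing care is that the reverse inclusion applies to every representation \(z=x-y\), not merely to one of them. This observation is worth recording, because it shows that the fiber of the difference map over \(z\in F_D(u)\) lies entirely in \(F_P(u)\times F_P(-u)\), which is presumably how the proposition will be used later.
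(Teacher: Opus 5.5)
Your proof is correct and is essentially the paper's argument. The paper cites the standard Minkowski-sum face formula $F_{A+B}(u)=F_A(u)+F_B(u)$ together with $F_{-P}(u)=-F_P(-u)$, and your two-inclusion argument (support-function additivity, then the equality case of the summed inequalities) is that formula's proof written out for $A=P$, $B=-P$, using the same computation as in Proposition~\ref{prop:boundary-difference}.
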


\begin{proof}
This is the standard face formula for Minkowski sums:
\[
        F_{A+B}(u)=F_A(u)+F_B(u).
\]
Since $P-P=P+(-P)$ and
\[
        F_{-P}(u)=-F_P(-u),
\]
we get
\[
        F_{P-P}(u)=F_P(u)+F_{-P}(u)=F_P(u)-F_P(-u).
\]
\end{proof}

\begin{definition}[Difference representation space]\label{def:difference-representation-space}
Define
\[
        \mathcal R(P)=\{(x,y)\in P\times P:x-y\in\partial(P-P)\}.
\]
By Proposition~\ref{prop:boundary-difference}, this is equivalently the space of ordered antipodal pairs of points of $P$.  Let
\[
        \pi:\mathcal R(P)\longrightarrow \partial(P-P),
        \qquad
        \pi(x,y)=x-y
\]
be the difference map.
\end{definition}

\begin{proposition}[Convex fibers of the difference map]\label{prop:convex-fibers}
The map
\[
        \pi:\mathcal R(P)\to\partial(P-P)
\]
is a surjective PL map.  For every $z\in\partial(P-P)$, the fiber
\[
        \pi^{-1}(z)=\{(x,y)\in P\times P:x-y=z\}
\]
is a nonempty compact convex polytope.  In particular, every fiber is contractible.
\end{proposition}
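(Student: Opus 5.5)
The plan is to realize \(\mathcal R(P)\) as a polyhedral subset of \(P\times P\), namely a finite union of products of faces, and to read off all three claims from that description together with Propositions~\ref{prop:boundary-difference} and~\ref{prop:face-difference-body}.

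\emph{Structure of \(\mathcal R(P)\).} By Proposition~\ref{prop:boundary-difference}, a pair \((x,y)\) lies in \(\mathcal R(P)\) if and only if \(x\in F_P(u)\) and \(y\in F_P(-u)\) for some \(u\ne0\). The pair \((F_P(u),F_P(-u))\) ranges over finitely many face pairs, namely the labels of cells of the common refinement \(\mathcal C(P)\). Hence
\[
        \mathcal R(P)=\bigcup_{(F,G)\ \mathrm{exact}} F\times G,
\]
a finite union of polytopes, so it is a compact polyhedron in \(\mathbb R^6\). The map \(\pi(x,y)=x-y\) is the restriction of a linear map, hence PL. For surjectivity, take \(z\in\partial(P-P)\) and choose an outer normal \(u\) with \(z\in F_{P-P}(u)\). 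Proposition~\ref{prop:face-difference-body} writes \(z=x-y\) with \(x\in F_P(u)\) and \(y\in F_P(-u)\), and then \((x,y)\in\mathcal R(P)\).

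\emph{Fibers.} Fix \(z\in\partial(P-P)\). If \(x,y\in P\) satisfy \(x-y=z\), then Proposition~\ref{prop:boundary-difference} already gives \((x,y)\in\mathcal R(P)\). So the fiber is
\[
        \pi^{-1}(z)=(P\times P)\cap\{(x,y):x-y=z\},
\]
the intersection of a polytope with an affine subspace. It is therefore a compact convex polytope, nonempty by surjectivity, and contractible since it is convex.

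\emph{Main obstacle.} The only delicate point is to justify that \(\mathcal R(P)\) is a genuine polyhedron, so that calling \(\pi\) a PL map makes sense. I would handle this with the finite face-product decomposition above rather than with any argument about closedness of \(\partial(P-P)\). Under that decomposition \(\pi\) is linear on each product \(F\times G\), which is exactly what PL requires.
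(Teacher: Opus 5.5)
Your proof is correct and follows the paper's argument: the fiber is identified as \((P\times P)\cap\{(x,y):x-y=z\}\), a polytope cut by an affine subspace, exactly as in the paper. The differences are minor.
\begin{itemize}[leftmargin=2em]
\item \emph{Polyhedral structure.} You get it from the finite decomposition \(\bigcup_{(F,G)\ \mathrm{exact}}F\times G\). The paper instead views \(\mathcal R(P)\) as the preimage of the polyhedral set \(\partial(P-P)\) inside \(P\times P\).
\item \emph{Surjectivity.} You use the Minkowski face formula. The paper uses only \(\partial(P-P)\subseteq P-P\).
\end{itemize}
Membership in \(\mathcal R(P)\) follows directly from its definition, so your appeals to Proposition~\ref{prop:boundary-difference} are unnecessary.
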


\begin{proof}
The map $(x,y)\mapsto x-y$ is linear on $P\times P$, and $\mathcal R(P)$ is the inverse image of the polyhedral set $\partial(P-P)$ under this linear map.  Thus $\pi$ is a PL map.  It is surjective because every $z\in P-P$ has at least one representation $z=x-y$, and if $z\in\partial(P-P)$ then such pairs are antipodal by Proposition~\ref{prop:boundary-difference}.

For fixed $z$, the fiber is
\[
        (P\times P)\cap\{(x,y):x-y=z\},
\]
the intersection of a compact polytope with an affine subspace.  Hence it is a compact convex polytope.  Since $z\in P-P$, it is nonempty.
\end{proof}

\begin{remark}[Convex fibers and the discrete model]\label{rem:convex-fibers-to-homotopy}
The fibers of $\pi$ have no internal topology: they are convex.  This
suggests that the boundary of the difference body is the natural continuous
parameter space behind antipodal pairs.  The proof below uses only a finite
polyhedral shadow of this picture, obtained by recording a witnessing normal
direction for each cell of the directed square complex.
\end{remark}

The support blow-up constructed in Section~\ref{sec:support-blowup} should be viewed as a discrete replacement for this representation-space picture.  Instead of mapping the directed square complex directly to $\mathcal R(P)$, we keep track of a normal direction witnessing each cell of $\Xdir(P)$.  This produces an incidence space over $S^2$ whose fibers are exactly the local directed complexes appearing in the proof of the integral theorem.

\section{The support blow-up and the integral homological theorem}\label{sec:support-blowup}

We now prove the main theorem.  The point is to replace the noncanonical PL disk patching picture by a natural incidence space over the normal sphere.  We first prove a directed vanishing theorem over every field, then convert it into the integral homology calculation for the undirected square complex.

Throughout this section $P\subset\mathbb R^3$ is a full-dimensional compact convex polyhedron.  For a nonempty face $F\le P$, write
\[
        N^+(F)=\{u\in S^2:F\subseteq F_P(u)\},
        \qquad
        N^-(F)=-N^+(F).
\]
These are closed spherical normal cells.  They are spherically convex, hence contractible whenever nonempty.

\subsection{The directed square complex}

Let $\Xdir(P)$ be the directed antipodal square complex.  Its vertices are $v^+$ and $v^-$, one pair for each vertex $v$ of $P$.  It has an edge
\[
        [v^+,w^-]
\]
for each ordered antipodal pair of distinct vertices $(v,w)$, equivalently whenever
\[
        N^+(v)\cap N^-(w)\ne\varnothing.
\]
For each ordered pair of edges $(e,f)$, where $e=vv'$ and $f=ww'$, whose midpoints are antipodal, attach a square along
\[
        v^+-w^- - v'^+ - w'^- - v^+.
\]
The involution $v^+\leftrightarrow v^-$ extends freely to $\Xdir(P)$, and the quotient is the undirected antipodal square complex $X(P)$.

\subsection{Local directed complexes}

Let $u\in S^2$, and put
\[
        F=F_P(u),\qquad G=F_P(-u).
\]
Define the local directed complex $\Xdir(F,G)$ to be the subcomplex of $\Xdir(P)$ spanned by:
\begin{itemize}[leftmargin=2em]
\item the vertices $v^+$, with $v\in\Vertices(F)$, and $w^-$, with $w\in\Vertices(G)$;
\item all edges $[v^+,w^-]$ between these two sets of vertices;
\item all squares corresponding to pairs of edges $e\subseteq F$, $f\subseteq G$.
\end{itemize}
These edges and squares are indeed cells of $\Xdir(P)$, because the same direction $u$ witnesses the corresponding antipodality.

\begin{lemma}[Local directed filling]\label{lem:local-directed-filling}
For every $u\in S^2$, with $F=F_P(u)$ and $G=F_P(-u)$, the complex $\Xdir(F,G)$ is connected and satisfies
\[
        H_1(\Xdir(F,G);\mathbb Z)=0.
\]
Consequently, $H_1(\Xdir(F,G);k)=0$ for every field $k$.
\end{lemma}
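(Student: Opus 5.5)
The plan is to identify $\Xdir(F,G)$ with the product CW complex $F^{(1)}\times G^{(1)}$ after a reindexing of cells, and then apply the K\"unneth theorem. Here $F^{(1)}$ is the vertex–edge graph of the face $F$. If $F$ is a vertex, it is a point; if $F$ is an edge, it is a segment; if $F$ is a facet, it is a polygonal cycle.

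The product $F^{(1)}\times G^{(1)}$ has three kinds of cells.
\begin{itemize}[leftmargin=2em]
\item Vertices $(v,w)$ with $v\in\Vertices(F)$ and $w\in\Vertices(G)$.
\item Edges $\{v\}\times f$ and $e\times\{w\}$.
\item Squares $e\times f$.
\end{itemize}
This is not literally $\Xdir(F,G)$, whose vertices are the $v^+$ and $w^-$ and whose edges are the pairs $[v^+,w^-]$. However, the cellular chain complex of $\Xdir(F,G)$ is exactly the chain complex already used in Proposition~\ref{prop:local-square-belt-rank}. In degree $0$ it is generated by $\Vertices(F)\sqcup\Vertices(G)$. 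In degree $1$ it is $\mathbb Z^{V(F)\times V(G)}$, which is the edge group of $K_{V(F),V(G)}$. In degree $2$ it is $C_2(F,G)$.

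I will therefore compute $H_1$ directly rather than through a product structure. The computation has two parts.

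\textbf{Connectedness.} Every vertex $v^+$ is joined to every vertex $w^-$, so $K_{V(F),V(G)}$ is connected.

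\textbf{Vanishing of $H_1$.} The cycle group $Z_1$ of $K_{V(F),V(G)}$ is the group of integer matrices indexed by $V(F)\times V(G)$ with all row and column sums zero. I will use the standard basis
\[
(\mathbf e_{v}-\mathbf e_{v_0})\otimes(\mathbf e_{w}-\mathbf e_{w_0}),\qquad v\ne v_0,\ w\ne w_0,
\]
for fixed base vertices $v_0,w_0$. This gives a $\mathbb Z$-basis of size $(v(F)-1)(v(G)-1)$. So it suffices to show that each basis element lies in the image of $\partial$.

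By the computation in Proposition~\ref{prop:local-square-belt-rank}, the image of $\partial$ is generated by the elements $(\mathbf e_{v_1}-\mathbf e_{v_0'})\otimes(\mathbf e_{w_1}-\mathbf e_{w_0'})$, one for each edge $v_0'v_1$ of $F$ and each edge $w_0'w_1$ of $G$. This image is the tensor product $I_F\otimes I_G$, mapped into $\mathbb Z^{V(F)}\otimes\mathbb Z^{V(G)}$. Here $I_F$ is the subgroup of $\mathbb Z^{V(F)}$ spanned by the edge differences of $F^{(1)}$, and $I_G$ is defined the same way for $G$.

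The one key input is that $F^{(1)}$ is connected, since a point, a segment, or a polygon is connected. Hence $I_F$ equals the full augmentation subgroup, with basis $\mathbf e_v-\mathbf e_{v_0}$. Concretely, $\mathbf e_v-\mathbf e_{v_0}$ is the telescoping sum of edge differences along a path from $v_0$ to $v$. The same holds for $I_G$.

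Expanding the two path sums bilinearly writes each basis element $(\mathbf e_v-\mathbf e_{v_0})\otimes(\mathbf e_w-\mathbf e_{w_0})$ as an integer combination of square boundaries. Therefore $\operatorname{im}\partial=Z_1$, and $H_1(\Xdir(F,G);\mathbb Z)=0$.

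The main point to handle carefully is the claim that $Z_1(K_{V(F),V(G)})$ equals the tensor product of the two augmentation lattices integrally, and not merely after passing to $\mathbb Q$. This holds because the augmentation lattice of a finite set is a direct summand of the free module with the stated basis. The zero-row/zero-column matrices are then exactly $\ker(\epsilon\otimes1)\cap\ker(1\otimes\epsilon)=\ker\epsilon_F\otimes\ker\epsilon_G$, since everything here is free.

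The degenerate cases need a brief check. If $F$ or $G$ is a vertex, there are no squares, but $Z_1=0$ because $K_{1,n}$ is a tree.

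The field statement then follows from the universal coefficient theorem. $H_0$ is free and $H_1(\,\cdot\,;\mathbb Z)=0$, so $H_1(\,\cdot\,;k)\cong H_1(\,\cdot\,;\mathbb Z)\otimes k\oplus\operatorname{Tor}(H_0,k)=0$.
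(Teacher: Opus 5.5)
Your proof is correct and takes essentially the same route as the paper. Both arguments generate the integral cycle lattice of the complete bipartite one-skeleton by four-cycles, write each four-cycle as an integral sum of elementary square boundaries by telescoping along edge paths in the connected graphs $F^{(1)}$ and $G^{(1)}$ (your bilinear expansion of path sums is exactly this), and then apply universal coefficients. Your explicit $\mathbb Z$-basis $(\mathbf e_v-\mathbf e_{v_0})\otimes(\mathbf e_w-\mathbf e_{w_0})$ usefully justifies the integral generation by four-cycles that the paper only asserts, while the opening K\"unneth plan is unnecessary, since you drop it anyway.
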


\begin{proof}
The one-skeleton of $\Xdir(F,G)$ is the complete bipartite graph
\[
        K(\Vertices(F),\Vertices(G)).
\]
It is connected because both support faces are nonempty.

If one of $F,G$ is a vertex, this graph is a star or a single edge, and hence has trivial first homology.  Assume now that both $F$ and $G$ have at least one edge.

The cycle space of a complete bipartite graph is generated over $\mathbb Z$ by four-cycles
\[
        a^+-x^- - b^+ - y^- - a^+,
\]
where $a,b\in\Vertices(F)$ and $x,y\in\Vertices(G)$.  Choose edge paths
\[
        a=a_0,a_1,\ldots,a_r=b
\]
in the one-skeleton of $F$, and
\[
        x=x_0,x_1,\ldots,x_s=y
\]
in the one-skeleton of $G$.  With coherent orientations, the four-cycle above is equal in cellular chains over $\mathbb Z$ to a sum of elementary square boundaries of the form
\[
        a_i^+ - x_j^- - a_{i+1}^+ - x_{j+1}^- - a_i^+,
        \qquad
        0\le i<r,
        \quad
        0\le j<s.
\]
Each such square exists because $a_i a_{i+1}$ is an edge of $F$ and $x_j x_{j+1}$ is an edge of $G$.  Therefore every generator of the cycle space is an integral boundary.  Hence
\[
        H_1(\Xdir(F,G);\mathbb Z)=0.
\]
The statement over any field follows by the universal coefficient theorem.
\end{proof}

\subsection{The support blow-up}

For each open cell $c$ of $\Xdir(P)$, write $|c|$ for the corresponding closed cell, and define its support set $S(c)\subseteq S^2$ as follows:
\[
        S(v^+)=N^+(v),
        \qquad
        S(w^-)=N^-(w),
\]
\[
        S([v^+,w^-])=N^+(v)\cap N^-(w),
\]
and, if $q(e,f)$ is the square associated with an ordered pair of edges $(e,f)$, set
\[
        S(q(e,f))=N^+(e)\cap N^-(f).
\]
By construction, every $S(c)$ is nonempty. It is a closed spherical convex polyhedron, hence contractible. Moreover, if $c$ is a face of $d$, then
\[
        S(d)\subseteq S(c).
\]

Define the support blow-up
\[
        \mathcal B(P)=
        \bigcup_{c\in\operatorname{Cell}(\Xdir(P))}
        |c|\times S(c)
        \subseteq \Xdir(P)\times S^2.
\]

\begin{lemma}[Polyhedral structure of the support blow-up]\label{lem:blowup-polyhedral}
The space $\mathcal B(P)$ is a compact polyhedron.  Moreover, the coordinate projections
\[
        p:\mathcal B(P)\to \Xdir(P),
        \qquad p(\xi,u)=\xi,
\]
and
\[
        q:\mathcal B(P)\to S^2,
        \qquad q(\xi,u)=u,
\]
are proper surjections.
\end{lemma}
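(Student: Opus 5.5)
The plan is to show that \(\mathcal B(P)\) is a finite union of products of compact polyhedra, and then read off the two projections directly.

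First, \(\Xdir(P)\) is a finite regular CW complex whose cells are points, segments and squares. Each closed cell \(|c|\) is a compact polyhedron, identified with its characteristic image. The attaching maps are injective because a square has four distinct vertices (as noted in the definition of \(X(P)\); the same argument applies to the directed cover). Hence \(\Xdir(P)\) carries a compact polyhedral structure. I would check this first, since in principle a square could be attached along a degenerate cycle. For the polyhedral structure, embed \(S^2\) as the boundary of a cube or octahedron via radial projection, or work with spherical polyhedra directly. Each \(S(c)\) is an intersection of finitely many closed normal cells. These cells are cut out by finitely many linear inequalities together with \(|u|=1\), so each \(S(c)\) is a compact spherical polyhedron. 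Then \(|c|\times S(c)\) is a compact polyhedron in \(\Xdir(P)\times S^2\). A finite union of compact polyhedra is a compact polyhedron: triangulate all pieces compatibly by a common subdivision, which is standard PL topology. This gives the first claim.

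Properness of \(p\) and \(q\) is then immediate. Both maps are continuous restrictions of coordinate projections to a compact space with Hausdorff targets, so preimages of compact sets are closed in a compact space and hence compact.

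For surjectivity of \(p\), take \(\xi\in\Xdir(P)\) and let \(c\) be the open cell containing it. Since \(S(c)\ne\varnothing\) by construction, we can pick any \(u\in S(c)\), and then \((\xi,u)\in\mathcal B(P)\).

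For surjectivity of \(q\), take \(u\in S^2\) and any vertex \(v\) of \(F_P(u)\). Then \(u\in N^+(v)=S(v^+)\), so \((v^+,u)\in\mathcal B(P)\).

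The only mildly delicate step is the compatibility of the polyhedral structures, namely that the spherical cells become genuine polyhedra. This is handled by the radial PL identification of \(S^2\) with the boundary of a polytope, under which the normal cells become polyhedral subsets. The monotonicity \(S(d)\subseteq S(c)\) is not needed for this lemma.
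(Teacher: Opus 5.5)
Your proposal is correct and follows essentially the same route as the paper. You give $\Xdir(P)$ and $S^2$ polyhedral structures in which every closed cell $|c|$ and every support set $S(c)$ is a subpolyhedron, conclude that the finite union of the products $|c|\times S(c)$ is a compact polyhedron, obtain properness from compactness of the domain, and get surjectivity from $S(c)\neq\varnothing$ for $p$ and from a vertex of $F_P(u)$ for $q$. The only difference is cosmetic: you make the spherical cells polyhedral by radial projection onto the boundary of a polytope, whereas the paper barycentrically subdivides the common refinement $\mathcal N(P)\wedge(-\mathcal N(P))$ and then uses a product triangulation.
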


\begin{proof}
The finite family of closed spherical polyhedra $S(c)$ is contained in the common refinement of the outward normal fan and the antipodally reflected normal fan.  Take a barycentric subdivision of that finite spherical polyhedral complex.  This gives a triangulation $T_S$ of $S^2$ in which every $S(c)$ is a subcomplex.  Likewise, after barycentrically subdividing the finite square complex $\Xdir(P)$, obtain a triangulation $T_X$ in which every closed cell $|c|$ is a subcomplex.

Use a standard product triangulation of $T_X\times T_S$, chosen compatibly on all products of faces.  For each cell $c$, the product $|c|\times S(c)$ is then a subpolyhedron, and the union defining $\mathcal B(P)$ is a finite union of subcomplexes of one common finite triangulation.  Hence $\mathcal B(P)$ is a compact polyhedron.

The coordinate projections are continuous.  They are proper because the domain is compact and the targets are Hausdorff.  The map $p$ is surjective because, for the unique open cell $c$ containing a given point $\xi$, the support set $S(c)$ is nonempty.  The map $q$ is surjective because for every $u\in S^2$ the support faces $F_P(u)$ and $F_P(-u)$ contain vertices, and the corresponding directed vertex pair belongs to the fiber over $u$.
\end{proof}

\begin{figure}[ht]
\centering
\begin{tikzpicture}[scale=0.92,every node/.style={font=\small}]
  % left directed complex block
  \node[draw,minimum width=2.0cm,minimum height=0.85cm,align=center] (X) at (-4.45,0.0) {$\Xdir(P)$};
  \draw[fill=gray!15] (-5.15,1.15)--(-4.20,1.15)--(-3.70,1.55)--(-4.20,1.95)--(-5.15,1.95)--cycle;
  \node at (-4.58,1.55) {$|c|$};

  % center blow-up box
  \node[draw,rounded corners,minimum width=3.3cm,minimum height=1.20cm,align=center] (B) at (0,0.95)
    {$\mathcal B(P)$\\[-1mm] $\displaystyle=\bigcup_c |c|\times S(c)$};

  % right sphere
  \draw[thick] (4.55,0.0) circle (0.82);
  \draw[gray!65] (3.73,0.0).. controls (4.06,0.25) and (5.04,0.25)..(5.37,0.0);
  \draw[gray!65,dashed] (3.73,0.0).. controls (4.06,-0.25) and (5.04,-0.25)..(5.37,0.0);
  \fill[gray!25] (4.27,-0.08).. controls (4.42,0.18) and (4.80,0.17)..(4.97,-0.04).. controls (4.78,-0.20) and (4.49,-0.21)..cycle;
  \node at (4.55,-1.16) {$S^2$};
  \node[anchor=west] at (5.28,0.88) {$S(c)$};

  % arrows
  \draw[-{Latex[length=2mm]},thick] (B.south west).. controls (-1.15,0.35) and (-2.85,0.10)..(-3.45,-0.02) node[midway,below] {$p$};
  \draw[-{Latex[length=2mm]},thick] (B.south east).. controls (1.15,0.35) and (2.95,0.10)..(3.73,0.00) node[midway,below] {$q$};

  % explanatory text above the diagram
  \node[align=center] at (0,2.42) {support directions on \(S^2\) attached to\\cell data in \(\Xdir(P)\)};
\end{tikzpicture}
\caption{The support blow-up associates to each cell \(c\) of the directed square complex the set \(S(c)\) of normal directions that realize that cell. The projection \(p\) has contractible fibers, while the fibers of \(q\) are local directed complexes.}
\label{fig:support-blowup}
\end{figure}

\begin{lemma}[Fibers of $p$]\label{lem:p-fibers}
For every $\xi\in\Xdir(P)$, the fiber $p^{-1}(\xi)$ is contractible.
\end{lemma}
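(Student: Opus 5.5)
The plan is to identify the fiber $p^{-1}(\xi)$ explicitly as a single support set. Fix $\xi\in\Xdir(P)$ and let $c_0$ be the unique open cell of $\Xdir(P)$ containing $\xi$. By definition of $\mathcal B(P)$,
\[
        p^{-1}(\xi)=\{\xi\}\times\bigcup_{c:\ \xi\in|c|} S(c),
\]
so everything reduces to understanding which closed cells $|c|$ contain $\xi$. The claim I would prove is that this union equals $S(c_0)$, which is nonempty and spherically convex, hence contractible.

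First, $\xi\in|c|$ holds exactly when $c_0$ is a face of $c$, that is, $c_0\le c$ in the face poset. This uses the CW structure of $\Xdir(P)$: the closed cells are unions of open cells, and an attaching map sends the boundary into lower-dimensional cells. For squares, the four attaching edges are genuine cells, and the four vertices are distinct, as noted in the definition of $X(P)$. So $|c|$ is the union of the open faces of $c$, and these are $c$ itself together with its boundary edges and vertices. Second, the monotonicity property $S(d)\subseteq S(c)$ whenever $c$ is a face of $d$, recorded just before the definition of the blow-up, gives $S(c)\subseteq S(c_0)$ for every $c\ge c_0$. Since $c_0$ itself appears in the union, the union is exactly $S(c_0)$. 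Therefore $p^{-1}(\xi)\cong S(c_0)$.

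It remains to check that $S(c_0)$ is contractible. It is nonempty by construction, since each cell exists only when some direction witnesses it. It is an intersection of closed normal cells $N^+(\cdot)$ and $N^-(\cdot)$. I must make sure this intersection never contains a pair of antipodal points, so that it is a genuinely convex spherical polygon lying in an open hemisphere rather than something degenerate. For $N^+(v)\cap N^-(w)$ this holds because $N^+(v)$ already lies in an open hemisphere: $P$ is full-dimensional, so $N^+(v)$ is a pointed cone section. A closed spherically convex subset of an open hemisphere is contractible by radial projection to an affine chart, where it becomes a compact convex set.

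The main obstacle I expect is the first step, namely justifying carefully that the closed cell $|c|$ meets exactly the open cells that are faces of $c$. This needs some care for squares whose boundary edges might coincide with other squares' boundaries; the paper explicitly allows distinct squares with the same boundary cycle. Such squares are still distinct open cells, however, so they never enter $|c|$ for $c\ne$ themselves, and the argument goes through.
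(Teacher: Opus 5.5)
Your proposal is correct and follows the paper's proof. You identify $p^{-1}(\xi)$ with $\bigcup_{\xi\in|c|}S(c)$, use that $\xi\in|c|$ forces $c_0$ to be a face of $c$ together with the monotonicity $S(c)\subseteq S(c_0)$ to collapse the union to $S(c_0)$, and conclude from nonemptiness and spherical convexity. Your extra check also helps: each $S(c)$ lies in some $N^{+}(v)$ or $N^{-}(w)$, which sits in an open hemisphere because $P$ is full-dimensional, and this makes explicit why ``closed spherically convex'' really gives contractibility, a point the paper leaves implicit.
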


\begin{proof}
Let $c_0$ be the unique open cell of $\Xdir(P)$ whose relative interior contains $\xi$.  Then
\[
        p^{-1}(\xi)=\bigcup_{\xi\in |d|} S(d).
\]
The union includes $S(c_0)$.  If $\xi\in |d|$, then $c_0$ is a face of $d$, so
\[
        S(d)\subseteq S(c_0).
\]
Therefore
\[
        p^{-1}(\xi)=S(c_0).
\]
This set is closed spherical convex and nonempty, hence contractible.
\end{proof}

\begin{lemma}[Fibers of $q$]\label{lem:q-fibers}
For every $u\in S^2$, with
\[
        F=F_P(u),\qquad G=F_P(-u),
\]
the first projection restricts to a canonical homeomorphism
\[
        q^{-1}(u)\cong \Xdir(F,G).
\]
Consequently, for every field $k$,
\[
        \widetilde H_0(q^{-1}(u);k)=0,
        \qquad
        H_1(q^{-1}(u);k)=0.
\]
\end{lemma}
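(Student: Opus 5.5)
The plan is to compute the fiber directly from the definition of $\mathcal B(P)$. By construction,
\[
        q^{-1}(u)=\bigcup_{c:\,u\in S(c)} |c|\times\{u\},
\]
so the first projection identifies $q^{-1}(u)$ with the union $Y_u$ of the closed cells $|c|$ of $\Xdir(P)$ satisfying $u\in S(c)$. This identification is a homeomorphism onto its image, since it is the restriction of the projection to a slice $\Xdir(P)\times\{u\}$. It remains to show that $Y_u=\Xdir(F,G)$.

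First I would check that the set of cells $c$ with $u\in S(c)$ is exactly the set of cells of $\Xdir(F,G)$. The key fact is that for a face $H$, $u\in N^+(H)$ iff $H\subseteq F_P(u)=F$, and $u\in N^-(H)$ iff $-u\in N^+(H)$ iff $H\subseteq F_P(-u)=G$. We then go through the cell types.
\begin{itemize}[leftmargin=2em]
\item $u\in S(v^+)$ iff $v\in\Vertices(F)$, and $u\in S(w^-)$ iff $w\in\Vertices(G)$.
\item $u\in S([v^+,w^-])$ iff $v\in F$ and $w\in G$. In that case $v\neq w$, because $F\cap G=\varnothing$ (the two supporting planes are distinct for full-dimensional $P$), so the edge exists. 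These are exactly the edges of the complete bipartite graph.
\item $u\in S(q(e,f))$ iff $e\subseteq F$ and $f\subseteq G$, which are exactly the squares of $\Xdir(F,G)$.
\end{itemize}
Conversely, every cell of $\Xdir(F,G)$ has $u$ in its support set by the same equivalences.

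Next, the collection of such cells is closed under taking faces. This follows from the monotonicity $S(d)\subseteq S(c)$ for $c$ a face of $d$, which also matches the fact that $\Xdir(F,G)$ is a subcomplex. Hence the union of the closed cells $|c|$ is precisely the subcomplex $\Xdir(F,G)$.

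The homology claims then follow immediately from Lemma~\ref{lem:local-directed-filling}: $\Xdir(F,G)$ is connected, which gives $\widetilde H_0=0$, and it has $H_1=0$ over every field $k$.

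The only delicate point is the edge case $v=w$, where a vertex lies in both $F$ and $G$ and so no edge of $\Xdir(P)$ exists. This is excluded by the disjointness of $F$ and $G$, since $h_P(u)+h_P(-u)>0$ for a full-dimensional polyhedron. Everything else is bookkeeping with the definition of the support sets.
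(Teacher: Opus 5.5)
Your proposal is correct and follows the paper's own proof: you identify $q^{-1}(u)$ with the union of the closed cells whose support set contains $u$, note that this union is a subcomplex because support sets shrink along faces, check each cell type using $u\in N^{\pm}(H)\iff H\subseteq F_P(\pm u)$, and then invoke the local filling lemma. Your explicit check that $F\cap G=\varnothing$ (so every bipartite edge $[v^+,w^-]$ is actually a cell of $\Xdir(P)$) is a small detail that the paper leaves implicit.
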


\begin{proof}
The fiber $q^{-1}(u)$ consists of all pairs $(\xi,u)$ such that $\xi\in |c|$ for some open cell $c$ with $u\in S(c)$.  The first projection identifies $q^{-1}(u)$ with
\[
        \bigcup_{u\in S(c)} |c|.
\]
This union is a subcomplex: if $c$ is a face of $d$ and $u\in S(d)$, then $S(d)\subseteq S(c)$, so $u\in S(c)$.

For vertices,
\[
        u\in S(v^+)=N^+(v)
\]
if and only if $v\in F_P(u)=F$.  Similarly,
\[
        u\in S(w^-)=N^-(w)
\]
if and only if $w\in F_P(-u)=G$.

For edges,
\[
        u\in S([v^+,w^-])
\]
if and only if $v\in F$ and $w\in G$.  Hence all bipartite edges between $\Vertices(F)^+$ and $\Vertices(G)^-$ occur.

For squares,
\[
        u\in S(q(e,f))
\]
if and only if $e\subseteq F$ and $f\subseteq G$.  Hence the squares in the fiber are exactly the squares corresponding to pairs of edges $e\subseteq F$, $f\subseteq G$.

Therefore the first projection identifies $q^{-1}(u)$ canonically with $\Xdir(F,G)$.  The stated homology properties follow from Lemma~\ref{lem:local-directed-filling}.
\end{proof}

\subsection{The topological input}

We use the following degree-one consequence of the Vietoris--Begle mapping theorem.

\begin{theorem}[Vietoris--Begle, degree-one form]\label{thm:vietoris-begle}
Let $k$ be a field, and let $f:Y\to Z$ be a proper surjective map between compact polyhedra.  Suppose that for every $z\in Z$,
\[
        \widetilde{\check H}^{0}(f^{-1}(z);k)=0,
        \qquad
        \check H^{1}(f^{-1}(z);k)=0.
\]
Then
\[
        f^*: \check H^{1}(Z;k)\longrightarrow \check H^{1}(Y;k)
\]
is an isomorphism.
\end{theorem}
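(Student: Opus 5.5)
The plan is to derive this degree-one statement from the Leray spectral sequence of \(f\) in sheaf cohomology with coefficients in the constant sheaf \(k_Y\), using the proper base change theorem to compute stalks. Since \(Y\) and \(Z\) are compact polyhedra, hence compact metrizable and locally contractible, Čech cohomology agrees with sheaf cohomology of the constant sheaf and with singular cohomology. It therefore suffices to prove that \(f^*:H^1(Z;k)\to H^1(Y;k)\) is an isomorphism in sheaf cohomology.

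First I would identify the low-degree direct images. Because \(f\) is proper, proper base change gives
\[
(R^qf_*k_Y)_z\cong \check H^q(f^{-1}(z);k).
\]
The unit map \(k_Z\to f_*k_Y\) is, on stalks, the map \(k\to \check H^0(f^{-1}(z);k)\). This map is injective because the fibers are nonempty, by surjectivity of \(f\). It is surjective because \(\widetilde{\check H}{}^0(f^{-1}(z);k)=0\), since each fiber is compact and has a single quasi-component, so its locally constant functions are constant. Hence \(k_Z\xrightarrow{\ \sim\ }f_*k_Y\). The hypothesis \(\check H^1(f^{-1}(z);k)=0\) likewise gives \(R^1f_*k_Y=0\), because that sheaf has all stalks zero.

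Next I would run the Leray spectral sequence \(E_2^{p,q}=H^p(Z;R^qf_*k_Y)\Rightarrow H^{p+q}(Y;k)\). Its five-term exact sequence reads
\[
0\to H^1(Z;f_*k_Y)\to H^1(Y;k)\to H^0(Z;R^1f_*k_Y)\to H^2(Z;f_*k_Y)\to H^2(Y;k).
\]
Since \(R^1f_*k_Y=0\), the edge map \(H^1(Z;f_*k_Y)\to H^1(Y;k)\) is an isomorphism. Composing with the isomorphism \(H^1(Z;k_Z)\cong H^1(Z;f_*k_Y)\) induced by the unit recovers exactly \(f^*\), by naturality of the edge homomorphism. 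This proves the claim.

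The only delicate point is the stalk identification together with the comparison of cohomology theories. Proper base change requires \(f\) proper between locally compact Hausdorff spaces, which holds here, and the resulting fiber cohomology is Čech (equivalently sheaf) cohomology of the fiber. This is why the hypotheses are stated with \(\check H\). For the application the fibers are subcomplexes, so Čech and singular cohomology coincide on them; the hypotheses then follow from the field-coefficient conclusions of Lemmas~\ref{lem:p-fibers} and~\ref{lem:q-fibers} by universal coefficients. One could alternatively cite Begle's original theorem, in the form where acyclicity in degrees \(\le n\) gives an isomorphism in degrees \(\le n\). I expect no real obstacle beyond keeping these standard identifications straight.
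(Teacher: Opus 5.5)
Your argument is correct, and it takes a different route from the paper only in the sense that the paper gives no argument at all. The paper quotes the statement as the case $n=2$ of the Vietoris--Begle mapping theorem, citing Begle, Bredon and Iversen. Its only addition is the remark that \v{C}ech, singular and cellular cohomology agree on compact polyhedra.

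What you wrote is essentially the standard sheaf-theoretic proof behind those references, specialized to degree one:
\begin{enumerate}
\item Proper base change identifies the stalks of $R^qf_*k_Y$ with the \v{C}ech cohomology of the fibers.
\item Surjectivity of $f$ together with $\widetilde{\check H}{}^0=0$ gives $k_Z\cong f_*k_Y$, and $\check H^1=0$ on the fibers gives $R^1f_*k_Y=0$.
\item The five-term exact sequence of the Leray spectral sequence, plus naturality of the edge map, then identifies the isomorphism with $f^*$.
\end{enumerate}
(In step 2 you do not need quasi-components. The hypothesis says directly that $\check H^0$ of each nonempty fiber is $k$, spanned by the constants.)

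Compared with the bare citation, your version makes visible what is actually used:
\begin{itemize}
\item only properness of $f$ between locally compact Hausdorff spaces, with the polyhedral hypothesis entering solely through the comparison of cohomology theories, as in the paper's remark;
\item only the two facts $f_*k_Y\cong k_Z$ and $R^1f_*k_Y=0$.
\end{itemize}
It also gives injectivity of $f^*$ on $H^2$ at no extra cost. The citation buys brevity and the general statement in all degrees $<n$.

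For the application, the hypotheses on the fibers of $p$ and $q$ follow as you say. Those fibers are compact polyhedra, so \v{C}ech and singular cohomology agree on them, and universal coefficients over a field convert $H_1=0$ into $H^1=0$.
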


\begin{remark}
This is the case $n=2$ of the Vietoris--Begle mapping theorem in \v{C}ech cohomology: if the reduced \v{C}ech cohomology of every fiber vanishes in all degrees $<n$, then $f^*$ is an isomorphism in degrees $<n$.  We only use the case $n=2$.  See, for instance, Begle~\cite{Begle1950}, Bredon~\cite{Bredon1997}, and Iversen~\cite{Iversen1986}; for a related homotopy form, see Smale~\cite{Smale1957}.

We use this theorem only for compact polyhedra.  In that category \v{C}ech, singular, simplicial, and cellular cohomology agree.  Thus the conclusion may be read as an isomorphism
\[
        f^*:H^{1}(Z;k)\longrightarrow H^{1}(Y;k)
\]
in ordinary singular cohomology.  No assertion about singular homology for arbitrary compact metric spaces is needed.
\end{remark}

\subsection{Field-valued directed vanishing}

\begin{theorem}[Directed vanishing over all fields]\label{thm:directed-field}
The directed complex $\Xdir(P)$ is connected. Moreover, for every field $k$,
\[
        H_1(\Xdir(P);k)=0.
\]
\end{theorem}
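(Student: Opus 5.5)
The strategy is to compare $\Xdir(P)$ with $S^2$ through the support blow-up $\mathcal B(P)$, using both coordinate projections of Lemma~\ref{lem:blowup-polyhedral} and Theorem~\ref{thm:vietoris-begle} twice.

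We begin with the projection $p:\mathcal B(P)\to\Xdir(P)$. By Lemma~\ref{lem:p-fibers} every fiber is a nonempty closed spherically convex set, hence contractible. So all reduced \v{C}ech cohomology of the fibers vanishes over any field $k$, and Theorem~\ref{thm:vietoris-begle} gives an isomorphism $p^*:H^1(\Xdir(P);k)\to H^1(\mathcal B(P);k)$. The same Vietoris--Begle statement in degree zero, or directly the connectedness of the fibers together with surjectivity and properness, also gives $H^0(\Xdir(P);k)\cong H^0(\mathcal B(P);k)$.

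Next we treat $q:\mathcal B(P)\to S^2$. By Lemma~\ref{lem:q-fibers} each fiber is homeomorphic to the local complex $\Xdir(F,G)$. By Lemma~\ref{lem:local-directed-filling} this complex is connected and has $H_1=0$ over $k$. For a finite complex over a field, $H^1\cong\operatorname{Hom}(H_1,k)$, so $\check H^1=H^1=0$ and $\widetilde H^0=0$. Theorem~\ref{thm:vietoris-begle} then gives $H^1(\mathcal B(P);k)\cong H^1(S^2;k)=0$. The degree-zero part shows that $\mathcal B(P)$ is connected, since a proper surjection onto a connected space with connected fibers has connected total space. Combining the two steps, $\Xdir(P)$ is connected and $H^1(\Xdir(P);k)=0$. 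Because $\Xdir(P)$ is a finite CW complex and $k$ is a field, universal coefficients give $H_1(\Xdir(P);k)\cong H^1(\Xdir(P);k)^\vee=0$. Connectedness of $\Xdir(P)$ can also be checked directly: $p$ is a surjection from the connected space $\mathcal B(P)$.

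The main point requiring care is the hypotheses of Vietoris--Begle. All spaces involved must be compact polyhedra, which Lemma~\ref{lem:blowup-polyhedral} supplies. Fiber cohomology must be \v{C}ech cohomology; this is harmless here because the fibers of $p$ are convex spherical polyhedra and the fibers of $q$ are finite subcomplexes, so \v{C}ech and singular cohomology agree. We should also confirm that Lemma~\ref{lem:q-fibers} really identifies each fiber with the whole local complex $\Xdir(F,G)$, and not some smaller subcomplex. That lemma has already checked this cell by cell, so the remaining work is bookkeeping.
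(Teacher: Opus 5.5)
Your proposal is correct and follows the paper's proof essentially step for step. Vietoris--Begle applied to $p$ (contractible fibers $S(c_0)$) gives $H^1(\Xdir(P);k)\cong H^1(\mathcal B(P);k)$, and Vietoris--Begle applied to $q$ (fibers the local complexes $\Xdir(F,G)$, connected with vanishing $H^1$) gives $H^1(\mathcal B(P);k)\cong H^1(S^2;k)=0$; connectedness passes from the fibers of $q$ to $\mathcal B(P)$ and then through the surjection $p$ to $\Xdir(P)$, and universal coefficients turn the vanishing of $H^1$ into the vanishing of $H_1$.
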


\begin{proof}
First we prove connectedness.  The map $q:\mathcal B(P)\to S^2$ is a proper map from a compact space to a Hausdorff space, hence is closed.  Its fibers are connected by Lemma~\ref{lem:q-fibers}, and the base $S^2$ is connected.  A closed surjection with connected fibers over a connected base has connected total space, so $\mathcal B(P)$ is connected.  Since $p:\mathcal B(P)\to\Xdir(P)$ is surjective, $\Xdir(P)$ is connected.

Now let $k$ be a field.  By Lemma~\ref{lem:p-fibers}, every fiber of
\[
        p:\mathcal B(P)\to\Xdir(P)
\]
is contractible.  Applying Theorem~\ref{thm:vietoris-begle} to $p$ gives an isomorphism
\[
        H^1(\Xdir(P);k)\cong H^1(\mathcal B(P);k).
\]

For every $u\in S^2$, Lemma~\ref{lem:q-fibers} identifies $q^{-1}(u)$ with the local directed complex
\[
        \Xdir(F_P(u),F_P(-u)).
\]
By Lemma~\ref{lem:local-directed-filling}, this fiber is connected and has trivial first homology over $k$.  Since $q^{-1}(u)$ is a finite subcomplex, this is equivalent to the required \v{C}ech cohomology vanishing in degrees $0$ and $1$:
\[
        \widetilde{\check H}^{0}(q^{-1}(u);k)=0,
        \qquad
        \check H^{1}(q^{-1}(u);k)=0.
\]
Applying Theorem~\ref{thm:vietoris-begle} to $q$ gives
\[
        H^1(\mathcal B(P);k)\cong H^1(S^2;k)=0.
\]
Thus
\[
        H^1(\Xdir(P);k)=0.
\]
Since $k$ is a field, the universal coefficient theorem gives
\[
        H_1(\Xdir(P);k)=0.
\]
\end{proof}

\begin{corollary}[Integral directed vanishing]\label{cor:directed-integral}
For every convex polyhedron $P\subset\mathbb R^3$,
\[
        H_1(\Xdir(P);\mathbb Z)=0.
\]
\end{corollary}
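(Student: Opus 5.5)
The plan is to pass from the field-valued vanishing of Theorem~\ref{thm:directed-field} to integral vanishing using the universal coefficient theorem for the finite CW complex \(\Xdir(P)\). Since \(\Xdir(P)\) is a finite two-dimensional CW complex, its cellular chain complex \(C_\bullet(\Xdir(P);\mathbb Z)\) consists of finitely generated free abelian groups. Hence \(H_1(\Xdir(P);\mathbb Z)\) is a finitely generated abelian group, and we may write
\[
        H_1(\Xdir(P);\mathbb Z)\cong \mathbb Z^{r}\oplus T,
\]
where \(T\) is finite. The goal is to show that \(r=0\) and \(T=0\).

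For the free rank, take \(k=\mathbb Q\). By the universal coefficient theorem for homology, \(H_1(\Xdir(P);\mathbb Q)\cong H_1(\Xdir(P);\mathbb Z)\otimes\mathbb Q\cong\mathbb Q^r\), because the Tor term \(\operatorname{Tor}(H_0(\Xdir(P);\mathbb Z),\mathbb Q)\) vanishes. Theorem~\ref{thm:directed-field} then forces \(r=0\).

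For the torsion, take \(k=\mathbb F_p\) for each prime \(p\). The universal coefficient theorem gives an injection
\[
        H_1(\Xdir(P);\mathbb Z)\otimes\mathbb F_p\hookrightarrow H_1(\Xdir(P);\mathbb F_p)=0.
\]
Therefore \(T/pT=0\) for every prime \(p\). Since \(T\) is finite, this implies \(T=0\): a nonzero finite abelian group has a nontrivial \(p\)-primary part for some \(p\), and that part surjects onto \(\mathbb Z/p\). In fact this step does not even need the Tor term, because the tensor term alone sits injectively in the short exact sequence.

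No step is a genuine obstacle, since the substantive topology was already done in Theorem~\ref{thm:directed-field}. The one point to state carefully is finite generation of \(H_1(\Xdir(P);\mathbb Z)\), which follows from the finiteness of the cell structure. Finite generation is what rules out divisible groups such as \(\mathbb Q/\mathbb Z\), which would be invisible to every field. An alternative route is to read off the Smith normal form of the cellular boundary \(\partial_2\) and observe that vanishing over every field means \(\partial_2\) surjects onto the integral cycles modulo every prime, so its invariant factors are all \(1\). I would present the universal coefficient argument, as it is shorter.
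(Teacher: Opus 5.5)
Your proposal is correct and follows the paper's own proof essentially verbatim: you get finite generation of $H_1(\Xdir(P);\mathbb Z)$ from the finite cell structure, then apply the universal coefficient theorem with $k=\mathbb Q$ to kill the free part and with $k=\mathbb F_p$ to kill the $p$-torsion, using Theorem~\ref{thm:directed-field}. Your remark that only the injectivity of $H_1(\Xdir(P);\mathbb Z)\otimes k\to H_1(\Xdir(P);k)$ is needed, so that freeness of $H_0$ is not required, is a harmless sharpening of the paper's isomorphism statement.
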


\begin{proof}
The group $H_1(\Xdir(P);\mathbb Z)$ is finitely generated because $\Xdir(P)$ is a finite CW complex.  By the universal coefficient theorem, for every field $k$,
\[
        H_1(\Xdir(P);k)
        \cong
        H_1(\Xdir(P);\mathbb Z)\otimes k,
\]
since $H_0(\Xdir(P);\mathbb Z)$ is free.  If $H_1(\Xdir(P);\mathbb Z)$ had a nonzero free part, tensoring with $\mathbb Q$ would be nonzero.  If it had nonzero $p$-torsion, tensoring with $\mathbb F_p$ would be nonzero.  Both possibilities contradict Theorem~\ref{thm:directed-field}.  Hence $H_1(\Xdir(P);\mathbb Z)=0$.
\end{proof}

\subsection{Integral homology of the undirected square complex}

\begin{proof}[Proof of Theorem~\ref{thm:main-strong}]
The involution $v^+\leftrightarrow v^-$ sends the cell associated with an ordered pair $(v,w)$, or with an ordered pair of edges $(e,f)$, to the corresponding cell associated with $(w,v)$, respectively $(f,e)$.  No cell is fixed setwise: by convention the ordered pairs are pairs of distinct objects, and geometrically a point or an edge midpoint of a full-dimensional convex polyhedron cannot lie on both members of a pair of opposite supporting planes.  Hence the action is free and cellular, and the quotient map
\[
        \Xdir(P)\to X(P)
\]
is a connected two-sheeted covering.

Let $G=\pi_1(X(P))$, and let $H=\pi_1(\Xdir(P))$ be the subgroup corresponding to this cover.  Since the cover is connected and two-sheeted, $H\triangleleft G$ and
\[
        G/H\cong\mathbb Z/2.
\]
By Corollary~\ref{cor:directed-integral},
\[
        H^{\mathrm{ab}}=H_1(\Xdir(P);\mathbb Z)=0.
\]
Thus $H=[H,H]$, and therefore
\[
        H=[H,H]\subseteq [G,G].
\]
On the other hand, $G/H\cong\mathbb Z/2$ is abelian, so every commutator in $G$ lies in $H$; hence
\[
        [G,G]\subseteq H.
\]
Consequently $H=[G,G]$, and the covering quotient gives
\[
        H_1(X(P);\mathbb Z)=G^{\mathrm{ab}}
        =G/[G,G]
        =G/H
        \cong\mathbb Z/2.
\]

By Lemma~\ref{lem:connected}, $X(P)$ is connected, so $H_0(X(P);\mathbb Z)\cong\mathbb Z$.  Since $X(P)$ is two-dimensional, $H_i(X(P);\mathbb Z)=0$ for $i\ge3$.  Finally, $H_2(X(P);\mathbb Z)$ is the kernel of the cellular boundary map
\[
        \partial_2:C_2(X(P);\mathbb Z)\to C_1(X(P);\mathbb Z),
\]
hence is a subgroup of a free abelian group and is itself free abelian.  Its rank is $\beta_2(X(P);\mathbb Q)$.  By Proposition~\ref{prop:defect-homology} and the already proved fact that $H_1(X(P);\mathbb Q)=0$, this rank is $\delta(P)$.  Therefore
\[
        H_2(X(P);\mathbb Z)\cong\mathbb Z^{\delta(P)}.
\]
This proves all parts of the theorem.
\end{proof}

\begin{proof}[Proof of Corollary~\ref{cor:intro-defect}]
By Theorem~\ref{thm:main-strong}, $H_2(X(P);\mathbb Z)\cong\mathbb Z^{\delta(P)}$.  Hence $\delta(P)\ge0$.  Equivalently,
\[
        A(P)-B(P)\le V(P)-1.
\]
The equality $\delta(P)=\beta_2(X(P);\mathbb Q)$ follows from the same theorem.
\end{proof}

\begin{proof}[Proof of Corollary~\ref{cor:zero-defect-rp2}]
By Theorem~\ref{thm:main-strong}, the only possible difference between the integral homology of $X(P)$ and that of $\mathbb{RP}^2$ occurs in degree $2$, where
\[
        H_2(X(P);\mathbb Z)\cong\mathbb Z^{\delta(P)}.
\]
Thus $X(P)$ has the integral homology of $\mathbb{RP}^2$ if and only if $\delta(P)=0$.
\end{proof}

\begin{corollary}[Cellular independence criterion for zero defect]\label{cor:independence-zero-defect}
Let $\mathcal S(P)$ be the set of antipodal edge-midpoint pairs, and orient the corresponding square cells arbitrarily.  Then
\[
        \delta(P)=0
\]
if and only if the cellular boundaries
\[
        \{\partial s:s\in\mathcal S(P)\}\subset C_1(X(P);\mathbb Q)
\]
are linearly independent over $\mathbb Q$.
\end{corollary}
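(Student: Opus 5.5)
Since $X(P)$ is two-dimensional, I would read $H_2$ as the kernel of the cellular boundary map and then use the main theorem to identify its rank with $\delta(P)$.

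First, recall that
\[
        H_2(X(P);\mathbb Q)=\ker\bigl(\partial_2:C_2(X(P);\mathbb Q)\to C_1(X(P);\mathbb Q)\bigr),
\]
because there are no $3$-cells. The basis of $C_2(X(P);\mathbb Q)$ is exactly the set of oriented square cells $s\in\mathcal S(P)$. Distinct edge pairs giving the same attaching cycle are still distinct cells, so no two basis vectors are identified. Hence $\ker\partial_2=0$ if and only if the vectors $\partial s$ for $s\in\mathcal S(P)$ are linearly independent over $\mathbb Q$. If two squares share a boundary cycle, their boundaries are $\pm$ each other, so they are dependent; this is consistent with the claim. Reorienting squares only changes signs and does not affect independence.

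Second, Corollary~\ref{cor:intro-defect} (or Theorem~\ref{thm:main-strong}) gives $\delta(P)=\beta_2(X(P);\mathbb Q)=\dim_{\mathbb Q}\ker\partial_2$. Combining the two steps, $\delta(P)=0$ if and only if $\ker\partial_2=0$, which holds if and only if the boundaries are independent.

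There is no real obstacle here. The only point needing care is that the quantitative input $H_1(X(P);\mathbb Q)=0$ is required. Proposition~\ref{prop:defect-homology} alone would only give $\delta=\beta_2-\beta_1$, and that is what Theorem~\ref{thm:main-strong} supplies. As a consistency check, rank–nullity gives $\operatorname{rank}\partial_2=B(P)-\delta(P)=A(P)-V(P)+1$. So independence can equivalently be stated as $B(P)=A(P)-V(P)+1$, the generic C7 count.
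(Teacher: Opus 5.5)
Your proposal is correct and follows the paper's own proof. Both identify $H_2(X(P);\mathbb Q)$ with $\ker\partial_2$, since $X(P)$ has no $3$-cells, and then use Theorem~\ref{thm:main-strong} to get $\dim_{\mathbb Q}\ker\partial_2=\delta(P)$. Your remarks that the boundaries should be read as a family indexed by the cells, and that the vanishing of $H_1(X(P);\mathbb Q)$ is the essential input, are both accurate.
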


\begin{proof}
Since $X(P)$ is a two-dimensional CW complex,
\[
        H_2(X(P);\mathbb Q)=\ker\bigl(\partial_2:C_2(X(P);\mathbb Q)\to C_1(X(P);\mathbb Q)\bigr).
\]
The rank of this kernel is $\delta(P)$ by Theorem~\ref{thm:main-strong}.  Hence $\delta(P)=0$ exactly when the boundaries of the square cells are linearly independent over $\mathbb Q$.
\end{proof}

\section{Integral square-boundary structure}\label{sec:integral-structure}

The integral theorem implies more than nonnegativity of the defect.  It determines the integer structure of the full square-boundary matrix.  In this section we make this precise.

\subsection{The square-boundary lattice}

Let $G_A(P)$ be the antipodal graph.  Its integral cycle lattice is
\[
        Z_1(G_A(P);\mathbb Z)
        =\ker\bigl(\partial_1:C_1(G_A(P);\mathbb Z)\to C_0(G_A(P);\mathbb Z)\bigr).
\]
Let $\mathcal S(P)$ be the set of square cells of $X(P)$, equivalently the set of antipodal edge-midpoint pairs.  Define
\[
        L_{\mathrm{sq}}(P)=\langle\partial s:s\in\mathcal S(P)\rangle_{\mathbb Z}
        \subseteq Z_1(G_A(P);\mathbb Z).
\]
Thus $L_{\mathrm{sq}}(P)$ is the lattice of integral cycles generated by the boundaries of the square cells.

Define the parity map
\[
        \ell:Z_1(G_A(P);\mathbb Z)\longrightarrow \mathbb Z/2
\]
by
\[
        \ell\!\left(\sum_e n_e e\right)=\sum_e n_e\pmod 2.
\]
The orientation of the edges is irrelevant modulo $2$.

\begin{theorem}[Parity lattice theorem]\label{thm:parity-lattice}
For every full-dimensional convex polyhedron $P\subset\mathbb R^3$,
\[
        L_{\mathrm{sq}}(P)=\ker \ell.
\]
Equivalently, the boundaries of the square cells generate precisely the integral cycles of even total edge parity in the antipodal graph.
\end{theorem}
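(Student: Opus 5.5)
Since $X(P)$ is two-dimensional with one-skeleton $G_A(P)$, the cellular chain complex gives
\[
        H_1(X(P);\mathbb Z)=Z_1(G_A(P);\mathbb Z)/L_{\mathrm{sq}}(P).
\]
Theorem~\ref{thm:main-strong} says this quotient is $\mathbb Z/2$. So it suffices to prove two things: first, $L_{\mathrm{sq}}(P)\subseteq\ker\ell$; second, $\ell$ is surjective on $Z_1(G_A(P);\mathbb Z)$, so that $\ker\ell$ has index exactly $2$. Together these give the chain
\[
        L_{\mathrm{sq}}\subseteq\ker\ell\subsetneq Z_1,
        \qquad
        [Z_1:L_{\mathrm{sq}}]=2=[Z_1:\ker\ell],
\]
which forces $L_{\mathrm{sq}}(P)=\ker\ell$.

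The containment is immediate. Each square boundary $\partial s$ is a signed sum of exactly four edges, each with coefficient $\pm1$; the edges are distinct because the attaching cycle has four distinct vertices. Hence $\ell(\partial s)=0$, and $\ell$ is a homomorphism, so it kills all of $L_{\mathrm{sq}}$.

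For surjectivity, I need an odd cycle in $G_A(P)$. One option is to pick a vertex $v$ of a facet $F$, take the opposite support face $G=F_P(-n_F)$ and any vertex $w$ of $G$; then $w$ is adjacent in $G_A$ to every vertex of $F$. That only produces even closed walks, though, so a triangle has to come from elsewhere. The cleanest source is a generic direction: choose a linear functional and take its extreme vertices, or argue with the directed cover directly. Indeed, if every cycle of $G_A(P)$ were even, then $G_A$ would be bipartite. Then the free involution on the connected cover $\widetilde X(P)$ would disconnect it into two copies of $X(P)$, contradicting connectedness (Theorem~\ref{thm:directed-field}). Concretely, since $\widetilde X(P)$ is connected, there is a path from $v^+$ to $v^-$. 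Its image in $X(P)$ is a closed walk through $v$ of odd length, because every edge of $\widetilde X(P)$ switches the sign label. Reducing this closed walk to a cellular $1$-cycle $z$ preserves the parity of the total coefficient sum, so $\ell(z)=1$.

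The only delicate point is this last reduction: a closed walk may traverse edges repeatedly, and its chain cancels in pairs. Cancellation changes the coefficient sum by even amounts, so parity is preserved. Once $\ell$ is surjective, the index count finishes the proof. I expect this surjectivity step to be the main, though minor, obstacle; everything else is formal from Theorem~\ref{thm:main-strong}.
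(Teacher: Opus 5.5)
Your proof is correct and follows essentially the paper's argument. Both use $L_{\mathrm{sq}}(P)\subseteq\ker\ell$, the identification $Z_1(G_A(P);\mathbb Z)/L_{\mathrm{sq}}(P)\cong H_1(X(P);\mathbb Z)\cong\mathbb Z/2$, and the connectedness of the sign-switching cover $\widetilde X(P)$ to show that parity is nontrivial on cycles; the paper phrases this last step as the parity cocycle inducing a nonzero map $H_1(X(P);\mathbb Z)\to\mathbb Z/2$, while you exhibit an explicit odd closed walk (the image of a path from $v^+$ to $v^-$) and conclude by an index count.
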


\begin{proof}
Each square boundary contains four oriented edges, counted with signs.  Therefore its total edge coefficient is even modulo $2$, and hence
\[
        L_{\mathrm{sq}}(P)\subseteq \ker\ell.
\]

By definition of the square complex,
\[
        H_1(X(P);\mathbb Z)
        \cong
        Z_1(G_A(P);\mathbb Z)/L_{\mathrm{sq}}(P).
\]
By Theorem~\ref{thm:main-strong}, this quotient is isomorphic to $\mathbb Z/2$.

It remains to identify the quotient map with the parity map.  The directed double cover
\[
        \Xdir(P)\to X(P)
\]
is the cover determined by the cellular $1$-cocycle with values in $\mathbb Z/2$ that sends every edge of the antipodal graph to $1$.  Indeed, traversing any edge of $X(P)$ changes the sign sheet, while the boundary of every square has four edges, so this cocycle extends over all square cells.  Since the cover is connected, the induced homomorphism
\[
        H_1(X(P);\mathbb Z)\to \mathbb Z/2
\]
is nonzero.  As $H_1(X(P);\mathbb Z)\cong\mathbb Z/2$, it is an isomorphism.  Therefore the kernel of the parity map on $Z_1(G_A(P);\mathbb Z)$ is exactly $L_{\mathrm{sq}}(P)$.
\end{proof}

\begin{corollary}\label{cor:even-cycles}
Every even integral cycle in $G_A(P)$ is the boundary of an integral square-chain in $X(P)$.  Any two odd cycles in $G_A(P)$ represent the same nonzero class in $H_1(X(P);\mathbb Z)$, and for every odd cycle $\gamma$ one has
\[
        2\gamma\in L_{\mathrm{sq}}(P).
\]
\end{corollary}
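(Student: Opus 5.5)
The corollary follows from the Parity lattice theorem (Theorem~\ref{thm:parity-lattice}) together with the identification
\[
H_1(X(P);\mathbb Z)\cong Z_1(G_A(P);\mathbb Z)/L_{\mathrm{sq}}(P)\cong\mathbb Z/2
\]
from Theorem~\ref{thm:main-strong}. Here an integral cycle $\gamma=\sum_e n_e e$ is called even or odd according to $\ell(\gamma)$. I will deduce each of the three assertions directly from $L_{\mathrm{sq}}(P)=\ker\ell$.

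\emph{Even cycles.} Let $\gamma$ be an even integral cycle, so $\ell(\gamma)=0$. Theorem~\ref{thm:parity-lattice} puts $\gamma$ in $L_{\mathrm{sq}}(P)$. Hence $\gamma=\sum_s m_s\,\partial s$ for some integers $m_s$. The chain $\sum_s m_s s\in C_2(X(P);\mathbb Z)$ is then an integral square-chain with boundary $\gamma$.

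\emph{Odd cycles represent the same class.} Let $\gamma,\gamma'$ be odd cycles. Since $\ell$ is a homomorphism to $\mathbb Z/2$, we have $\ell(\gamma-\gamma')=1-1=0$. So $\gamma-\gamma'\in L_{\mathrm{sq}}(P)$, and $[\gamma]=[\gamma']$ in $Z_1/L_{\mathrm{sq}}=H_1(X(P);\mathbb Z)$. This class is nonzero, because $\gamma\notin\ker\ell=L_{\mathrm{sq}}(P)$.

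\emph{Twice an odd cycle.} For an odd cycle $\gamma$ we have $\ell(2\gamma)=2\ell(\gamma)=0$, so $2\gamma\in L_{\mathrm{sq}}(P)$. Equivalently, $[\gamma]$ lies in a group of order $2$.

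The only point needing care is that odd cycles actually exist, since otherwise the middle statement would be vacuous. The corollary as stated does not need existence. Still, existence follows because $H_1\cong\mathbb Z/2\neq0$ forces $\ell$ to be surjective on $Z_1$. Beyond this, the argument is pure bookkeeping on top of Theorem~\ref{thm:parity-lattice}, and I expect no real obstacle.
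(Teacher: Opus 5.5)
Your proposal is correct and follows the paper's route: the paper states this corollary without a separate proof, as an immediate consequence of $L_{\mathrm{sq}}(P)=\ker\ell$ from Theorem~\ref{thm:parity-lattice} together with $H_1(X(P);\mathbb Z)\cong Z_1(G_A(P);\mathbb Z)/L_{\mathrm{sq}}(P)\cong\mathbb Z/2$. Your three deductions are exactly that bookkeeping, and your remark that odd cycles exist (because $H_1\neq0$ forces $\ell$ to be surjective on $Z_1$) is a correct extra observation.
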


\subsection{Smith normal form}

Let
\[
        \SqBd:\mathbb Z^{\mathcal S(P)}\longrightarrow C_1(G_A(P);\mathbb Z)
\]
be the cellular boundary matrix whose columns are the oriented square boundaries.  Let
\[
        r=\rank Z_1(G_A(P);\mathbb Z)=A(P)-V(P)+1,
\]
where connectedness of $G_A(P)$ was proved in Lemma~\ref{lem:connected}.

\begin{theorem}[Smith normal form theorem]\label{thm:snf}
For every full-dimensional convex polyhedron $P\subset\mathbb R^3$,
\[
        \ker \SqBd\cong\mathbb Z^{\delta(P)},
        \qquad
        \coker \SqBd\cong \mathbb Z^{V(P)-1}\oplus\mathbb Z/2.
\]
Equivalently, if
\[
        r=A(P)-V(P)+1,
\]
then the nonzero Smith invariant factors of $\SqBd$ are
\[
        \operatorname{diag}(\underbrace{1,\ldots,1}_{r-1},2).
\]
\end{theorem}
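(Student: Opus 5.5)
The kernel statement is immediate from Theorem~\ref{thm:main-strong}. Since \(X(P)\) is two-dimensional, \(\ker\SqBd=\ker\partial_2=H_2(X(P);\mathbb Z)\), which is \(\mathbb Z^{\delta(P)}\). For the cokernel we split the target \(C_1(G_A(P);\mathbb Z)\cong\mathbb Z^{A(P)}\) using the short exact sequence
\[
0\to Z_1(G_A(P);\mathbb Z)\to C_1(G_A(P);\mathbb Z)\xrightarrow{\partial_1} B_0(G_A(P);\mathbb Z)\to 0 .
\]
By Lemma~\ref{lem:connected} the graph is connected, so \(B_0\) is the augmentation kernel in \(C_0\). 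It is free of rank \(V(P)-1\), hence the sequence splits. The image of \(\SqBd\) lies in \(Z_1\), so
\[
\coker\SqBd\cong B_0\oplus Z_1/L_{\rm sq}(P)\cong\mathbb Z^{V(P)-1}\oplus H_1(X(P);\mathbb Z)\cong\mathbb Z^{V(P)-1}\oplus\mathbb Z/2,
\]
using Theorem~\ref{thm:main-strong}, or equivalently Theorem~\ref{thm:parity-lattice}.

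For the Smith form, the rank of \(\SqBd\) equals \(B(P)-\delta(P)\). By the definition of \(\delta(P)\) this is \(A(P)-V(P)+1-1=r-1\), so there are exactly \(r-1\) nonzero invariant factors. The torsion subgroup of the cokernel of an integer matrix is \(\bigoplus_i\mathbb Z/d_i\) over the nonzero invariant factors \(d_i\). Here that torsion is \(\mathbb Z/2\), so all \(d_i\) equal \(1\) except one, which equals \(2\). The divisibility chain forces the \(2\) to be last.

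A rank check shows that the free part of the cokernel, \(A(P)-(r-1)=V(P)\), is wrong. The splitting gives free rank \((V(P)-1)+(r-\rank L_{\rm sq})\). Since \(L_{\rm sq}\) has full rank \(r\) in \(Z_1\), the \(Z_1/L_{\rm sq}\) summand contributes no free part, and the total free rank is \(V(P)-1\). Then \(\rank\SqBd=A(P)-(V(P)-1)=r\), not \(r-1\). Indeed \(B-\delta=B-V+1+A-B=A-V+1=r\). So there are \(r\) nonzero invariant factors, and the stated count \(r-1\) ones followed by a single \(2\) totals \(r\), which is consistent.

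The one delicate point is the identification of the torsion of \(\coker\SqBd\) with \(H_1(X(P);\mathbb Z)\), together with the splitting of \(B_0\). Both are routine once connectedness (Lemma~\ref{lem:connected}) and Theorem~\ref{thm:main-strong} are available. No real obstacle remains.
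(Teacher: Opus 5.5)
Your argument is correct and is essentially the paper's. Split off the free quotient \(C_1(G_A(P);\mathbb Z)/Z_1\cong\mathbb Z^{V(P)-1}\), identify \(Z_1/L_{\mathrm{sq}}(P)=H_1(X(P);\mathbb Z)\cong\mathbb Z/2\), and read off the invariant factors from \(\rank\SqBd=r\) together with torsion \(\mathbb Z/2\); reading the kernel directly as \(H_2(X(P);\mathbb Z)\) is a harmless shortcut. Delete the third paragraph's claim that \(\rank\SqBd=B(P)-\delta(P)=r-1\), which is an arithmetic slip: in fact \(B(P)-\delta(P)=A(P)-V(P)+1=r\), as your own fourth paragraph correctly recomputes.
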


\begin{proof}
The image of $\SqBd$ is exactly $L_{\mathrm{sq}}(P)$.  By Theorem~\ref{thm:parity-lattice},
\[
        Z_1(G_A(P);\mathbb Z)/\operatorname{im}\SqBd
        \cong \mathbb Z/2.
\]
Since $G_A(P)$ is connected,
\[
        C_1(G_A(P);\mathbb Z)/Z_1(G_A(P);\mathbb Z)
        \cong \operatorname{im}\partial_1
        \cong \mathbb Z^{V(P)-1}.
\]
Thus we have a short exact sequence
\[
0\longrightarrow
Z_1(G_A(P);\mathbb Z)/\operatorname{im}\SqBd
\longrightarrow
C_1(G_A(P);\mathbb Z)/\operatorname{im}\SqBd
\longrightarrow
C_1(G_A(P);\mathbb Z)/Z_1(G_A(P);\mathbb Z)
\longrightarrow 0.
\]
The right-hand group is free abelian, so this sequence splits.  Hence
\[
        \coker \SqBd
        \cong \mathbb Z^{V(P)-1}\oplus\mathbb Z/2.
\]

The rank of $\SqBd$ is
\[
        r=\rank Z_1(G_A(P);\mathbb Z)=A(P)-V(P)+1.
\]
Since
\[
        \SqBd:\mathbb Z^{B(P)}\longrightarrow \mathbb Z^{A(P)}
\]
has cokernel with free rank
\[
        A(P)-r=V(P)-1
\]
and torsion subgroup $\mathbb Z/2$, its nonzero Smith invariant factors are $r-1$ copies of $1$ and one copy of $2$.

Finally,
\[
        \rank\ker \SqBd
        =B(P)-r
        =B(P)-(A(P)-V(P)+1)
        =\delta(P).
\]
The kernel is a subgroup of a free abelian group, hence is free.  Therefore
\[
        \ker \SqBd\cong\mathbb Z^{\delta(P)}.
\]
\end{proof}

\subsection{The directed double cover}

The directed complex also has a completely determined integral homology.

\begin{theorem}[Directed cover homology]\label{thm:directed-cover-homology}
For every full-dimensional convex polyhedron $P\subset\mathbb R^3$,
\[
H_i(\Xdir(P);\mathbb Z)\cong
\begin{cases}
\mathbb Z, & i=0,\\
0, & i=1,\\
\mathbb Z^{2\delta(P)+1}, & i=2,\\
0, & i\ge3.
\end{cases}
\]
\end{theorem}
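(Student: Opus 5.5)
The plan is to read off all four groups from results already in hand, using an Euler characteristic count for the rank in degree two. Degree zero follows from the connectedness of \(\Xdir(P)\) in Theorem~\ref{thm:directed-field}, which gives \(H_0(\Xdir(P);\mathbb Z)\cong\mathbb Z\). Degree one is exactly Corollary~\ref{cor:directed-integral}. Since \(\Xdir(P)\) is a two-dimensional CW complex, \(H_i=0\) for \(i\ge3\).

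For degree two, I use that \(H_2(\Xdir(P);\mathbb Z)=\ker(\partial_2)\subseteq C_2(\Xdir(P);\mathbb Z)\). It is therefore a subgroup of a free abelian group, hence free, and only its rank needs to be computed.

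The rank comes from the cell count. The complex \(\Xdir(P)\) has \(2V(P)\) vertices, \(2A(P)\) edges (ordered antipodal vertex pairs) and \(2B(P)\) squares (ordered antipodal edge pairs). This is consistent with the free two-sheeted covering \(\Xdir(P)\to X(P)\) established in the proof of Theorem~\ref{thm:main-strong}, which gives \(\chi(\Xdir(P))=2\chi(X(P))\). By Lemma~\ref{lem:euler-square},
\[
\chi(\Xdir(P))=2V(P)-2A(P)+2B(P)=2\bigl(\delta(P)+1\bigr).
\]
On the other hand, the rational Betti numbers in degrees \(0\) and \(1\) are \(1\) and \(0\), so \(\chi(\Xdir(P))=1+\beta_2(\Xdir(P);\mathbb Q)\). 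Comparing the two expressions gives \(\beta_2=2\delta(P)+1\), and hence \(H_2(\Xdir(P);\mathbb Z)\cong\mathbb Z^{2\delta(P)+1}\).

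No step here is a genuine obstacle, since the substantive input is Corollary~\ref{cor:directed-integral}. The only points needing care are the bookkeeping that each unordered antipodal vertex or edge pair contributes exactly two cells, and that these cells are distinct. Both follow from the freeness of the involution.

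As a sanity check, I would compare with the transfer: over \(\mathbb Q\), \(H_2(\Xdir)\) should split into \(\pm1\) eigenspaces of the involution, with the \(+1\) part of rank \(\beta_2(X(P);\mathbb Q)=\delta(P)\) and the \(-1\) part of rank \(\delta(P)+1\). The \(-1\) part is the homology of \(X(P)\) with twisted coefficients. This check is not needed for the proof.
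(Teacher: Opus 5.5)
Your proposal is correct and follows the paper's proof essentially step for step. Connectedness and $H_1=0$ come from Theorem~\ref{thm:directed-field} and Corollary~\ref{cor:directed-integral}, $H_2$ is free as a kernel of cellular chains, and its rank $2\delta(P)+1$ follows from $\chi(\Xdir(P))=2\chi(X(P))=2\delta(P)+2$, where your explicit cell count $(2V,2A,2B)$ is just the covering argument written out.
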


\begin{proof}
Theorem~\ref{thm:directed-field} and Corollary~\ref{cor:directed-integral} give
\[
        H_1(\Xdir(P);\mathbb Z)=0,
\]
and $\Xdir(P)$ is connected.  Since $\Xdir(P)\to X(P)$ is a connected two-sheeted cover,
\[
        \chi(\Xdir(P))=2\chi(X(P)).
\]
By Lemma~\ref{lem:euler-square},
\[
        \chi(X(P))=V(P)-A(P)+B(P)=\delta(P)+1.
\]
Thus
\[
        \chi(\Xdir(P))=2\delta(P)+2.
\]
The complex $\Xdir(P)$ is two-dimensional, so $H_i=0$ for $i\ge3$.  Its second homology is a subgroup of the free cellular chain group $C_2(\Xdir(P);\mathbb Z)$, hence is free.  Since
\[
        \chi(\Xdir(P))=1+\rank H_2(\Xdir(P);\mathbb Z),
\]
we obtain
\[
        H_2(\Xdir(P);\mathbb Z)\cong\mathbb Z^{2\delta(P)+1}.
\]
\end{proof}

\begin{corollary}\label{cor:zero-defect-cover}
A convex polyhedron $P\subset\mathbb R^3$ has zero defect if and only if $X(P)$ has the integral homology of $\mathbb{RP}^2$.  Equivalently, $P$ has zero defect if and only if $\Xdir(P)$ has the integral homology of $S^2$.
\end{corollary}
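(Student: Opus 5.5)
The first equivalence needs no new argument. By Theorem~\ref{thm:main-strong}, $H_0(X(P);\mathbb Z)\cong\mathbb Z$ and $H_1(X(P);\mathbb Z)\cong\mathbb Z/2$ for every $P$. These already agree with $H_0$ and $H_1$ of $\mathbb{RP}^2$, and all groups in degrees $\ge3$ vanish on both sides. So the comparison reduces to degree $2$, where $H_2(X(P);\mathbb Z)\cong\mathbb Z^{\delta(P)}$ and $H_2(\mathbb{RP}^2;\mathbb Z)=0$. This is exactly Corollary~\ref{cor:zero-defect-rp2}, and I would simply cite it.

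For the second equivalence I would use Theorem~\ref{thm:directed-cover-homology}. It gives $H_0(\Xdir(P);\mathbb Z)\cong\mathbb Z$, $H_1(\Xdir(P);\mathbb Z)=0$, $H_2(\Xdir(P);\mathbb Z)\cong\mathbb Z^{2\delta(P)+1}$, and vanishing above degree $2$. The integral homology of $S^2$ is $\mathbb Z$ in degrees $0$ and $2$ and zero otherwise. Degrees $0$, $1$ and $\ge3$ therefore always match. In degree $2$ we need $\mathbb Z^{2\delta(P)+1}\cong\mathbb Z$, i.e.\ $2\delta(P)+1=1$, i.e.\ $\delta(P)=0$. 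The converse is the same computation read backwards: if $\delta(P)=0$, every group matches that of $S^2$.

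Chaining the two equivalences through $\delta(P)=0$ gives the stated equivalence between the $\mathbb{RP}^2$ condition on $X(P)$ and the $S^2$ condition on $\Xdir(P)$. There is no real obstacle; the one point to check is that free abelian groups of finite rank are isomorphic exactly when their ranks agree, so the degree-$2$ comparison is a rank equation. Consistency check: $\chi(\Xdir(P))=2\chi(X(P))$, and at zero defect this reads $2=2\cdot1$, matching $\chi(S^2)=2\chi(\mathbb{RP}^2)$.
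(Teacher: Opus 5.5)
Your proposal is correct and matches the paper. The paper gives no separate proof: it treats this corollary as immediate from Corollary~\ref{cor:zero-defect-rp2} and Theorem~\ref{thm:directed-cover-homology}, and your degree-by-degree comparison, which reduces the $S^2$ condition to $2\delta(P)+1=1$, is exactly that intended argument.
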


\begin{corollary}[Rational deck decomposition]\label{cor:deck-decomposition}
Let $\tau$ be the deck involution of $\Xdir(P)\to X(P)$.  Then
\[
        H_2(\Xdir(P);\mathbb Q)
        =H_2(\Xdir(P);\mathbb Q)^+
        \oplus
        H_2(\Xdir(P);\mathbb Q)^-,
\]
with
\[
        \dim H_2(\Xdir(P);\mathbb Q)^+=\delta(P),
        \qquad
        \dim H_2(\Xdir(P);\mathbb Q)^-=\delta(P)+1.
\]
\end{corollary}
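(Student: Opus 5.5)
The plan is to use the transfer decomposition for the free involution \(\tau\) on \(\Xdir(P)\).  Over \(\mathbb Q\), where \(2\) is invertible, the two projectors \(\tfrac12(1\pm\tau_*)\) split \(H_2(\Xdir(P);\mathbb Q)\) as the direct sum of the \(\pm1\)-eigenspaces of \(\tau_*\).  By Theorem~\ref{thm:directed-cover-homology}, the total dimension is \(2\delta(P)+1\).  So it is enough to show that the \(+1\)-eigenspace has dimension \(\delta(P)\); the \(-1\)-eigenspace then has dimension \((2\delta(P)+1)-\delta(P)=\delta(P)+1\).

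For the invariant part, I will use the standard transfer fact for a finite regular cover.  For the two-sheeted cover \(\pi:\Xdir(P)\to X(P)\) with deck group \(\{1,\tau\}\), there is a transfer \(t:H_*(X(P);\mathbb Q)\to H_*(\Xdir(P);\mathbb Q)\).  At the chain level it sends a cell to the sum of its two lifts, and it satisfies
\[
\pi_*t=2,
\qquad
t\pi_*=1+\tau_*.
\]
From these identities, \(\pi_*\) restricts to an isomorphism from \(H_2(\Xdir(P);\mathbb Q)^+\) onto \(H_2(X(P);\mathbb Q)\), with inverse \(\tfrac12 t\).  Concretely, \(\tfrac12 t\pi_*\) is the identity on invariant classes, and \(\pi_*\tfrac12 t\) is the identity downstairs.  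Theorem~\ref{thm:main-strong}, or Corollary~\ref{cor:intro-defect}, then gives \(\dim H_2(X(P);\mathbb Q)=\delta(P)\).

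An alternative way to write this down, which I may prefer for concreteness, works directly on cellular chains.  Since \(\tau\) acts freely and cellularly, the rational cellular chain complex \(C_*(\Xdir(P);\mathbb Q)\) splits as \(C^+_*\oplus C^-_*\).  The invariant subcomplex \(C^+_*\) is isomorphic to \(C_*(X(P);\mathbb Q)\) via \(\pi_*\).  Taking homology commutes with this splitting, so \(H_2^+\cong H_2(X(P);\mathbb Q)\).  As a check, the anti-invariant complex \(C^-_*\) computes homology with coefficients in the sign local system, and its Euler characteristic is also \(\chi(X(P))=\delta(P)+1\).  Together with \(H_0^-=0\) (the cover is connected) and \(H_1^-=0\) (since \(H_1(\Xdir(P);\mathbb Q)=0\)), this independently gives \(\dim H_2^-=\delta(P)+1\).

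None of the steps is a serious obstacle.  The only point requiring care is verifying that the isomorphism \(C^+_*\cong C_*(X(P);\mathbb Q)\) respects orientations of cells.  If the lifts of a square are oriented compatibly with \(\tau\), then \(\tau\) maps each oriented lift to the other with sign \(+1\), and no sign twist appears.
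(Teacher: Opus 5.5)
Your proposal is correct and follows the same route as the paper. The transfer identifies the $\tau$-invariant part of $H_2(\Xdir(P);\mathbb Q)$ with $H_2(X(P);\mathbb Q)$, which has dimension $\delta(P)$, and the total dimension $2\delta(P)+1$ from Theorem~\ref{thm:directed-cover-homology} then forces the anti-invariant part to have dimension $\delta(P)+1$; your Euler-characteristic count on the anti-invariant chain complex is a valid independent check that the paper does not include.
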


\begin{proof}
For a finite cover, transfer identifies the invariant part of rational homology with the homology of the quotient:
\[
        H_2(\Xdir(P);\mathbb Q)^+
        \cong H_2(X(P);\mathbb Q).
\]
The latter has dimension $\delta(P)$ by Theorem~\ref{thm:main-strong}.  The total dimension of $H_2(\Xdir(P);\mathbb Q)$ is $2\delta(P)+1$ by Theorem~\ref{thm:directed-cover-homology}; hence the anti-invariant dimension is $\delta(P)+1$.
\end{proof}

\subsection{The mod-two extra belt}

\begin{corollary}[Mod-two kernel dimension]\label{cor:mod2-belt}
Over $\mathbb F_2$,
\[
        \dim_{\mathbb F_2}\ker(\SqBd\otimes\mathbb F_2)=\delta(P)+1.
\]
Equivalently,
\[
        \dim_{\mathbb F_2}H_2(X(P);\mathbb F_2)=\delta(P)+1.
\]
\end{corollary}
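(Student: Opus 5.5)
The plan is to reduce both claims to the rank of $\SqBd$ over $\mathbb F_2$, read off from the Smith normal form in Theorem~\ref{thm:snf}. Since $X(P)$ is two-dimensional, the cellular chain complex gives
\[
H_2(X(P);\mathbb F_2)=\ker\bigl(\partial_2\otimes\mathbb F_2\bigr)=\ker(\SqBd\otimes\mathbb F_2),
\]
so the second equivalent statement is just a restatement of the first. It therefore suffices to compute $\dim_{\mathbb F_2}\ker(\SqBd\otimes\mathbb F_2)$.

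By Theorem~\ref{thm:snf} there are unimodular integer matrices $U,W$ with
\[
U\,\SqBd\,W=\operatorname{diag}(1,\ldots,1,2,0,\ldots,0),
\]
with $r-1$ ones, where $r=A(P)-V(P)+1$. Reducing modulo $2$ keeps $U,W$ invertible and turns the entry $2$ into $0$. Hence $\rank_{\mathbb F_2}(\SqBd\otimes\mathbb F_2)=r-1$. The domain has dimension $B(P)$, so
\[
\dim\ker(\SqBd\otimes\mathbb F_2)=B(P)-r+1=B(P)-A(P)+V(P)-1+1=\delta(P)+1.
\]

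As a cross-check, the same number comes from the universal coefficient theorem. Applied to Theorem~\ref{thm:main-strong}, it gives
\[
H_2(X;\mathbb F_2)\cong H_2(X;\mathbb Z)\otimes\mathbb F_2\oplus\operatorname{Tor}(H_1(X;\mathbb Z),\mathbb F_2)=\mathbb F_2^{\delta(P)}\oplus\mathbb F_2,
\]
which again has dimension $\delta(P)+1$. Nothing here is a genuine obstacle. The only point needing care is that the one torsion factor $2$ is exactly what causes the rank to drop by one modulo $2$, and this is already supplied by Theorem~\ref{thm:snf}.
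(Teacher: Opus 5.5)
Your proposal is correct and follows the paper's argument. The single invariant factor $2$ in the Smith form of Theorem~\ref{thm:snf} lowers the rank by one modulo $2$, so the nullity is $B(P)-(r-1)=\delta(P)+1$, and $H_2(X(P);\mathbb F_2)$ is that same cellular kernel because $X(P)$ has no $3$-cells. Your universal-coefficient cross-check via Theorem~\ref{thm:main-strong} is also valid; the paper does not spell it out.
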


\begin{proof}
The Smith form in Theorem~\ref{thm:snf} has one invariant factor equal to $2$.  Reducing modulo $2$ lowers the rank by one.  Since the rational nullity is $\delta(P)$, the mod-two nullity is $\delta(P)+1$.  The homological statement is the cellular interpretation of the same kernel.
\end{proof}

\section{Equality and primitive belts}\label{sec:primitive-belts}

\subsection{The normal-fan square-boundary matroid}

A rational antipodal belt is a rational square-chain with zero cellular boundary.

\begin{definition}[Rational antipodal belt]
Let $\mathcal S(P)$ be the set of square cells of $X(P)$.  A \emph{rational antipodal belt} is an element
\[
        Z=\sum_{s\in\mathcal S(P)}\lambda_s s\in C_2(X(P);\mathbb Q)
\]
such that
\[
        \partial Z=0.
\]
The vector space of rational antipodal belts is denoted
\[
        \operatorname{Belt}_{\mathbb Q}(P).
\]
\end{definition}

Since $X(P)$ is two-dimensional,
\[
        \operatorname{Belt}_{\mathbb Q}(P)=H_2(X(P);\mathbb Q).
\]
Thus Theorem~\ref{thm:main-strong} gives
\[
        \dim_{\mathbb Q}\operatorname{Belt}_{\mathbb Q}(P)=\delta(P).
\]

Orient the square cells and the edges of $G_A(P)$ arbitrarily, and let
\[
        \SqBd:\mathbb Q^{\mathcal S(P)}\to Z_1(G_A(P);\mathbb Q)
\]
be the rational square-boundary matrix.  The represented linear matroid is denoted
\[
        M_{\mathrm{belt}}(P).
\]
Changing orientations only rescales rows or columns by $-1$, so the matroid is independent of these choices.

\begin{proposition}[Matroid rank and nullity]\label{prop:matroid-rank}
The normal-fan square-boundary matroid satisfies
\[
        \rank M_{\mathrm{belt}}(P)=A(P)-V(P)+1,
\]
\[
        \operatorname{nullity}M_{\mathrm{belt}}(P)=\delta(P).
\]
In particular,
\[
        \delta(P)=0
        \quad\Longleftrightarrow\quad
        M_{\mathrm{belt}}(P)\text{ is independent}.
\]
\end{proposition}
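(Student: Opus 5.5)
The plan is to read both quantities directly off the rational square-boundary matrix $\SqBd$, using the linear algebra of the represented matroid together with the rank computations already made. For a linear matroid represented by the columns of a matrix, the rank equals the dimension of the column span, and the nullity equals the number of elements minus the rank, i.e. the dimension of the space of linear dependencies among the columns. The ground set here is $\mathcal S(P)$, of size $B(P)$. Rescaling columns or rows by $\pm1$ changes neither quantity.

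\textbf{Rank.} The column span of $\SqBd$ over $\mathbb Q$ is $L_{\mathrm{sq}}(P)\otimes\mathbb Q$. By Theorem~\ref{thm:parity-lattice}, $L_{\mathrm{sq}}(P)=\ker\ell$, which has finite index (namely $2$) in $Z_1(G_A(P);\mathbb Z)$. Hence its rational span is all of $Z_1(G_A(P);\mathbb Q)$. By connectedness of $G_A(P)$ (Lemma~\ref{lem:connected}), this space has dimension $A(P)-V(P)+1$. Alternatively, Theorem~\ref{thm:snf} already records that $\rank\SqBd=r=A(P)-V(P)+1$, and one may simply cite it.

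\textbf{Nullity.} The nullity is $B(P)-r=B(P)-A(P)+V(P)-1=\delta(P)$. Equivalently, it is $\dim\ker\SqBd=\dim\operatorname{Belt}_{\mathbb Q}(P)=\beta_2(X(P);\mathbb Q)$, which equals $\delta(P)$ by Theorem~\ref{thm:main-strong}. The final equivalence follows because a matroid is independent exactly when its nullity is zero; this restates Corollary~\ref{cor:independence-zero-defect}.

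\textbf{Main obstacle.} There is no real difficulty once earlier results are assumed. The only point requiring care is that the rank counts the full cycle space and not merely a sublattice of it. This is exactly where the finite index statement from the parity lattice theorem (or, equivalently, $H_1(X(P);\mathbb Q)=0$) enters.
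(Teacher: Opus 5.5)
Your proposal is correct and follows the paper's own proof. The rank is the rational rank of $\SqBd$, which the paper gets by citing Theorem~\ref{thm:snf}; your finite-index argument via Theorem~\ref{thm:parity-lattice} is the same fact one step earlier. The nullity is then $B(P)-r=\delta(P)$, and independence is equivalent to the nullity vanishing.
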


\begin{proof}
By Theorem~\ref{thm:snf}, the rational rank of $\SqBd$ equals the rank of the cycle space of $G_A(P)$, namely
\[
        A(P)-V(P)+1.
\]
The number of columns is $B(P)$, hence the nullity is
\[
        B(P)-A(P)+V(P)-1=\delta(P).
\]
The final equivalence follows because independence of the represented matroid is exactly vanishing of the column-dependency space.
\end{proof}

Membership in the row and column sets of $\SqBd$ can be read from closed normal cells:
\[
        \{v,w\}\in E(G_A(P))
        \quad\Longleftrightarrow\quad
        N^+(v)\cap N^-(w)\ne\varnothing,
\]
\[
        \{e,f\}\in\mathcal S(P)
        \quad\Longleftrightarrow\quad
        N^+(e)\cap N^-(f)\ne\varnothing.
\]
The incidence of a row in a column is determined by vertex-edge containment.  Thus $M_{\mathrm{belt}}(P)$ is a normal-fan and face-lattice invariant.

\subsection{The circuit criterion}

\begin{definition}[Primitive rational belt]
A nonzero rational antipodal belt is \emph{primitive} if its support is inclusion-minimal among nonzero rational belts.  Equivalently, primitive rational belts are the circuits of $M_{\mathrm{belt}}(P)$.
\end{definition}

\begin{proposition}[Algorithmic circuit criterion]\label{prop:circuit-criterion}
A subset $\mathcal T\subseteq\mathcal S(P)$ supports a primitive rational belt if and only if
\[
        \rank(\SqBd|_{\mathcal T})<|\mathcal T|
\]
and for every nonempty proper subset $\mathcal T'\subsetneq \mathcal T$,
\[
        \rank(\SqBd|_{\mathcal T'})=|\mathcal T'|.
\]
\end{proposition}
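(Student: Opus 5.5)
The statement is the standard characterization of circuits in a represented linear matroid, applied to $M_{\mathrm{belt}}(P)$. I will translate ``supports a primitive rational belt'' into linear algebra on the columns of $\SqBd$, and then verify both directions directly from the definitions. The only input I use is that a rational belt supported in $\mathcal T$ is exactly a vector in $\ker(\SqBd|_{\mathcal T})$. This holds because $\partial Z=\SqBd(\lambda)$ for $Z=\sum\lambda_s s$, and $\operatorname{Belt}_{\mathbb Q}(P)=\ker\SqBd$ since $X(P)$ is two-dimensional. I also use rank--nullity: $\ker(\SqBd|_{\mathcal T})\neq0$ if and only if $\rank(\SqBd|_{\mathcal T})<|\mathcal T|$.

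\emph{First direction.} Suppose $\mathcal T$ is the support of a primitive belt $Z$. Then $Z$ is a nonzero element of $\ker(\SqBd|_{\mathcal T})$, so the rank inequality holds. Now let $\mathcal T'\subsetneq\mathcal T$ be nonempty and suppose, for contradiction, that $\rank(\SqBd|_{\mathcal T'})<|\mathcal T'|$. Then there is a nonzero belt $Z'$ with support contained in $\mathcal T'$, which is strictly smaller than the support of $Z$. This contradicts inclusion-minimality. Hence every nonempty proper subset has full column rank.

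\emph{Second direction.} Suppose $\mathcal T$ satisfies both conditions. The first condition gives a nonzero $\lambda\in\ker(\SqBd|_{\mathcal T})$. Its support $\mathcal T''$ is nonempty and contained in $\mathcal T$. If $\mathcal T''\subsetneq\mathcal T$, then $\lambda$ restricted to $\mathcal T''$ is a nonzero kernel vector of $\SqBd|_{\mathcal T''}$, contradicting full rank there. So the support of $\lambda$ is exactly $\mathcal T$.

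It remains to show that the belt $\lambda$ is primitive. Any nonzero belt whose support is contained in $\mathcal T$ must have support equal to $\mathcal T$, by the same argument as above. Hence no nonzero belt has support strictly inside $\mathcal T$, so $\lambda$ is inclusion-minimal and $\mathcal T$ supports a primitive rational belt. Equivalently, $\mathcal T$ is a circuit of $M_{\mathrm{belt}}(P)$.

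\emph{Obstacle.} There is no real obstacle. The only point needing care is the step ``the support of the kernel vector is all of $\mathcal T$,'' which is exactly where the proper-subset rank condition is used. I would also remark that orientation choices do not matter, since they only rescale columns by $-1$.
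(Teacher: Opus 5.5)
Your proposal is correct and follows the paper's route. The paper's proof is the single observation that the statement is the definition of a circuit in the linear matroid represented by the columns of $\SqBd$; you have unpacked that identification explicitly, matching minimal dependent column sets with inclusion-minimal supports of nonzero kernel vectors, and both directions are sound. (Small aside: $\operatorname{Belt}_{\mathbb Q}(P)=\ker\SqBd$ holds by definition of a belt, not because $X(P)$ is two-dimensional, but this does not affect the argument.)
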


\begin{proof}
This is exactly the definition of a circuit in the linear matroid represented by the columns of $\SqBd$.
\end{proof}

\subsection{Rotating calipers for polygons}

Let $Q$ be a convex $n$-gon with no two adjacent edges collinear.  Let $p(Q)$ be the number of unordered pairs of parallel edges of $Q$.

Let $a_{VV}(Q)$, $a_{VE}(Q)$, and $a_{EE}(Q)$ denote the numbers of antipodal vertex--vertex, vertex--edge, and edge--edge pairs of $Q$, respectively.  Vertex--edge here means that a vertex and an edge lie on two opposite parallel support lines; it does not mean incidence.

\begin{lemma}[Rotating-calipers counts]\label{lem:rotating-calipers}
For every convex $n$-gon $Q$,
\[
        a_{VV}(Q)=n+p(Q),
\]
\[
        a_{VE}(Q)=n+2p(Q),
\]
\[
        a_{EE}(Q)=p(Q).
\]
\end{lemma}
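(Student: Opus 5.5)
We plan to prove the three counts by working on the circle of directions $S^1$, which is the polygonal analogue of the normal-fan overlay used for Theorem~\ref{thm:generic-d}. The normal fan of $Q$ cuts $S^1$ into $n$ normal points, namely the outward edge normals, and $n$ open arcs, namely the exact normal cells of the vertices. We then form the common refinement of this fan with its antipodal reflection. Each direction $u$ has a unique exact support pair $(F_Q(u),F_Q(-u))$.

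The first step is to count the cells of the common refinement. A normal point $n_e$ is antipodal to another normal point exactly when $e$ has a parallel edge. Let $p=p(Q)$. The $2n$ points $\pm n_e$ then give $2n-2p$ distinct points on $S^1$, because each parallel pair collapses two coincidences. Hence the refinement has $2n-2p$ vertices and $2n-2p$ open arcs.

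The second step is to label these cells by ordered exact pairs and pass to unordered pairs, as in the proof of Theorem~\ref{thm:exact-local-defect}. The antipodal map acts freely on the cells and swaps the order of the pair, so unordered exact pairs correspond to cells of $S^1/\pm=\mathbb{RP}^1$. There are $n-p$ vertex cells and $n-p$ arc cells. An open arc is an exact vertex--vertex pair. A point cell is either an exact edge--edge pair, which happens precisely for the $p$ parallel pairs, or an exact edge--vertex pair, of which there are $n-2p$.

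The third step, and the main one, is to convert exact counts into the non-exact antipodality counts in the statement. Antipodality is defined by closed normal cells: two faces are antipodal if $N^+(F)\cap -N^+(G)\neq\varnothing$. We would use the witness-set method from the proof of Theorem~\ref{thm:exact-local-defect}, which is Euler integration on $\mathbb{RP}^1$, or simply count directly.

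1. For a vertex--vertex pair $\{x,y\}$, the closed witness set is a nonempty arc or a point. Two vertices are antipodal exactly if some cell of the refinement contains both in its closed support pair. The arc cells give $n-p$ pairs. Each edge--vertex point cell adds nothing new, because it lies on the boundary of the adjacent arcs. Each edge--edge point cell with edges $e=[a,b]$ and $f=[c,d]$ gives four vertex pairs. Two of these, the pairs on the "same side", already arise from the adjacent arcs. The other two, the crossing diagonals, are new. So we get $a_{VV}=(n-p)+2p=n+p$. This must be checked by examining which vertex pairs occur as labels of the two arcs adjacent to the point.
2. For vertex--edge pairs, each edge--vertex point cell gives one pair, and each edge--edge point cell gives four vertex--edge pairs, namely each endpoint of each edge against the other edge. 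We need to check that no other incidences arise. So $a_{VE}=(n-2p)+4p=n+2p$.
3. For edge--edge pairs, only exact point cells contribute, so $a_{EE}=p$.

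The main obstacle is the bookkeeping at the edge--edge cells in step 1: showing that exactly two of the four vertex pairs there are new and that nothing is double-counted. We would handle this by an explicit local picture of two parallel edges. As a check, the resulting counts should satisfy the one-dimensional Euler relation $a_{VV}-a_{VE}+a_{EE}$ computed by Euler integration, and this serves as a consistency test.
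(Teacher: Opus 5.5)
Your plan is the paper's argument. You count the $n-p$ event directions on $\mathbb{RP}^1$ ($p$ edge--edge and $n-2p$ vertex--edge), take $n-p$ vertex--vertex pairs from the open intervals, and add four vertex--edge pairs and two new vertex--vertex pairs at each edge--edge event. The three counts are right. However, your identification of \emph{which} two endpoint pairs are new at an edge--edge event is backwards. The paper only says ``two new pairs'' without naming them.

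To see this, put $n_e=(0,1)$, $e=[a,b]$ on the upper support line and $f=[c,d]$ on the lower one, with $a,c$ the left endpoints.
\begin{itemize}[leftmargin=2em]
\item For $u=(\varepsilon,1)$ with small $\varepsilon>0$ you get $F_Q(u)=b$ and $F_Q(-u)=c$.
\item For $u=(-\varepsilon,1)$ you get $F_Q(u)=a$ and $F_Q(-u)=d$.
\end{itemize}
So the two adjacent open arcs realize the crossing diagonals $\{b,c\},\{a,d\}$ of the trapezoid $\operatorname{conv}(e\cup f)$. The new pairs are the same-side pairs $\{a,c\},\{b,d\}$, its legs. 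Indeed $N^+(a)\cap -N^+(c)=\{n_e\}$. In the rectangle $[0,2]\times[0,1]$, for example, the adjacent vertices $(0,0),(0,1)$ are antipodal only via the vertical direction. The count $2$ per event survives, so you only need to swap the roles.

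The ``nothing is double-counted'' step, which you flag as the main obstacle, can be settled cleanly. For distinct vertices $x,y$, the sets $N^+(x)$ and $-N^+(y)$ are closed arcs of length $<\pi$. Their intersection is therefore connected and maps injectively to a point or closed arc in $\mathbb{RP}^1$. Across every event at least one of the two support faces changes, so such a witness set contains at most one open interval. Hence the $n-p$ intervals give distinct pairs, and every remaining pair is witnessed at a single event point.

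At a vertex--edge event, the opposite vertex $F_Q(-n_e)$ persists on both sides and each endpoint of $e$ is the support vertex on one side. That, not merely ``lying on the boundary of the adjacent arcs,'' is why such events add nothing; edge--edge points also lie on such boundaries.

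Your Euler-integration remark in fact gives a complete proof, not just a consistency check. The integrand $v(F)v(G)-\bigl(v(F)e(G)+e(F)v(G)\bigr)+e(F)e(G)$ equals $1$ on every stratum. Since $\chi(\mathbb{RP}^1)=0$, this gives $a_{VV}-a_{VE}+a_{EE}=0$. The counts $a_{EE}=p$ and $a_{VE}=n+2p$ are immediate, because each edge $e$ pairs exactly with the vertices of $F_Q(-n_e)$. Together these yield $a_{VV}=n+p$ with no local bookkeeping at all.
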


\begin{proof}
Consider the projective normal circle $\mathbb{RP}^1=S^1/\{\pm1\}$.  A point of $\mathbb{RP}^1$ represents a pair of parallel support lines of $Q$.  As the normal direction varies, the support faces change exactly when one of the two support lines contains an edge of $Q$.

Each edge of $Q$ gives one event direction in $\mathbb{RP}^1$.  If two edges are parallel, they give the same event direction.  In a convex polygon an unoriented edge direction occurs at most twice.  Hence the number of event directions is
\[
        n-p(Q).
\]
Among these, $p(Q)$ are edge--edge events.  Since each such event accounts for two edges, the remaining event directions are vertex--edge events, and their number is
\[
        n-2p(Q).
\]

There are $n-p(Q)$ open intervals between consecutive event directions.  On each interval, both opposite support faces are vertices, giving one vertex--vertex antipodal pair.

At an edge--edge event, suppose the two opposite support edges are
\[
        e=ab,
        \qquad
        f=xy.
\]
This event contributes one edge--edge pair, four vertex--edge pairs
\[
        a-f,
        \quad b-f,
        \quad x-e,
        \quad y-e,
\]
and two new vertex--vertex pairs among the four endpoint pairs.  The other two endpoint pairs are already counted by the two adjacent open intervals.

Therefore
\[
        a_{EE}(Q)=p(Q),
\]
\[
        a_{VE}(Q)=(n-2p(Q))+4p(Q)=n+2p(Q),
\]
\[
        a_{VV}(Q)=(n-p(Q))+2p(Q)=n+p(Q).
\]
\end{proof}

\subsection{Pyramids over polygons}

Throughout the polygonal family results, convex polygons are assumed to have no two adjacent edges collinear.

Let $Q$ be a convex $n$-gon and let
\[
        P=\operatorname{Pyr}(Q)
\]
be any pyramid over $Q$, with apex $a$ outside the base plane.  We regard $Q$ as lying in an affine base plane and write the apex in the form $a=(o,h)$ after choosing an affine coordinate $z$ transverse to that plane, with $h\ne0$.

\begin{lemma}[Antipodal correspondences in a pyramid]\label{lem:pyramid-correspondences}
Let $P=\operatorname{Pyr}(Q)$ with apex $a$.  Then the following hold, with all support-face conditions understood in the closed sense.

\begin{enumerate}[label=(\alph*),leftmargin=2em]
\item The apex $a$ is antipodal to every base vertex $v$.

\item Two base vertices $v,w$ are antipodal in $P$ if and only if they are antipodal in $Q$.

\item Two base edges $e,f$ have antipodal midpoints in $P$ if and only if they have antipodal midpoints in $Q$.

\item A lateral edge $av$ and a base edge $e$ have antipodal midpoints in $P$ if and only if $v$ and $e$ form a vertex--edge antipodal pair in $Q$, i.e.\ if and only if there exists an affine functional $\alpha$ on the base plane such that
\[
        v\in F_Q(\alpha),
        \qquad
        e\subseteq F_Q(-\alpha).
\]

\item No two lateral edges have antipodal midpoints.
\end{enumerate}
\end{lemma}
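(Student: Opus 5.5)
Throughout I will work with linear functionals on \(\mathbb R^3\) written as \(\phi(x,z)=\alpha(x)+tz\), where \(\alpha\) is a linear functional on the base plane and \(t\in\mathbb R\). Antipodality of two faces means there is a nonzero \(\phi\) whose maximum face contains one and whose minimum face contains the other. The key observation is simple. Restricted to \(Q\), \(\phi\) equals \(\alpha\) plus the constant \(t\cdot 0\). At the apex \(\phi\) equals \(\alpha(o)+th\). So \(\max_P\phi=\max(\max_Q\alpha,\ \alpha(o)+th)\), and likewise for the minimum. Because \(t\) is free and \(h\ne0\), the value at the apex can be set to any real number, independently of \(\alpha\). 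Each item is then proved by choosing \(t\) suitably in one direction and restricting to \(\alpha\) in the other.

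Item (a). Take \(\alpha\) with \(v\in F_Q(\alpha)\), which exists for every vertex. Choose \(t\) so that \(\alpha(o)+th<\min_Q\alpha\). Then \(v\) is a maximizer of \(\phi\) and \(a\) is a minimizer.

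Item (b). Suppose \(v,w\) are antipodal in \(Q\) via a nonzero \(\alpha\). Choose \(t\) so that the apex value lies strictly between \(\min_Q\alpha\) and \(\max_Q\alpha\), which is possible since \(\alpha\ne0\). Then \(F_P(\phi)\) and \(F_P(-\phi)\) coincide with the corresponding support faces of \(Q\).

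For the converse, suppose \(v\in F_P(\phi)\) and \(w\in F_P(-\phi)\). Then \(\phi\) restricted to \(Q\), namely \(\alpha\), attains its maximum over \(Q\) at \(v\) and its minimum at \(w\). Since \(v\ne w\) are distinct points of \(Q\), \(\alpha\) is nonconstant on \(Q\), hence nonzero. So \(v,w\) are antipodal in \(Q\).

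Item (c). The same argument works, with vertices replaced by edges. An edge lies in a support face of \(P\) exactly when \(\phi\) is maximized on it. The nonconstancy of \(\alpha\) follows because \(e\) and \(f\) are disjoint, as noted in the definition of \(X(P)\).

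Item (d). Suppose \(av\subseteq F_P(\phi)\) and \(e\subseteq F_P(-\phi)\). Then \(\phi(a)=\phi(v)=\max_P\phi\), and \(\alpha\) is maximized on \(Q\) at \(v\) and minimized along \(e\). Here \(\alpha\ne0\) is needed. If \(\alpha\) were zero, \(\phi\) would be constant on \(Q\), while \(v\) and \(e\) would have to sit at different levels, a contradiction.

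Conversely, given \(\alpha\) with \(v\in F_Q(\alpha)\) and \(e\subseteq F_Q(-\alpha)\), choose \(t\) with \(\alpha(o)+th=\alpha(v)=\max_Q\alpha\). Then \(a\) and \(v\) both lie on the top plane, so the whole edge \(av\) does, and \(e\) lies on the bottom plane.

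Item (e). Two lateral edges \(av\) and \(aw\) share the endpoint \(a\), so they are not disjoint. But edges with antipodal midpoints must be disjoint, as shown in the definition of \(X(P)\). Hence no two lateral edges have antipodal midpoints.

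The only delicate point is the nondegeneracy bookkeeping. In the converse directions one must check that the restricted functional \(\alpha\) is nonzero. One must also check that midpoint antipodality is equivalent to both closed edges lying in the opposite support faces. This holds because a support plane through an interior point of an edge contains the whole edge.
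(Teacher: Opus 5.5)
Your proposal is correct and follows essentially the same route as the paper: write the functional as $\alpha(x)+tz$, restrict to the base plane for the $P\Rightarrow Q$ directions, tune $t$ to place the apex value for the $Q\Rightarrow P$ directions, and get (e) from the shared apex. Your explicit check that the restricted $\alpha$ is nonzero is a small addition the paper omits; note, though, that in (b) and (c) the reason is not that the two items are distinct or disjoint, but that they lie on the two distinct opposite support planes of the full-dimensional $P$, so that $\max_P\phi>\min_P\phi$, which is the justification you give in (d).
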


\begin{proof}
Every affine functional on the ambient three-space may be written as
\[
        L(x,z)=\alpha(x)+\lambda z,
\]
where $\alpha$ is an affine functional on the base plane.

For (a), choose an affine functional $\alpha$ on the base plane minimized at $v$.  If $h>0$, choose $\lambda$ so large that
\[
        \alpha(o)+\lambda h>\max_Q\alpha;
\]
if $h<0$, choose $\lambda$ with the opposite sign.  Then the apex is the unique upper extremum of $L$, while the minimum support face is the minimum face of $\alpha$ in the base and contains $v$.  Hence $a$ and $v$ are antipodal.

For (b), suppose first that two base vertices $v,w$ are antipodal in $P$.  Restrict a witnessing affine functional $L$ to the base plane.  Since both points lie in the base plane, this restriction witnesses that $v$ and $w$ are antipodal in $Q$.  Conversely, if $v$ and $w$ are antipodal in $Q$, choose an affine functional $\alpha$ on the base plane with
\[
        v\in F_Q(\alpha),
        \qquad
        w\in F_Q(-\alpha).
\]
Choose $\lambda$ so that
\[
        \min_Q\alpha\le \alpha(o)+\lambda h\le \max_Q\alpha.
\]
Such a value exists because $h\ne0$.  The apex then lies between the two supporting planes, and the same two base vertices are antipodal in $P$.

The proof of (c) is identical, replacing base vertices by base edges.  A witnessing functional in $P$ restricts to one in the base plane, and conversely a base-plane witness may be extended by choosing $\lambda$ so that the displayed interval condition holds.  Closed support faces cause no difficulty if the apex lies on one of the two support planes.

For (d), suppose first that the lateral edge $av$ and the base edge $e$ are antipodal in $P$.  After replacing $L$ by $-L$ if necessary, assume that $av$ lies in a maximum support face and $e$ lies in a minimum support face.  Since $L$ is constant on $av$, its restriction $\alpha$ satisfies
\[
        v\in F_Q(\alpha),
        \qquad
        e\subseteq F_Q(-\alpha).
\]
Thus $v$ and $e$ form a vertex--edge antipodal pair in $Q$.  Conversely, if such an $\alpha$ exists, choose $\lambda$ so that
\[
        L(a)=L(v)=\max_Q\alpha.
\]
Then the lateral edge $av$ is contained in a support plane of $P$, while $e$ is contained in an opposite support plane.  Hence their midpoints are antipodal.

For (e), any two lateral edges share the apex $a$.  If two such edges were contained in opposite supporting planes, then $a$ would lie on both of two distinct parallel supporting planes, which is impossible for a full-dimensional polyhedron.
\end{proof}

\begin{theorem}[Pyramid defect formula]\label{thm:pyramid-defect}
For every convex polygon $Q$,
\[
        \delta(\operatorname{Pyr}(Q))=2p(Q).
\]
Consequently,
\[
        \operatorname{Pyr}(Q)\text{ has zero defect}
        \quad\Longleftrightarrow\quad
        Q\text{ has no pair of parallel edges}.
\]
\end{theorem}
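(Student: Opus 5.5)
Two routes are available: the direct count \(V-1-A+B\) using Lemma~\ref{lem:pyramid-correspondences} and Lemma~\ref{lem:rotating-calipers}, or the facet-opposite formula (Corollary~\ref{cor:facet-opposite-formula}). I would carry out the direct count as the main argument and use the facet-opposite formula as a check.

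For the direct count, take \(V(P)=n+1\). Lemma~\ref{lem:pyramid-correspondences}(a),(b) give
\[
A(P)=n+a_{VV}(Q)=2n+p(Q).
\]
For \(B(P)\), parts (c), (d), (e) say that antipodal edge pairs of \(P\) are either two base edges or one lateral and one base edge. There are \(a_{EE}(Q)=p(Q)\) pairs of the first kind. Pairs \((av,e)\) of the second kind correspond bijectively to vertex--edge antipodal pairs \((v,e)\) of \(Q\), and there are \(a_{VE}(Q)=n+2p(Q)\) of these. Hence \(B(P)=n+3p(Q)\), and
\[
\delta(P)=(n+1)-1-(2n+p)+(n+3p)=2p(Q).
\]

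The one point to check is that the bijection in (d) is exactly the vertex--edge count of Lemma~\ref{lem:rotating-calipers}, which excludes incidence. An incident pair \(v\in e\) cannot occur, since \(v\) would lie on two opposite support lines of \(Q\). So every such pair has \(v\notin e\), and all \(n+2p\) pairs are genuine and distinct. The map \((v,e)\mapsto\{av,e\}\) is injective, so no double counting arises.

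As a cross-check, apply Corollary~\ref{cor:facet-opposite-formula}. The facets of \(P\) are the base and the \(n\) lateral triangles. The base normal is opposite the apex vertex, so the base contributes nothing. For the lateral facet over base edge \(e\), the opposite support face lies in the base plane only if its normal is vertical, which is impossible. So the opposite face is a base vertex or a base edge \(f\) parallel to \(e\): tilting the lateral plane's normal makes the opposite face the minimum face of the projected functional on \(Q\). No lateral facet is opposite another facet, since the opposite face of a lateral facet is never a facet. Hence
\[
\delta(P)=\#\{e: Q \text{ has an edge parallel to } e\}=2p(Q).
\]
Each parallel pair contributes two edges, and an edge has at most one parallel partner.

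The main obstacle in this cross-check is the projection claim that the lateral facet over \(e\) is opposite the edge of \(Q\) parallel to \(e\). The normal of that facet projects onto the base plane to a normal of \(e\) pointing outward from \(Q\). The face of \(P\) minimizing a functional that is neither vertical nor maximized at the apex is the minimum face of the restricted functional on \(Q\), and this is the edge parallel to \(e\) exactly when one exists. The zero-defect equivalence then follows immediately from \(\delta(P)=2p(Q)\).
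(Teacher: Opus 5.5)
Your main argument is the paper's own proof. It takes \(V=n+1\), \(A=n+a_{VV}(Q)=2n+p(Q)\) and \(B=a_{VE}(Q)+a_{EE}(Q)=n+3p(Q)\) from Lemma~\ref{lem:pyramid-correspondences} and Lemma~\ref{lem:rotating-calipers}, giving \(\delta=2p(Q)\); your check that incident pairs \(v\in e\) cannot occur correctly confirms the bijection in part (d). The facet-opposite cross-check is a valid independent confirmation not in the paper, apart from two wording slips: only the opposite face being the \emph{whole base} would force a vertical normal (not any face lying in the base plane), and you need the lateral functional to be not \emph{minimized} at the apex, which holds because it is maximized there and is nonconstant.
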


\begin{proof}
We have $V(P)=n+1$.  By Lemma~\ref{lem:pyramid-correspondences}, the antipodal vertex pairs consist of the $n$ apex--base pairs and the base--base antipodal pairs of $Q$.  Therefore
\[
        A(P)=n+a_{VV}(Q)=n+(n+p(Q))=2n+p(Q).
\]
Similarly, the antipodal edge-midpoint pairs consist of the base--base edge pairs counted by $a_{EE}(Q)$ and the lateral--base pairs counted by $a_{VE}(Q)$.  There are no lateral--lateral pairs.  Hence
\[
        B(P)=a_{VE}(Q)+a_{EE}(Q)=(n+2p(Q))+p(Q)=n+3p(Q).
\]
Thus
\[
        \delta(P)=V(P)-1-A(P)+B(P)
        =(n+1)-1-(2n+p(Q))+(n+3p(Q))=2p(Q).
\]
\end{proof}

\begin{corollary}[Regular pyramids]\label{cor:regular-pyramids}
For a regular $n$-gonal pyramid,
\[
\delta(P)=
\begin{cases}
0, & n\text{ odd},\\
n, & n\text{ even}.
\end{cases}
\]
\end{corollary}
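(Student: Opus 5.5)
This is a direct specialization of Theorem~\ref{thm:pyramid-defect}, so the plan is to reduce everything to computing \(p(Q)\) when \(Q\) is a regular \(n\)-gon. Once we know
\[
p(Q)=\begin{cases}0,& n\text{ odd},\\ n/2,& n\text{ even},\end{cases}
\]
the pyramid defect formula \(\delta(\operatorname{Pyr}(Q))=2p(Q)\) gives \(0\) and \(n\) respectively. That formula holds for every pyramid over \(Q\), whatever the position of the apex. In particular the regularity of the pyramid, meaning the apex sits over the center, plays no role beyond making the base regular.

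To compute \(p(Q)\), we record the edge directions of a regular \(n\)-gon. The outward edge normals are the \(n\) angles \(2\pi k/n+\theta_0\), with \(k=0,\dots,n-1\). Two edges are parallel exactly when their outward normals are opposite, that is, when they differ by \(\pi\). The unoriented directions coincide only in that case, because distinct edges of a convex polygon have distinct outward normals.

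The parallel-edge count then follows case by case.
\begin{itemize}
\item If \(n\) is even, rotation by \(\pi\) is the rotation by \(n/2\) steps. It maps the set of normals to itself without fixed points, so the edges split into exactly \(n/2\) parallel pairs.
\item If \(n\) is odd, we would need \(2\pi j/n=\pi\) for some integer \(j\), i.e.\ \(2j=n\). This is impossible, so there are no parallel pairs.
\end{itemize}
As a cross-check, Lemma~\ref{lem:rotating-calipers} gives \(a_{EE}(Q)=p(Q)\), which matches the regular hexagon (three opposite edge pairs) and the triangle (none).

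There is no real obstacle here. The only point needing care is to confirm that Theorem~\ref{thm:pyramid-defect} places no hypothesis on the apex that could fail, and it does not. The case \(n=3\) is also consistent: the pyramid is a tetrahedron, for which Example~\ref{ex:tetra} gives defect \(0\).
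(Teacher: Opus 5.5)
Your proposal is correct and follows essentially the same route as the paper: apply Theorem~\ref{thm:pyramid-defect} and count parallel edge pairs of the regular \(n\)-gon, which gives none for odd \(n\) and \(n/2\) for even \(n\). Your argument via outward normals differing by \(\pi\) simply spells out the count that the paper asserts in one line.
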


\begin{proof}
By Theorem~\ref{thm:pyramid-defect},
\[
        \delta(P)=2p(Q),
\]
where $Q$ is the regular $n$-gon forming the base.  A regular $n$-gon has no pair of parallel edges when $n$ is odd, and has exactly $n/2$ pairs of parallel opposite edges when $n$ is even.  Hence
\[
        \delta(P)=0
        \quad\text{if }n\text{ is odd},
        \qquad
        \delta(P)=2\cdot \frac n2=n
        \quad\text{if }n\text{ is even}.
\]
\end{proof}

\begin{example}[The same face lattice with different pyramid defects]\label{ex:quadrilateral-pyramids}
Consider the two convex quadrilaterals
\[
\begin{aligned}
Q_0&=\operatorname{conv}\{(0,0),(3,0),(4,2),(0,3)\},\\
Q_1&=\operatorname{conv}\{(0,0),(3,0),(2,2),(0,2)\}.
\end{aligned}
\]
The polygon $Q_0$ has no parallel pair of sides, whereas $Q_1$ has exactly
one parallel pair, namely its two horizontal sides.  Therefore any pyramids
over $Q_0$ and $Q_1$ are combinatorially equivalent quadrilateral pyramids,
but Theorem~\ref{thm:pyramid-defect} gives
\[
        \delta(\operatorname{Pyr}(Q_0))=0,
        \qquad
        \delta(\operatorname{Pyr}(Q_1))=2.
\]
More explicitly, the two count triples are
\[
        (V,A,B)=(5,8,4)
        \qquad\text{and}\qquad
        (V,A,B)=(5,9,7).
\]
Hence the antipodal defect is not determined by the face lattice; it detects
metric information carried by the normal fan.
\end{example}

\subsection{Primitive belts of pyramids}

Fix a pair of parallel opposite base edges
\[
        e=uv,
        \qquad
        f=xy,
\]
and let $a$ be the apex.  The associated local block consists of five square cells:
\[
        B=S_{e,f},
        \qquad
        U=S_{au,f},
        \qquad
        V=S_{av,f},
        \qquad
        X=S_{ax,e},
        \qquad
        Y=S_{ay,e}.
\]

\begin{figure}[ht]
\centering
\begin{tikzpicture}[x={(1.0cm,0.15cm)},y={(0.55cm,0.30cm)},z={(0cm,1.0cm)},scale=1.0,every node/.style={font=\small}]
  \coordinate (u) at (-2.3,0,0);
  \coordinate (v) at (-0.7,0,0);
  \coordinate (y) at (2.3,1.55,0);
  \coordinate (x) at (0.7,1.55,0);
  \coordinate (a) at (0.0,0.78,3.0);

  \fill[gray!10] (u)--(v)--(y)--(x)--cycle;
  \draw[thick] (u)--(v)--(y)--(x)--cycle;
  \draw[thick] (a)--(u) (a)--(v) (a)--(x) (a)--(y);
  \draw[thick,dashed] (u)--(x);

  \draw[very thick] (u)--(v);
  \draw[very thick] (x)--(y);

  \fill (u) circle (1.3pt); \fill (v) circle (1.3pt);
  \fill (x) circle (1.3pt); \fill (y) circle (1.3pt);
  \fill (a) circle (1.3pt);

  \node[below left] at (u) {$u$};
  \node[below] at (v) {$v$};
  \node[above left] at (x) {$x$};
  \node[above right] at (y) {$y$};
  \node[above] at (a) {$a$};
  \node[below] at ($0.5*(u)+0.5*(v)$) {$e$};
  \node[above] at ($0.5*(x)+0.5*(y)$) {$f$};
\end{tikzpicture}

\vspace{0.4em}

\[
\begin{array}{c|c}
\text{square cells} &
B=S_{e,f},\quad U=S_{au,f},\quad V=S_{av,f},\quad X=S_{ax,e},\quad Y=S_{ay,e}\\[2mm]
\text{primitive circuits} &
\{B,U,V\},\quad \{B,X,Y\},\quad \{U,V,X,Y\}
\end{array}
\]
\caption{A parallel pair of base edges in a pyramid produces a local five-square block.  The geometry is separated from the algebraic list of square cells and primitive circuits.}
\label{fig:pyramid-block}
\end{figure}

With coherent orientations,
\[
        \partial B+\partial U-\partial V=0,
\]
\[
        \partial B+\partial X-\partial Y=0.
\]
Set
\[
        Z_1=B+U-V,
        \qquad
        Z_2=B+X-Y.
\]

\begin{theorem}[Pyramid belt-space decomposition]\label{thm:pyramid-belt-decomp}
For every convex polygon $Q$,
\[
        \operatorname{Belt}_{\mathbb Q}(\operatorname{Pyr}(Q))
        \cong
        \bigoplus_{\{e,f\}\parallel}\mathbb Q^2,
\]
where the direct sum is over unordered pairs of parallel base edges of $Q$.
\end{theorem}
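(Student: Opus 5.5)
The plan is a dimension count followed by an explicit spanning family.  By Theorem~\ref{thm:main-strong}, $\operatorname{Belt}_{\mathbb Q}(\operatorname{Pyr}(Q))=H_2(X(P);\mathbb Q)$ has dimension $\delta(P)$, and Theorem~\ref{thm:pyramid-defect} gives $\delta(P)=2p(Q)$.  So it suffices to exhibit, for each unordered pair $\{e,f\}$ of parallel base edges, two belts $Z_1^{e,f},Z_2^{e,f}$ spanning a two-dimensional subspace, and to show that the $2p(Q)$ belts obtained this way are linearly independent.  The dimension count then forces them to span the whole belt space.  This gives the stated direct sum, with the summand for $\{e,f\}$ equal to $\mathbb Q Z_1^{e,f}\oplus\mathbb Q Z_2^{e,f}$.

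The first step is to check that the five squares $B,U,V,X,Y$ of the local block exist.  By Lemma~\ref{lem:pyramid-correspondences}(c), $S_{e,f}$ exists because parallel opposite edges of $Q$ are an edge--edge antipodal pair.  By part (d), $S_{au,f}$, $S_{av,f}$, $S_{ax,e}$ and $S_{ay,e}$ exist because $u,v$ are each antipodal to the edge $f$ in $Q$, and symmetrically $x,y$ to $e$.  Next I would verify the two boundary relations by listing boundary cycles:
\[
\partial B=(u,x)-(v,x)+(v,y)-(u,y),\qquad
\partial U=(a,x)-(u,x)+(u,y)-(a,y),
\]
and similarly for the others.  The apex edges $(a,x)$ and $(a,y)$ cancel in $\partial U-\partial V$, leaving $-\partial B$.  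This gives $\partial Z_1=0$, and $Z_2$ is handled the same way.  $Z_1$ and $Z_2$ are independent because $U$ appears in $Z_1$ but not in $Z_2$.

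The main point is independence across different parallel pairs.  In a convex polygon each edge is parallel to at most one other edge, so distinct parallel pairs $\{e,f\}$ and $\{e',f'\}$ involve disjoint edge sets.  The squares $U,V,X,Y$ of a block are lateral--base squares $S_{av',g}$ with $g\in\{e,f\}$.  They are therefore indexed by base edges belonging only to that block, which makes them pairwise distinct across blocks.  Now take a vanishing combination $\sum_{\{e,f\}}(\alpha_{e,f}Z_1^{e,f}+\beta_{e,f}Z_2^{e,f})=0$.  The coefficient of $U=S_{au,f}$ in it is $\alpha_{e,f}$, since this square occurs in no other block's belts.  Likewise the coefficient of $X=S_{ax,e}$ is $\beta_{e,f}$.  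Hence all coefficients vanish.

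The only delicate point I expect is confirming that the lateral squares really are distinct cells.  This holds because the square cells are indexed by edge pairs, and the base edges occurring in different blocks are different.  Combining the independent family of size $2p(Q)$ with $\dim\operatorname{Belt}_{\mathbb Q}=2p(Q)$ completes the proof.
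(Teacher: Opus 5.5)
Your proposal is correct and follows the paper's proof. It combines the dimension count $\dim_{\mathbb Q}\operatorname{Belt}_{\mathbb Q}=\delta(P)=2p(Q)$ from Theorems~\ref{thm:main-strong} and~\ref{thm:pyramid-defect} with the $2p(Q)$ local belts $Z_1,Z_2$; these are independent across blocks because distinct parallel pairs use disjoint base edges and hence disjoint square cells, and your boundary computations and the coefficient extraction on $U$ and $X$ just spell out details the paper states briefly.
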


\begin{proof}
Each parallel-edge block produces two independent belts $Z_1,Z_2$.  Distinct parallel-edge blocks do not share square cells, since a direction of a convex polygon occurs in at most one pair of opposite parallel edges.  Therefore the local belt spaces form a direct sum of dimension $2p(Q)$.

By Theorem~\ref{thm:pyramid-defect},
\[
        \delta(\operatorname{Pyr}(Q))=2p(Q).
\]
By the homological theorem,
\[
        \dim_{\mathbb Q}\operatorname{Belt}_{\mathbb Q}(\operatorname{Pyr}(Q))
        =\delta(\operatorname{Pyr}(Q)).
\]
Thus the local belts already span the full belt space, proving the decomposition.
\end{proof}

\begin{corollary}[Primitive circuits in a pyramid]\label{cor:pyramid-circuits}
In each parallel-edge block, the local belt space has exactly three primitive circuits:
\[
        \{B,U,V\},
        \qquad
        \{B,X,Y\},
        \qquad
        \{U,V,X,Y\}.
\]
Consequently, the primitive rational belts of a pyramid are exactly these local circuits over all parallel-edge blocks.
\end{corollary}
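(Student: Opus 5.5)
The statement has two parts: the local belt space has exactly the three listed circuits, and these local circuits exhaust the primitive rational belts of the pyramid. The plan is to work first inside a single block and then globalize using Theorem~\ref{thm:pyramid-belt-decomp}.

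\emph{Local part.} The local block spans the belts $Z_1=B+U-V$ and $Z_2=B+X-Y$. Since $U,V$ occur only in $Z_1$ and $X,Y$ only in $Z_2$, these are independent, so the local belt space is two-dimensional. Its general element is
\[
\lambda Z_1+\mu Z_2=(\lambda+\mu)B+\lambda U-\lambda V+\mu X-\mu Y .
\]
Nonzero supports come in exactly three shapes:
\begin{itemize}
\item $\lambda\ne0$, $\mu=0$ gives support $\{B,U,V\}$;
\item $\lambda=0$, $\mu\ne0$ gives support $\{B,X,Y\}$;
\item $\lambda=-\mu\ne0$ gives support $\{U,V,X,Y\}$, namely $Z_1-Z_2$;
\item $\lambda,\mu\ne0$ with $\lambda+\mu\ne0$ gives support all five cells.
\end{itemize}
The first three supports are pairwise incomparable and each is properly contained in the full support. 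Hence they are exactly the inclusion-minimal supports, i.e.\ the circuits of the local block.

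\emph{Global part.} By Theorem~\ref{thm:pyramid-belt-decomp}, every rational belt is a sum $\sum_\beta W_\beta$ of local belts $W_\beta$, one for each parallel-edge block $\beta$. As shown in that proof, distinct blocks use disjoint sets of square cells. It follows that the support of a belt is the disjoint union of the supports of its nonzero components $W_\beta$.

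Now take a primitive belt $Z$.
\begin{itemize}
\item If $Z$ had two nonzero components, dropping one of them would leave a nonzero belt with strictly smaller support, contradicting minimality. So $Z$ lies in a single block.
\item Within that block, minimality among all belts implies minimality among local belts. By the local analysis, the support of $Z$ is one of the three listed sets.
\end{itemize}
Conversely, each listed set $C$ is a global circuit. Any belt whose support lies in $C$ has all components outside the block equal to zero, since those components occupy disjoint cells. So such a belt is a local belt supported in $C$, and the local minimality established above applies.

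\emph{Main obstacle.} The one point needing care is the disjointness claim from the previous proof, namely that square cells are not shared across blocks. There is a second subtlety behind ``every belt decomposes blockwise'': every square cell appearing in a belt must belong to some block. This is exactly what Theorem~\ref{thm:pyramid-belt-decomp} supplies, so the plan relies on it rather than reproving it. Everything else is the finite support computation above.
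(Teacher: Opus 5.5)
Your proposal is correct and follows essentially the same route as the paper. You parametrize local belts as $\lambda Z_1+\mu Z_2$ and read off the three minimal supports from the cases $\mu=0$, $\lambda=0$, $\lambda+\mu=0$. You then globalize via the disjoint-support direct sum of Theorem~\ref{thm:pyramid-belt-decomp}, and you also state explicitly two points the paper leaves implicit: each listed local circuit stays minimal among all global belts, and belts supported in one block lie in $\operatorname{span}(Z_1,Z_2)$.
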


\begin{proof}
Every local belt has the form
\[
        \alpha Z_1+\beta Z_2
        =(
        \alpha+\beta)B+\alpha U-\alpha V+\beta X-\beta Y.
\]
Its support is minimal precisely in the three cases
\[
        \beta=0,
        \qquad
        \alpha=0,
        \qquad
        \alpha+\beta=0.
\]
These give the three supports listed above.  In the global direct sum, a belt with nonzero components in two different blocks cannot be primitive, because either nonzero component is itself a belt with strictly smaller support.  The global statement therefore follows from Theorem~\ref{thm:pyramid-belt-decomp}.
\end{proof}

\section{Conclusion and further questions}

The antipodal defect is therefore simultaneously a normal-fan quantity, a
second Betti number, and the nullity of an integral square-boundary map.  The
three descriptions serve different purposes: the local formula identifies
where degeneracy occurs, the square complex proves nonnegativity and retains
integral information, and the boundary lattice makes the torsion and
primitive dependencies explicit.  The pyramid formulas also show that the
defect is not determined by the face lattice alone.

Two problems appear particularly natural.

\begin{problem}[Primitive belt classification]
Give a geometric classification of the circuits of the normal-fan
square-boundary matroid \(M_{\mathrm{belt}}(P)\) for arbitrary convex
polyhedra.
\end{problem}

\begin{problem}[Higher-dimensional square complexes]
For \(d>3\), construct a finite combinatorial complex analogous to \(X(P)\)
whose homology detects a natural antipodal defect associated with the mixed
faces of \(P-P\).
\end{problem}

The polygonal suspension and wall-crossing phenomena suggested by the
examples form a separate direction and will be treated elsewhere.

\end{document}